\numberwithin{equation}{section}
\numberwithin{bbb}{section}
\newtheorem{th1}{{\bf Theorem}}[section]
\newtheorem{thm}[th1]{{\bf Theorem}}
\newtheorem{lem}[th1]{{\bf Lemma}}
\newtheorem{prop}[th1]{{\bf Proposition}}
\newtheorem{cor}[th1]{{\bf Corollary}}
\newtheorem{rem}[th1]{\bf Remark}
\newtheorem{defs}[th1]{\bf Definition}
\newcommand{\R}{\mathbb{R}}
\newcommand{\Z}{\mathbb{Z}}
\newcommand{\C}{\mathbb{C}}
\newcommand{\bS}{\mathbf{S}}
\newcommand{\bB}{\mathbf{B}}
\newcommand{\bI}{\mathbf{I}}
\newcommand{\bJ}{\mathbf{J}}
\newcommand{\bA}{\mathbf{A}}
\title[INLS]{Well-posedness and scattering for a 2D inhomogeneous NLS with Aharonov-Bohm magnetic potential}
\author[M. Majdoub and T. Saanouni]{Mohamed Majdoub and Tarek Saanouni}
\address[M. Majdoub]{Department of Mathematics, College of Science, Imam Abdulrahman Bin Faisal University, P. O. Box 1982, Dammam, Saudi Arabia.\newline Basic and Applied Scientific Research Center, Imam Abdulrahman Bin Faisal University, P.O. Box 1982, 31441, Dammam, Saudi Arabia.}
\email{\sl mmajdoub@iau.edu.sa}
\email{\sl med.majdoub@gmail.com}
\address[T. Saanouni]{ Departement of Mathematics, College of Science and Arts in Uglat Asugour, Qassim University, Buraydah, Kingdom of Saudi Arabia.}
\email{\sl t.saanouni@qu.edu.sa}
\subjclass[2020]{35Q55, 35P25, 35B44, 37L50, 81Q70.}
\keywords{Aharonov-Bohm magnetic potential, Scattering, Morawetz estimates, Virial identities, Gagliardo-Nirenberg inequality,  blow-up.}
\begin{document}
\begin{abstract}
We consider the magnetic nonlinear inhomogeneous Schr\"odinger equation
$$i\partial_t u -\left(-i\nabla+\frac{\alpha}{|x|^2}(-x_2,x_1)\right)^2 u =\pm|x|^{-\varrho}|u|^{p-1}u,\quad (t,x)\in \mathbb{R}\times \mathbb{R}^2,$$
where $\alpha\in\mathbb{R}\setminus\mathbb{Z},\,\varrho>0,\,p>1$. We prove a dichotomy of global existence and scattering versus blow-up of energy solutions under the ground state threshold in the inter-critical regime. The scattering is obtained by using the new approach of Dodson-Murphy (A new proof of scattering below the ground state for the 3D radial focusing cubic NLS, {Proc. Am. Math. Soc.} (2017)). This method is based on Tao's scattering criteria and Morawetz estimates. The novelty here is twice: we investigate the case $\varrho\alpha\neq0$ and we consider general energy initial data (not necessarily radially symmetric). The particular case $\alpha=0$, known as INLS, was widely investigated in the few recent years. Moreover, the particular case $\varrho=0$, which gives the homogeneous regime, was considered recently by X. Gao and C. Xu (Scattering theory for NLS with inverse-square potential in 2D, J. Math. Anal. Appl. (2020)), where the scattering is proved for spherically symmetric datum. In the radial framework, the above problem translate to the INLS with inverse square potential, which was widely investigated in space dimensions higher than three. The Hardy inequality $\||x|^{-1}f\|_{L^2(\mathbb{R}^N)}\leq \frac{2}{N-2}\|\nabla f\|_{L^2(\mathbb{R}^N)}$, which gives the norm equivalence 
$\|f\|_{H^1}\simeq \|f\|_{H^1}+\||x|^{-1}f\|_{L^2}$, fails in two space dimensions. Thus, it is not clear how to treat the NLS with inverse square potential in $H^1$ for two space dimensions. This article seems to be the first one dealing with the NLS with Aharonov-Bohm magnetic potential in the inhomogeneous regime, namely $\varrho\neq0$. 
\end{abstract}
\date{\today}
\maketitle

\section{Introduction and main results}
\label{S1}
We consider the initial value problem for the nonlinear inhomogeneous Schr\"odinger equation with Aharonov-Bohm potential
\begin{equation}\label{S}
\begin{cases}
i\partial_t u -\left(-i\nabla+\frac{\alpha}{|x|^2}(-x_2,x_1)\right)^2 u=\kappa|x|^{-\varrho}|u|^{p-1}u,   \\
u(t=0,x)= u_0(x),
\end{cases}
\end{equation}
where the wave function is $u:=u(t,x)\in\C$, $t\in\R$ denotes the time variable, $x:=(x_1,x_2)\in\R^2$ is the space variable, $p>1$ is the exponent of the source term, $\varrho>0$ gives a singular inhomogeneous term in the non-linearity, $\alpha\in\R\setminus\Z$ gives an Aharonov-Bohm potential and $\kappa=\pm1$. Moreover, $\kappa=1$ stands for the defocusing case while $\kappa=-1$ corresponds to the focusing regime.

The linear counterpart of \eqref{S} is the so-called electromagnetic Schr\"odinger equation
$$i\partial_t u =\Big(-i\nabla+\frac{{\bf A}(\frac x{|x|})}{|x|}\Big)^2u+\frac{{\bf a}(\frac x{|x|})}{|x|^2} u, $$
where ${\bf a}\in W^{1,\infty}(\bS^1,\R)$, $\bS^1$ denotes the unit
circle, and ${\bf A} \in W^{1,\infty}(\bS^1,\R^2)$ is a transversal vector field, namely
$${\bf A}(\omega)\cdot \omega=0,\quad\forall\,\, \omega\in\bS^1.$$

The magnetic potential (1-
form) $\frac{{\bf A}(\frac x{|x|})}{|x|}$ is connected with the associated magnetic tensor (2-form) ${\bf B}$ by the
exterior derivative ${\bf B} =d\frac{{\bf A}(\frac x{|x|})}{|x|}$. The magnetic tensor is compatible since the Maxwell equation
$d{\bf B} = 0$ means that ${\bf B}$ is a closed form. The Aharonov–Bohm potential gives a magnetic field
associated to thin solenoids: if the radius of the solenoid tends to zero while the flux through
it remains constant, then the particle is subject to a $\delta$-type magnetic field, that is so-called
Aharonov–Bohm field. The Aharonov–Bohm (AB) effect \cite{ab} lies at the interface of
gauge theories and quantum mechanics. In its best known form,
the AB effect predicts a shift in the interference pattern of the
quantum mechanical double-slit experiment which has a magnetic
flux carrying solenoid placed between the slits. If a solenoid with
a magnetic field ${\bf B} =\nabla\times{\bf A}$ (where ${\bf A}$ is the electromagnetic vector
potential) is placed between the two slits of a double-slit experiment the phase of the wave-function of the electrons going
through the slits and following some path to the screen \cite{dsecv,jzwld}. We refer to \cite{ahs} for a rigorous study of the magnetic field in quantum mechanics and to \cite{erd, PhysRev, IET} for other physical aspects with many references therein.\\

The vector potential
\begin{equation}
\label{A-alpha}
\bA_{\alpha}(x):=\frac{\alpha}{|x|^2}x^\perp,\quad x^\perp:=(-x_2,x_1)\in\R^2\setminus\{0\},
\end{equation}
generates the Aharonov–Bohm magnetic field. We know from \cite{AT98} that the operator 
\begin{equation}
\label{K-alpha}
\left(-i\nabla+\bA_{\alpha}\right)^2 \quad\mbox{on}\quad C_0^\infty(\R^2\setminus\{0\}) 
\end{equation}
is not essentially self-adjoint and admits  infinitely many self-adjoint extensions. Here we choose to work with the Friedrichs extension of \eqref{K-alpha} denoted by $\mathcal{K}_\alpha$. To be more precise, we introduce the  space $\dot H_\alpha^1$ as the closure of $C_0^\infty(\R^2\setminus\{0\})$ with respect to the norm

$$\big\|\left(\nabla+i\bA_{\alpha}\right)u\big\|_{L^2(\R^2)}.$$
We also define the associated inhomogeneous space $H^1_\alpha(\R^2):=L^2(\R^2)\cap \dot H_\alpha^1(\R^2)$. The quadratic form on $H^1_\alpha(\R^2)$ given by
$$
u\longmapsto \big\|\left(\nabla+i\bA_{\alpha}\right)u\big\|_{L^2(\R^2)}^2
$$
is closed and generates a unique non-negative self-adjoint operator $\mathcal{K}_\alpha$ on $L^2(\R^2)$ with domain 
$$
\mathcal{D}=\left\{f\in H^1_\alpha(\R^2);\;\;\; \mathcal{K}_\alpha f\in L^2(\R^2)\right\}.
$$
Moreover, the unitary group ${\rm e}^{it \mathcal{K}_\alpha}$ extends to a group of isometries on the dual $\mathcal{D}^*$ of $\mathcal{D}$. Therefore for every $u_0\in L^2(\R^2)$, the unique solution to \eqref{E-Lin} reads
$$
u(t,x):={\rm e}^{it \mathcal{K}_\alpha}\,u_0(x)\in C(\R; L^2(\R^2))\cap C^1(\R; \mathcal{D}^*).
$$

 
Note that the operator $\mathcal K_\alpha$ acts on functions as follows
\begin{equation*}
\label{K-radial}
\mathcal K_\alpha u:=-\Delta u+\frac{\alpha^2}{|x|^2}u-2i\frac{\alpha}{|x|^2}x^\perp\cdot\nabla u=-\Delta u+\frac{\alpha^2}{|x|^2}u,
\end{equation*}
where the later equality is valid in the spherically symmetric framework.  

From \cite[Remark 2.1, p. 3891]{GK-JDE-2014} and \cite[(4.1), p. 3895]{GK-JDE-2014}, we know that $\mathcal{K}_\alpha$ and $\mathcal{K}_{\alpha+m}$ are unitary equivalent for any $m\in\Z$. Hence, we may assume without loss of generality that $0<|\alpha|\leq\frac{1}{2}$. 

 The following Hardy inequality was obtained in \cite[Theorem 3]{lw}:
\begin{equation}
\label{Ha-In}
\left(d(\alpha, \Z)\right)^2\,\int_{\R^2}\,\frac{f(x)}{|x|^2}\,dx\leq \int_{\R^2}\,|\nabla_{\alpha}f(x)|^2\,dx,\quad \alpha\in \R\setminus\Z,
\end{equation}
where
\begin{equation}
\label{nab-alpha}
\nabla_\alpha:=\nabla+i\bA_{\alpha}=\nabla+i\frac{\alpha}{|x|^2}x^\perp.
\end{equation}
 From \eqref{nab-alpha} and \eqref{Ha-In}, we obtain the following Sobolev embedding
 \begin{equation}
\label{Sob-emb}
H^1_\alpha(\R^2)\hookrightarrow H^1(\R^2) \hookrightarrow L^r(\R^2),\;\; 2\leq r<\infty,\;\; \alpha \in \R\setminus\Z.
 \end{equation}

The dispersive and Strichartz estimates are fundamental tools in studying the linear and nonlinear dynamics for dispersive equations. In our setting, we refer to \cite{fff2,gyzz}. See also \cite[Theorem 2.3, p. 91]{Fanelli} for a precise statement of the dispersive estimate for the Aharonov-Bohm potential. It is now quite classical that the dispersive estimate implies Strichartz one by  means of the $TT^*$ argument of Keel-Tao \cite{keel}. See, among many, \cite{gyzz, GK-JDE-2014} and the references therein. For the case of Schr\"odinger operator with inverse-square potential, we refer to  \cite{TAMS22}. The Strichartz estimates for the Aharonov-Bohm potential was successfully used to obtain well-posedness and scattering for the homogeneous case, that is \eqref{S} with $\varrho=0$. In  \cite{zz}, the authors prove the scattering in the defocusing regime for radially symmetric initial data under the supplementary conditions $p>3$ and $2\alpha^2>1$. The Scattering in the focusing regime was obtained in \cite{gx} for radially symmetric initial data under the ground state threshold.

 Now, we turn back to \eqref{S}.  The main  purpose of this work is to extend and somehow improve the results in \cite{gx,cg} to the case of singular weight in the nonlinearity, that is $\varrho>0$.

 Since the inverse-square potential $|x|^{-2}$ conserves the same scaling of the Laplace operator, the non-linear Schr\"odinger equation \eqref{S} satisfies the scaling invariance
$$0<\mu\longmapsto u_\mu(t,x):=\mu^{\frac{2-\varrho}{p-1}}u(\mu^{2}t,\mu x).$$
The identity $\|u_\mu(t)\|_{\dot H^s}=\mu^{s-(1-\frac{2-\varrho}{p-1})}\|u(\mu^{2}t)\|_{\dot H^s}$ gives the only one homogeneous Sobolev norm stable under the above dilatation. It is $s_c:=1-\frac{2-\varrho}{p-1}$ called the critical Sobolev index. The energy-critical case corresponds to $s_c=1$, or $p=\infty$. This case is related to the energy conservation law
\begin{align*}
E[u(t))]&:=\int_{\R^2}|\nabla_\alpha u(t)|^2\,dx+\kappa \frac2{p+1}\int_{\R^2}|x|^{-\varrho}|u(t)|^{p+1}\,dx = E[u_0].\tag{Energy}
\end{align*}
The mass-critical one corresponds to $s_c=0$, or $p=p_c:=3-\varrho$ which is related to mass conservation law 
\begin{align*}
M[u(t)]&:=\int_{\R^2}|u(t,x)|^2\,dx = M[u_0].\tag{Mass}
\end{align*}
 It is worth to mention that $s_c<1$ provided that $\varrho<2$. This means that \eqref{S} is energy sub-critical for any $p>1$. 

In the sequel we will focus on the inter-critical regime $0<s_c<1$, that is $3-\varrho<p<\infty$, and define the positive real number
\begin{equation}
\label{gam-c}
\lambda_c:=\frac{1}{s_c}-1=\frac{p_c-1}{p-p_c}=\frac{2-\varrho}{p+\varrho-3}.
\end{equation}
Here and hereafter, one denotes for simplicity the Lebesgue norms
$$\|\cdot\|_r:=\|\cdot\|_{L^r(\R^2)}\quad\mbox{and}\quad \|\cdot\|:=\|\cdot\|_2.$$
Define also the quantities
\begin{eqnarray}
\label{P-u}
\mathcal P[u]&:=&\int_{\R^2}|x|^{-\varrho}|u|^{p+1}\,dx,\\
\label{Q-u}
\mathcal Q[u]&:=&\|{\nabla_\alpha}\, u\|^2-\frac{B}{p+1}\mathcal P[u]:=\|{\nabla_\alpha}\, u\|^2-\frac{p-1+\varrho}{p+1}\mathcal P[u].
\end{eqnarray}
 Denote the scale invariant quantities
\begin{align}
\label{M-E}
\mathcal{EM}[u]&:=\Big(\frac{E[u]}{E[\phi]}\Big)\Big(\frac{M[u]}{M[\phi]}\Big)^{\lambda_c},\\
\label{M-G}
\mathcal{GM}[u]&:=\Big(\frac{\|\nabla u\|}{\|\nabla\,\phi\|}\Big)\Big(\frac{M[u]}{M[\phi]}\Big)^\frac{\lambda_c}2,\\
\label{M-P}
\mathcal{PM}[u]&:=\Big(\frac{\mathcal P[u]}{\mathcal P[\phi]}\Big)\Big(\frac{M[u]}{M[\phi]}\Big)^{\lambda_c},
\end{align}
where $\phi$ is a ground state solution to \eqref{gr}.

From now one hides the variable t for simplicity, spreading it out only when necessary.

Our main contribution reads as follows.
\begin{thm}\label{t1}
Let $0<\varrho<1$, $3-\varrho<p<\infty$, $\alpha\in \R\setminus\Z$ and $u_0\in H_\alpha ^1$. Let $\phi$ be a ground state solution to \eqref{gr} and ${u}\in C([0,T^*), H_\alpha ^1)$ be the maximal solution  of \eqref{S} given by Theorem \ref{loc} below. 
\begin{enumerate}
\item[1)]
Suppose that 
\begin{equation}
\sup_{t\in[0,T^\ast)}\mathcal{PM}[u(t)]<1.\label{ss1}
\end{equation}
Then, $u$ is global and  scatters in $H^1_\alpha $. 
\item[2)]
Suppose that 
\begin{equation}
\sup_{t\in[0,T^\ast)}\mathcal Q[u(t)]<0.\label{bl}
\end{equation}
Then
\begin{equation}
\label{Blow-infinity}
\sup_{t\in[0,T^\ast)}\,\|\nabla u(t)\|=\infty.
\end{equation}

\end{enumerate}
\end{thm}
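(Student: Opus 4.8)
The plan is to treat the two alternatives by genuinely different mechanisms, both resting on the sharp magnetic Gagliardo--Nirenberg inequality
\[
\mathcal P[u]\leq C_{GN}\,\|\nabla_\alpha u\|^{\,p-1+\varrho}\,\|u\|^{\,2-\varrho},
\]
whose best constant is attained by the ground state $\phi$ of \eqref{gr}. By the diamagnetic inequality $\|\nabla|u|\|\leq\|\nabla_\alpha u\|$ and the embedding \eqref{Sob-emb}, this reduces to the non-magnetic inhomogeneous Gagliardo--Nirenberg inequality applied to $|u|$. From the Euler--Lagrange and Pohozaev identities for $\phi$ one reads off $\mathcal Q[\phi]=0$ and the value of $C_{GN}$, which then gives the equivalence, on the conserved energy--mass surface, of the three threshold functionals \eqref{M-E}, \eqref{M-G}, \eqref{M-P}. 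I would record these variational facts first, since each part of the theorem invokes one of them.

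For part 1), the first step promotes the hypothesis \eqref{ss1} into a uniform a priori bound. Feeding the sharp inequality into the conserved energy $E[u]=\|\nabla_\alpha u\|^2-\tfrac{2}{p+1}\mathcal P[u]$ and running a Holmer--Roudenko continuity argument, the subthreshold condition $\sup_t\mathcal{PM}[u]<1$ yields $\sup_{t\in[0,T^\ast)}\|\nabla_\alpha u(t)\|<\infty$ together with the quantitative coercivity $\mathcal Q[u(t)]\geq c\,\|\nabla_\alpha u(t)\|^2>0$; by the blow-up alternative of Theorem \ref{loc} this already forces $T^\ast=\infty$ with bounded $H^1_\alpha$ norm. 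The second step is the Dodson--Murphy scheme: I would prove a virial--Morawetz estimate with a radial weight truncated on balls, using that a radial weight does not see the Aharonov--Bohm point flux and that the coercivity of $\mathcal Q$ supplies the defocusing-type positivity. This makes the localized potential energy integrable in time, hence vanishing along a sequence $t_n\to\infty$, and feeding this into Tao's scattering criterion together with the magnetic Strichartz estimates of \cite{fff2,gyzz} and the inhomogeneous nonlinear estimates for $|x|^{-\varrho}|u|^{p-1}u$ upgrades it to a finite global Strichartz norm and thus scattering in $H^1_\alpha$.

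For part 2), I would use the magnetic virial identity. Since $\bA_\alpha$ is transversal ($x\cdot\bA_\alpha=0$) and scaling critical, the first moment ignores the magnetic phase and the second-moment identity is the clean $\tfrac{d^2}{dt^2}\int|x|^2|u|^2\,dx=8\,\mathcal Q[u]$, the point flux at the origin contributing nothing to a radial weight. The hypothesis \eqref{bl} gives $\mathcal Q[u(t)]\leq-\delta<0$ uniformly, so with finite variance $\int|x|^2|u|^2$ would be concave and eventually negative, a contradiction. To obtain \eqref{Blow-infinity} for arbitrary $H^1_\alpha$ data I argue by contradiction: assume $\sup_t\|\nabla u(t)\|<\infty$. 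As the two-dimensional Hardy inequality fails I cannot bound $\|\nabla_\alpha u\|$ by $\|\nabla u\|$ directly, so I route through the nonlinearity: the diamagnetic inequality $\|\nabla|u|\|\leq\|\nabla u\|$ and Gagliardo--Nirenberg keep $\mathcal P[u(t)]$ bounded, whence energy conservation $\|\nabla_\alpha u\|^2=E[u_0]+\tfrac2{p+1}\mathcal P[u]$ forces $\sup_t\|u(t)\|_{H^1_\alpha}\leq\mathcal M<\infty$ and global existence. Localizing with $V_R(t)=\int\varphi_R|u|^2$, $\varphi_R=R^2\varphi(\cdot/R)$, $\varphi(r)=r^2$ near $0$ and $\varphi''\leq2$, $\varphi'/r\leq2$, the concavity of the weight makes the exterior kinetic error nonpositive, while the remaining errors live in $|x|>R$ and are $O(R^{-2}\mathcal M)$ or $O\!\big(R^{-\varrho}\sup_t\|u\|_{p+1}^{p+1}\big)$; here the decay $|x|^{-\varrho}\leq R^{-\varrho}$ is precisely what kills the nonlinear tail without any radial symmetry. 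Hence $V_R''(t)\leq8\mathcal Q[u(t)]+C(\mathcal M)R^{-\varrho}\leq-4\delta$ for $R$ large, whereas $|V_R'(t)|\lesssim R\,\mathcal M$ stays bounded, and integrating forces $V_R'\to-\infty$, the sought contradiction.

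The principal obstacle is the virial--Morawetz estimate of part 1) in this magnetic, inhomogeneous and non-radial framework: one must verify that $\nabla_\alpha$ may replace $\nabla$ throughout without producing uncontrolled magnetic-field contributions --- exactly where transversality and the concentration of the field at the origin enter, assisted by the Hardy inequality \eqref{Ha-In} to absorb the singularity at $x=0$ --- and that the weight $|x|^{-\varrho}$, whose singularity coincides with that of the potential, preserves the positivity structure. Part 2) is comparatively soft; its only delicate point, the control of the localized virial errors for non-radial data, is dispatched by the decay of $|x|^{-\varrho}$ noted above.
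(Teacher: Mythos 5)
Your overall scheme coincides with the paper's (coercivity from \eqref{ss1}, truncated virial--Morawetz estimate, Tao's scattering criterion with magnetic Strichartz estimates for part 1); localized virial with weight $R^2b(|x|/R)$ and the $O(R^{-\varrho})$ decay of the nonlinear tail, then contradiction from $\sup_t\|\nabla u(t)\|<\infty$ for part 2)), but there is a genuine gap in the variational foundation you propose to ``record first.'' You assert that the sharp magnetic Gagliardo--Nirenberg inequality, with best constant attained by the ground state $\phi$ of \eqref{gr}, ``reduces to'' the non-magnetic inhomogeneous inequality applied to $|u|$ via the diamagnetic inequality $\|\nabla|u|\|\le\|\nabla_\alpha u\|$. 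That reduction only yields the inequality with the \emph{non-magnetic} sharp constant, and it cannot yield attainment by the magnetic ground state: for $\alpha\in\R\setminus\Z$, equality in the diamagnetic inequality fails for nontrivial $u$ (it would force the phase of $u$ to gauge away the non-integer flux of $\bA_\alpha$), so the magnetic optimal constant $\mathtt{K}_{opt}$ is strictly smaller than the non-magnetic one. This is not a cosmetic point: the coercivity lemma \eqref{Coer0}, which is exactly what converts hypothesis \eqref{ss1} --- normalized by the \emph{magnetic} $\phi$ through \eqref{M-P} --- into $\mathcal Q[u]\gtrsim\|u\|_{\dot H^1_\alpha}^2$ and $E[u]\gtrsim\|u\|_{\dot H^1_\alpha}^2$, rests on the formula \eqref{part3} expressing $\mathtt{K}_{opt}$ through $\|\phi\|$ together with the magnetic Pohozaev identities. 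Running the same computation with the non-magnetic constant gives coercivity only under a strictly smaller threshold (normalized by the non-magnetic ground state), which \eqref{ss1} does not imply; you would prove a strictly weaker theorem than the one stated.

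The paper closes precisely this point by a compactness argument absent from your sketch: the embedding $H^1_\alpha\hookrightarrow\hookrightarrow L^{p+1}(|x|^{-\varrho}dx)$ is compact (Lemma \ref{compact}, exploiting the decay of $|x|^{-\varrho}$ at infinity, not the diamagnetic inequality), hence the minimization problem \eqref{Min-Pb} is attained; after rescaling, the minimizer solves \eqref{gr}, which yields \eqref{part3} and the Pohozaev relations used in Lemma \ref{Coe}. Once Proposition \ref{gag} is secured this way, the remainder of your outline is sound and follows the paper closely: \eqref{ss1} directly gives the uniform bound and global existence (no Holmer--Roudenko continuity argument is needed for Theorem \ref{t1}, since \eqref{ss1} is already uniform in time; continuity is only needed for Corollary \ref{t2}), then the Morawetz estimate, the vanishing of the localized potential energy along a sequence $t_n\to\infty$, and Tao's criterion give scattering; and your part 2), including $V_R''\le 8\mathcal Q[u]+O(R^{-2})+O(R^{-\varrho})$ and the contradiction argument, is the paper's proof almost verbatim.
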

In view of the  results stated in the above theorem, some comments arise and we enumerate them in what follows.
~{\rm \begin{itemize}
\item[($i$)] The assumption $\alpha\in \R\setminus\Z$ enables us to use the Hardy estimate \eqref{Ha-In}.
\item[($ii$)]By Proposition \ref{gag}, it follows that the energy is well-defined for $0<\varrho<2$. However, one needs the restriction $0<\varrho<1$ in the local theory. This is due to the method based on using a fix point argument and Strichartz estimates. Moreover, in order to control the source term, one decomposes the integrals on the unit ball of $\R^2$ and its complementary. In a paper in progress, the authors try to improve the range of the inhomogeneous term exponent $\varrho$ by use of Lorentz spaces in the spirit of \cite{last}.
\item[($iii$)] The scattering under the ground state threshold, in the spirit of the pioneering work \cite{Merle}, is a consequence of the above result. This is given in Corollary \ref{t2} below.
\item[($iv$)]
The two above criteria about scattering and blow-up are expressed in terms of non-conserved quantities in the spirit of \cite{dd}. This makes more difficult to check their availability. But the assumptions \eqref{ss1} and \eqref{bl} are weaker than the classical ones, namely \eqref{t11}-\eqref{t12} and \eqref{t11}-\eqref{t13}, which are expressed in term of mass and energy and so more simple to check.
\item[($v$)]
The scattering is obtained by using the new approach of Dodson-Murphy \cite{DM2017}. This method is based on Tao's scattering criteria \cite{Tao} and Morawetz estimates.
\item[($vi$)] 
Thanks to the identity $\mathcal{Q}[u]=\frac{B}{2}E[u]-\frac{B-2}{2}\|\nabla_\alpha u\|^2$, the above blow-up result holds for negative energy.
\item[($vii$)] In the homogeneous case $\varrho=0$, the scattering under the ground state threshold with radial data was obtained in \cite{gx}.
\item[($iix$)] Our results extend and improve the ones in \cite{gx, zz}.
\item[($ix$)] It is expected that the energy critical non-linearity  in \eqref{S} should be of exponential type. This fact was confirmed and extensively studied in the last decade for $\alpha=0$, that is the 2D-{\tt NLS} with exponential non-linearity. See \cite{BDM2018, BDM2019, CIMM, DKM, IMMN} and the references therein. For the inverse-square potential and for both {\tt NLS} and Klein-Gordon equations, see \cite{sd}. 
\item[($x$)]  In a paper in progress, the authors treat the problem \eqref{S} with exponential type non-linearity.
\end{itemize}}

As a consequence of the above result, one has the next dichotomy of global/non global existence of energy solutions under the ground state threshold.
\begin{cor}\label{t2}
Take the assumptions of Theorem \ref{t1} and suppose further that
\begin{equation} \label{t11}
\mathcal{EM}[u_0]<1.
\end{equation}
Then, 
\begin{enumerate}
\item[1)]
the solution of \eqref{S} is global and scatters if 
\begin{equation}\label{t12}
\mathcal{GM}[u_0]<1.\end{equation}
\item[2)]
the solution blows-up in finite or infinite time in the sense of \eqref{Blow-infinity} if 
\begin{equation}\label{t13}
\mathcal{GM}[u_0]>1.\end{equation}
\end{enumerate}
\end{cor}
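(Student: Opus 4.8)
The plan is to deduce Corollary \ref{t2} from Theorem \ref{t1} by a variational ``energy--trapping'' argument: the analytic core (scattering via the Dodson--Murphy/Morawetz scheme, and the virial blow-up) is already contained in Theorem \ref{t1}, so it remains only to show that the \emph{conserved} mass--energy condition \eqref{t11}, together with the sign of $\mathcal{GM}[u_0]-1$ in \eqref{t12}--\eqref{t13}, forces the \emph{non-conserved} hypotheses \eqref{ss1}, respectively \eqref{bl}, to persist for all $t\in[0,T^\ast)$. The two structural ingredients I would isolate first are the sharp Gagliardo--Nirenberg inequality of Proposition \ref{gag}, written for the magnetic gradient as
$$\mathcal P[u]\le C_{GN}\,\|\nabla_\alpha u\|^{B}\,\|u\|^{2-\varrho},\qquad B=p-1+\varrho,$$
with equality at a ground state $\phi$ of \eqref{gr}, and the Pohozaev relations for $\phi$, namely $\mathcal Q[\phi]=0$ (hence $\|\nabla_\alpha\phi\|^2=\tfrac{B}{p+1}\mathcal P[\phi]$) and consequently $E[\phi]=\tfrac{B-2}{B}\|\nabla_\alpha\phi\|^2>0$, where $B>2$ holds throughout the inter-critical regime $3-\varrho<p$.

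Next I would set up the trapping. Writing $y(t):=\mathcal{GM}[u(t)]$ for the scale-invariant magnetic-gradient ratio and inserting the Gagliardo--Nirenberg bound into the focusing energy $E[u]=\|\nabla_\alpha u\|^2-\tfrac{2}{p+1}\mathcal P[u]$, the exponent identity $\lambda_c=\tfrac{2-\varrho}{B-2}$ makes $E[u]\,M[u]^{\lambda_c}$ a function of $y$ alone after the ground-state normalization; concretely one obtains
$$\mathcal{EM}[u(t)]\ \ge\ h\big(y(t)\big),\qquad h(y):=\frac{B\,y^{2}-2\,y^{B}}{B-2}.$$
A direct computation using $\mathcal Q[\phi]=0$ shows that $h$ attains its unique maximum $h(1)=1$ at $y=1$ (this is exactly where the optimality of $\phi$ enters). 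Since mass and energy are conserved, $\mathcal{EM}[u(t)]\equiv\mathcal{EM}[u_0]<1$; because $t\mapsto y(t)$ is continuous on the connected interval $[0,T^\ast)$ and $h(y(t))\le\mathcal{EM}[u_0]<1=h(1)$, the value $y(t)$ can never reach the threshold $1$. Hence the strict inequality in \eqref{t12} (resp. \eqref{t13}) propagates: $y(t)\le y_-<1$ (resp. $y(t)\ge y_+>1$) for all $t$, where $y_\pm$ are the two roots of $h(y)=\mathcal{EM}[u_0]$. In case 1) this uniform bound on $\|\nabla_\alpha u(t)\|$ already yields global existence, $T^\ast=+\infty$.

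It then remains to convert these bounds into the hypotheses of Theorem \ref{t1}. For scattering I would feed the Gagliardo--Nirenberg inequality and its equality case at $\phi$ into the definitions \eqref{M-P}--\eqref{M-G} to get the pointwise comparison $\mathcal{PM}[u]\le \mathcal{GM}[u]^{B}$; combined with $y(t)\le y_-<1$ this gives $\sup_{t}\mathcal{PM}[u(t)]\le y_-^{B}<1$, which is precisely \eqref{ss1}, so Theorem \ref{t1}(1) applies. For blow-up I would use comment (vi) together with $E[\phi]=\tfrac{B-2}{B}\|\nabla_\alpha\phi\|^2$ to derive the algebraic identity
$$\mathcal Q[u]\,M[u]^{\lambda_c}=\frac{B-2}{2}\,\|\nabla_\alpha\phi\|^2 M[\phi]^{\lambda_c}\big(\mathcal{EM}[u]-\mathcal{GM}[u]^2\big),$$
which, with $\mathcal{GM}[u(t)]^2\ge y_+^2>1>\mathcal{EM}[u_0]$, forces $\sup_t\mathcal Q[u(t)]<0$, i.e. \eqref{bl}; Theorem \ref{t1}(2) then yields \eqref{Blow-infinity}.

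The only genuinely delicate point is the energy-trapping/continuity step: one must know that the sharp constant $C_{GN}$ is \emph{attained} at $\phi$ (so that $h(1)=1$ is a strict interior maximum) and that $t\mapsto\mathcal{GM}[u(t)]$ is continuous, in order to rule out crossing $y=1$. A secondary bookkeeping issue is that the gradient appearing in the energy and in $\mathcal Q$ is the magnetic one $\nabla_\alpha$, so the scale-invariant gradient entering the trapping must be read with $\|\nabla_\alpha\cdot\|$, consistently with \eqref{M-E}--\eqref{M-P}. Once these are fixed, the remainder is routine algebra with the conservation laws and the definitions \eqref{M-E}--\eqref{M-P}.
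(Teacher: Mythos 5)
Your proposal is correct and follows essentially the same route as the paper: both arguments reduce Corollary \ref{t2} to Theorem \ref{t1} by an energy-trapping scheme built on the sharp Gagliardo--Nirenberg inequality of Proposition \ref{gag}, the Pohozaev identities for $\phi$, the conservation laws and a continuity argument, where your normalized function $h(y)=\frac{By^{2}-2y^{B}}{B-2}$ is exactly the paper's auxiliary function $g(t)=t^{2}-\frac{2\mathtt{K}_{opt}}{p+1}t^{B}$ after ground-state rescaling, your comparison $\mathcal{PM}[u]\le\mathcal{GM}[u]^{B}$ reproduces the paper's derivation of \eqref{1} (hence \eqref{ss1}), and your identity $\mathcal{Q}[u]\,M[u]^{\lambda_c}=\frac{B-2}{2}\|\nabla_\alpha\phi\|^{2}M[\phi]^{\lambda_c}\left(\mathcal{EM}[u]-\mathcal{GM}[u]^{2}\right)$ is the inequality computed in the paper's blow-up part. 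Your bookkeeping remark that the scale-invariant gradient quantity must be read with the magnetic gradient $\|\nabla_\alpha\cdot\|$ (despite the $\nabla$ appearing in \eqref{M-G}) is also consistent with what the paper actually does in its proof.
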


{ The rest of this paper is organized as follows. The next section contains the main results and some standard estimates needed in the sequel. Section 3 develops a local theory in the energy space. In section 4, one proves the main result of this note about two criteria of scattering versus blow-up of energy solutions. The last section proves a dichotomy of global existence and scattering versus blow-up of solutions under the ground state threshold.}  \\

{Finally, for $a\in\R$, one denotes $a^+$ to be a real number close to $a$ such that $a^+>a$ and $a^-$ to be a real number close to $a$ such that $a^-<a$}
\section{Useful tools and auxiliary results}
\label{S3}

For future convenience,  we recall some known and useful tools which will play an important role in the proof of our main results.

First, let us collect some standard estimates related to the linear electromagnetic Schr\"odinger equation:
\begin{equation}\label{E-Lin}
\begin{cases}
i\partial_t u -\mathcal{K}_\alpha u=F(t,x),   \\
u(t=0,x)  =  u_0(x).
\end{cases}
\end{equation}
The following dispersive estimate can be found in \cite{fff1,fff2}.
\begin{lem}\label{dsp}
Let $\alpha\in\R\setminus\Z$, $t\in\R\setminus\{0\}$ and $2\leq r\leq\infty$. Then,
\begin{equation}
\label{Dis-est}
\left\|e^{it\mathcal{K}_\alpha }\right\|_{L^{r'}\to L^r}\lesssim |t|^{\frac{2}{r}-1}.
\end{equation}
\end{lem}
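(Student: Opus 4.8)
The plan is to deduce the whole family of bounds, indexed by $r\in[2,\infty]$, from the two endpoints by Riesz--Thorin interpolation, and then to obtain the single nontrivial endpoint from an explicit formula for the kernel of the propagator. The endpoint $r=2$ is immediate: since $\mathcal{K}_\alpha$ is self-adjoint and non-negative, $e^{it\mathcal{K}_\alpha}$ is unitary on $L^2$, so $\|e^{it\mathcal{K}_\alpha}\|_{L^2\to L^2}=1=|t|^{0}$. Interpolating this against the other endpoint
\begin{equation*}
\big\|e^{it\mathcal{K}_\alpha}\big\|_{L^1\to L^\infty}\lesssim |t|^{-1}
\end{equation*}
yields the stated bound for all $2\le r\le\infty$. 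Thus everything reduces to a pointwise estimate $|K_t(x,y)|\lesssim|t|^{-1}$, uniform in $x,y$, on the integral kernel $K_t$ of $e^{it\mathcal{K}_\alpha}$.

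To access the kernel I would exploit the rotational structure of $\bA_\alpha$. Writing $x=(r,\theta)$ and $y=(s,\varphi)$ in polar coordinates, $\mathcal{K}_\alpha$ separates: expanding in the angular modes $e^{im\theta}$, $m\in\Z$, the mode-$m$ radial part is the Bessel operator $-\partial_r^2-\tfrac1r\partial_r+\tfrac{\nu_m^2}{r^2}$ with shifted order $\nu_m=|m+\alpha|\ge 0$. Each radial propagator is diagonalized by the Hankel transform of order $\nu_m$, giving the classical Mehler-type formula, so that
\begin{equation*}
K_t(x,y)=\frac{c}{t}\,e^{\,i\frac{r^2+s^2}{4t}}\sum_{m\in\Z} e^{im(\theta-\varphi)}\,(-i)^{\nu_m}\,J_{\nu_m}\!\Big(\frac{rs}{2t}\Big),
\end{equation*}
where $J_\nu$ is the Bessel function of the first kind. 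The prefactor already carries the full gain $t^{-1}$ and the exponential has modulus one, so the desired kernel bound is equivalent to the uniform estimate
\begin{equation*}
\sup_{z>0,\;\psi\in[0,2\pi)}\Big|\sum_{m\in\Z} e^{im\psi}\,(-i)^{|m+\alpha|}\,J_{|m+\alpha|}(z)\Big|\lesssim 1.
\end{equation*}

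The crux, and the main obstacle, is precisely this uniform-in-$z$ bound on the shifted Bessel series. When $\alpha=0$ the Jacobi--Anger identity collapses the sum to $e^{-iz\cos\psi}$, of modulus exactly one; but for $\alpha\notin\Z$ no such closed form is available. I would control the series by splitting the summation according to the size of the order relative to the argument. For $\nu_m\gtrsim z$ the Bessel functions are in their exponentially small regime and the corresponding tail is summable with no loss. For $\nu_m\lesssim z$ one is in the oscillatory regime, where the large-argument asymptotics $J_\nu(z)\sim\sqrt{2/(\pi z)}\,\cos\!\big(z-\tfrac{\nu\pi}{2}-\tfrac{\pi}{4}\big)$, combined with van der Corput / stationary-phase control of the resulting oscillatory sum in $m$, produce an $O(1)$ contribution. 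The delicate part is the transition zone $\nu_m\approx z$, where one must invoke the uniform (Langer/Debye-type) asymptotic expansions of Bessel functions to avoid a logarithmic loss; this is the technical heart of the argument. Once this uniform bound is established, the $L^1\to L^\infty$ estimate, and hence by interpolation the full statement of Lemma~\ref{dsp}, follow.
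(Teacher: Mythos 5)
First, a point of reference: the paper does not prove Lemma \ref{dsp} at all --- it is quoted from Fanelli--Felli--Fontelos--Primo \cite{fff1,fff2} --- so your proposal has to be measured against the literature proof rather than anything in the text. Your reduction (unitarity on $L^2$, Riesz--Thorin interpolation, hence only the $L^1\to L^\infty$ kernel bound $|K_t|\lesssim|t|^{-1}$ is needed) and your kernel representation (angular modes $e^{im\theta}$, radial Bessel operators of order $\nu_m=|m+\alpha|$ diagonalized by Hankel transforms, yielding the factor $t^{-1}e^{i(|x|^2+|y|^2)/4t}$ times the shifted Bessel series) are both correct, and this is indeed the architecture of the known proof. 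So the skeleton is right, and you have correctly located the crux in the uniform bound on $\sum_m e^{im\psi}(-i)^{\nu_m}J_{\nu_m}(z)$.

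The gap is that your treatment of this crux is not merely deferred but, in the bulk regime, incorrect. For $\nu_m\lesssim z$ you propose to insert $J_\nu(z)\sim\sqrt{2/(\pi z)}\cos\big(z-\tfrac{\nu\pi}{2}-\tfrac{\pi}{4}\big)$ and finish by van der Corput. But (i) that asymptotic is uniform only for $\nu=o(\sqrt z)$ (the error is of relative size $\nu^2/z$), and (ii) even where it is valid, the phase is \emph{piecewise linear} in $m$, so there are no stationary points and the sums are geometric. Indeed, writing $(-i)^{\nu}\cos\big(z-\tfrac{\nu\pi}{2}-\tfrac{\pi}{4}\big)=\tfrac12\big(e^{i(z-\pi/4)}e^{-i\pi\nu}+e^{-i(z-\pi/4)}\big)$, the second term carries no $m$-dependence whatsoever, so at $\psi=0$ its contribution over the range $|m|\lesssim z$ has size $z\cdot z^{-1/2}=\sqrt z$. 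Sanity check at $\alpha=0$, $\psi=0$: the exact sum equals $e^{-iz}$ by Jacobi--Anger, of modulus one, while your leading-order computation produces $\sim\sqrt{2z/\pi}$. So the desired $O(1)$ bound cannot come out of fixed-order asymptotics plus van der Corput; the cancellation lives precisely in the curvature in $\nu$ of the true phase, which those asymptotics discard. A correct argument must use the Debye/Olver uniform asymptotics, with phase $\sqrt{z^2-\nu^2}-\nu\arccos(\nu/z)$, throughout the oscillatory range $\nu\le(1-\delta)z$ (its derivative in $m$ is $\psi\pm\arccos(\nu_m/z)+\mathrm{const}$, giving genuine nondegenerate stationary points whose contribution is $O(1)$, in the spirit of cone-propagator analyses of Cheeger--Taylor type), or else resum the series in closed form (the classical Aharonov--Bohm/Fresnel-integral representation of the propagator, which is essentially what the cited papers exploit). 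Until one of these is carried out, the $L^1\to L^\infty$ endpoint, and hence the lemma, is not proved.
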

To state the Strichartz estimates, we need the following definition of admissible pairs.
 \begin{defs}
A pair $(q,r)$ is said to be admissible if
\begin{equation}
\label{Adm}
\frac{1}{q}+\frac{1}{r}=\frac{1}{2},\quad q,r\geq 2,\quad (q,r)\neq (2,\infty).
\end{equation}
\end{defs}
Let $\Gamma$ be the set of all admissible pairs. For $T>0$ and $\Omega\subset \R^2$ a measurable set, define 
\begin{eqnarray*}
\|u\|_{S(\Omega, T)}&=&\sup_{(q,r)\in\Gamma}\|u\|_{L^q(0,T; L^r(\Omega))},\\
\|u\|_{S'(\Omega, T)}&=&\inf_{(q,r)\in\Gamma}\|u\|_{L^{q'}(0,T; L^{r'}(\Omega))}.
\end{eqnarray*}
For $\Omega=\R^2$, we simply write
$$
\|u\|_{S(T)}=\|u\|_{S(\R^2, T)},\quad \|u\|_{S'(T)}=\|u\|_{S'(\R^2, T)}.
$$
Thanks to the above disperive estimate \eqref{Dis-est} and an argument of Keel-Tao \cite{keel}, one obtains some Strichartz estimates as stated below.
\begin{prop}[{\cite{gyzz, GK-JDE-2014}}]

\label{prop2}
Let $\alpha\in\R\setminus\Z$, $T>0$ and $u$ the solution of \eqref{E-Lin}. Then
\begin{equation}
\label{Str-est1}
\|u\|_{S(T)}\lesssim \|u_0\|_{L^2}+\|F\|_{S'(T)}.
\end{equation}
\end{prop}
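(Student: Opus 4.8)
The plan is to deduce \eqref{Str-est1} from the abstract machinery of Keel--Tao \cite{keel}, applied to the unitary propagator $U(t):=e^{it\mathcal{K}_\alpha}$. Writing the solution of \eqref{E-Lin} through Duhamel's formula as
$$u(t)=e^{it\mathcal{K}_\alpha}u_0-i\int_0^t e^{i(t-s)\mathcal{K}_\alpha}F(s)\,ds,$$
it suffices to establish separately the homogeneous bound $\|e^{it\mathcal{K}_\alpha}u_0\|_{S(T)}\lesssim\|u_0\|$ and the inhomogeneous (retarded) bound for the second term, and then to take the supremum over admissible pairs on the left and the infimum over admissible pairs on the right. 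The two inputs required by the Keel--Tao theorem are an energy estimate and an untruncated decay (dispersive) estimate with decay exponent $\sigma=1$, which is exactly $d/2$ in dimension $d=2$.

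First I would verify the energy estimate. Since $\mathcal{K}_\alpha$ is the self-adjoint Friedrichs extension, Stone's theorem guarantees that $e^{it\mathcal{K}_\alpha}$ is a one-parameter unitary group on $L^2(\R^2)$; in particular $\|e^{it\mathcal{K}_\alpha}f\|=\|f\|$ for every $t$, so $\sup_{t}\|U(t)f\|\leq\|f\|$. Next, self-adjointness also yields $U(s)^*=e^{-is\mathcal{K}_\alpha}$ and the group law $U(t)U(s)^*=e^{i(t-s)\mathcal{K}_\alpha}$. Taking $r=\infty$ in Lemma \ref{dsp} furnishes the dispersive estimate $\|e^{i(t-s)\mathcal{K}_\alpha}g\|_\infty\lesssim|t-s|^{-1}\|g\|_1$, i.e. the required decay with $\sigma=1$. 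With these two hypotheses in hand, the abstract Keel--Tao theorem---whose heart is a $TT^*$ argument reducing matters to a Hardy--Littlewood--Sobolev inequality in the time variable---delivers, for every pair $(q,r)\in\Gamma$, the homogeneous estimate $\|e^{it\mathcal{K}_\alpha}u_0\|_{L^q(0,T;L^r)}\lesssim\|u_0\|$ and, for every pair $(\tilde q,\tilde r)\in\Gamma$, the inhomogeneous estimate $\big\|\int_0^t e^{i(t-s)\mathcal{K}_\alpha}F(s)\,ds\big\|_{L^q(0,T;L^r)}\lesssim\|F\|_{L^{\tilde q'}(0,T;L^{\tilde r'})}$.

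To finish, I would apply the homogeneous bound to the first Duhamel term and the inhomogeneous bound to the second, add them, and then take the supremum over $(q,r)\in\Gamma$ in the left-hand norms while taking the infimum over $(\tilde q,\tilde r)\in\Gamma$ in the source term; by the definitions of $\|\cdot\|_{S(T)}$ and $\|\cdot\|_{S'(T)}$ this produces exactly \eqref{Str-est1}. The main point to keep an eye on is the endpoint: because $\sigma=1$ in two dimensions, the Keel--Tao endpoint theorem does not apply and the pair $(2,\infty)$ must be excluded, which is precisely why it is omitted from the admissibility condition \eqref{Adm}. Beyond this exclusion the argument is entirely standard, since the genuinely analytic content---the sharp decay rate $|t|^{-1}$ of the Aharonov--Bohm propagator---is already supplied by Lemma \ref{dsp}.
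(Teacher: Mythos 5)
Your proposal is correct and follows exactly the route the paper itself indicates: the paper does not write out a proof but cites \cite{gyzz, GK-JDE-2014} and notes that the estimate follows from the dispersive bound \eqref{Dis-est} combined with the Keel--Tao $TT^*$ argument \cite{keel}, which is precisely what you carry out (unitarity giving the energy estimate, Lemma \ref{dsp} with $r=\infty$ giving the $|t|^{-1}$ decay, then Duhamel and sup/inf over admissible pairs). Your remark about excluding the endpoint $(2,\infty)$ is also the correct reason for its omission in \eqref{Adm}.
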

Employing $\left[\nabla_{\alpha}, \mathcal{K}_{\alpha}\right]=0$ together with \eqref{Str-est1}, we infer that
\begin{equation}
\label{Str-est2}
\|\nabla_{\alpha}u\|_{S(T)}\lesssim \|u_0\|_{\dot{H}^1_{\alpha}}+\|\nabla_{\alpha}F\|_{S'(T)},
\end{equation}
\begin{equation}
\label{Str-est3}
\|u\|_{L^\infty_T(H^1_\alpha)}\lesssim \|u_0\|_{H^1_{\alpha}}+\|\langle \nabla_{\alpha}\rangle F\|_{S'(T)},
\end{equation}
where
\begin{equation}
\label{nabla-alpha}
\langle \nabla_{\alpha}\rangle=\sqrt{1+|\nabla_\alpha|^2}.
\end{equation}
We also recall the following local-in-time Strichatz estimate (see, for instance, \cite{cc})
\begin{equation}
\label{Loc-Str}
\left\|\int_a^b\, {\rm e}^{i(t-\tau)\mathcal{K}_\alpha}g (\cdot, \tau) d\tau\right\|_{S(\R)}\lesssim \|g\|_{S'(\R)}.
\end{equation}
The following Gagliardo-Nirenberg inequality will be of interest in the proofs of our main results.
\begin{prop}\label{gag}
Let $\alpha\in \R\setminus\Z$, $1<p<\infty$, $0<\varrho<2$. 
Then the following sharp Gagliardo-Nirenberg inequality holds 
\begin{equation}\label{ineq}
\int_{\R^2}|x|^{-\varrho}|f(x)|^{p+1}\,dx\leq \mathtt{K}_{opt} \|f\|^{A}\|f\|_{\dot H^1_\alpha }^{B},\quad  f\in H_\alpha ^1(\R^2),
\end{equation}
where $A$ and $B$ are given by 
\begin{equation}
\label{AB}
B:=p-1+\varrho\quad\text{and} \quad A:=1+p-B.
\end{equation}
Moreover, the sharp constant $K_{opt}$ is given by
\begin{equation}\label{part3}
\mathtt{K}_{opt} =\frac{p+1}{A}\Big(\frac{A}{B}\Big)^\frac{B}2\|\phi\|^{-(p-1)},
\end{equation}
where $\phi$ is a ground state solution to 
\begin{equation}\label{gr}
\mathcal K_\alpha \phi+\phi=|x|^{-\varrho}|\phi|^{p-1}\phi,\quad 0\neq\phi\in H_\alpha ^1(\R^2).
\end{equation}
\end{prop}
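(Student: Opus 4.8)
The plan is to recast \eqref{ineq} as a variational problem and to exploit the dilation invariance of the magnetic Dirichlet energy in dimension two. For $f_{\lambda,\mu}(x):=\mu\,f(\lambda x)$ one checks that $\nabla_\alpha\big(f(\lambda\cdot)\big)=\lambda(\nabla_\alpha f)(\lambda\cdot)$ (since $\bA_\alpha$ is homogeneous of degree $-1$), so that $\|f_{\lambda,\mu}\|_{\dot H^1_\alpha}=\mu\|f\|_{\dot H^1_\alpha}$ is independent of $\lambda$, while $\|f_{\lambda,\mu}\|=\mu\lambda^{-1}\|f\|$ and $\mathcal P[f_{\lambda,\mu}]=\mu^{p+1}\lambda^{\varrho-2}\mathcal P[f]$. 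Requiring both sides of \eqref{ineq} to scale identically in $\mu$ and in $\lambda$ forces precisely the exponents \eqref{AB}, namely $A=2-\varrho$, $B=p-1+\varrho$, $A+B=p+1$. I therefore introduce the Weinstein-type functional
$$J(f):=\frac{\mathcal P[f]}{\|f\|^{A}\,\|f\|_{\dot H^1_\alpha}^{B}},\qquad 0\neq f\in H^1_\alpha(\R^2),$$
and I will show that $\mathtt{K}_{opt}:=\sup_f J(f)$ is finite and attained, then evaluate it.

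First I would prove boundedness of $J$. Combining the diamagnetic inequality $|\nabla|f||\le|\nabla_\alpha f|$ with the identities $\mathcal P[f]=\mathcal P[|f|]$ and $\|f\|=\||f|\|$ reduces \eqref{ineq} to the classical non-magnetic weighted Gagliardo--Nirenberg inequality for $|f|$, valid for $0<\varrho<2$; this yields $\mathtt{K}_{opt}<\infty$. The hard part will be the existence of an extremizer. Using the two scaling parameters above, I normalize a maximizing sequence $(f_n)$ so that $\|f_n\|=\|f_n\|_{\dot H^1_\alpha}=1$, whence $(f_n)$ is bounded in $H^1_\alpha$ and, after passing to a subsequence, converges weakly to some $\phi$. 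The crucial ingredient is the \emph{compactness} of the embedding $H^1_\alpha(\R^2)\hookrightarrow L^{p+1}(|x|^{-\varrho}\,dx)$, which I would obtain by splitting $\mathcal P$ over $\{|x|<R\}$ and $\{|x|>R\}$: on the inner region the local integrability of $|x|^{-\varrho}$ (valid since $\varrho<2$), together with Rellich's theorem and H\"older's inequality, gives strong convergence of the weighted integral, while on the outer region the bound $|x|^{-\varrho}\le R^{-\varrho}$ and \eqref{Sob-emb} make the contribution uniformly small. This passes $\mathcal P[f_n]\to\mathcal P[\phi]=\mathtt{K}_{opt}$ to the limit; weak lower semicontinuity of the $L^2$ and $\dot H^1_\alpha$ norms then forces $\|\phi\|=\|\phi\|_{\dot H^1_\alpha}=1$, so that $J(\phi)=\mathtt{K}_{opt}$ and $\phi\neq0$ is an extremizer.

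Next I would write the Euler--Lagrange equation for $\phi$. Since $J$ is invariant under $f\mapsto\mu f(\lambda\cdot)$, a critical point satisfies a relation $c_1\mathcal K_\alpha\phi+c_2\phi=c_3|x|^{-\varrho}|\phi|^{p-1}\phi$; choosing $\lambda,\mu$ suitably I rescale $\phi$ so that $c_1=c_2=c_3$, which is exactly the ground state equation \eqref{gr}. As $J$ is scale invariant, the value $\mathtt{K}_{opt}=J(\phi)$ is unchanged under this rescaling, so it suffices to evaluate $J$ at any solution $\phi$ of \eqref{gr}.

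Finally I would extract two identities for such a $\phi$. Pairing \eqref{gr} with $\bar\phi$ and taking real parts gives the Nehari identity $\|\nabla_\alpha\phi\|^2+\|\phi\|^2=\mathcal P[\phi]$. Differentiating the associated action along $\phi(\lambda\cdot)$ at $\lambda=1$, and using that $\|\nabla_\alpha\,\cdot\,\|$ is dilation invariant while $\|\phi(\lambda\cdot)\|^2=\lambda^{-2}\|\phi\|^2$ and $\mathcal P[\phi(\lambda\cdot)]=\lambda^{\varrho-2}\mathcal P[\phi]$, yields the Pohozaev identity $\|\phi\|^2=\frac{2-\varrho}{p+1}\mathcal P[\phi]=\frac{A}{p+1}\mathcal P[\phi]$, and hence $\|\nabla_\alpha\phi\|^2=\frac{B}{p+1}\mathcal P[\phi]$. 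Substituting these into $J(\phi)=\mathcal P[\phi]\,\|\phi\|^{-A}\|\nabla_\alpha\phi\|^{-B}$ and eliminating $\mathcal P[\phi]$ in favour of $\|\phi\|^2$ through the Pohozaev relation gives, after simplification, $\mathtt{K}_{opt}=\frac{p+1}{A}\big(\frac{A}{B}\big)^{B/2}\|\phi\|^{-(p-1)}$, which is precisely \eqref{part3}. The only genuinely delicate point is the compactness step in the second paragraph; the remaining identities and algebra are routine.
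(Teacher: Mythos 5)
Your proposal is correct and follows essentially the same route as the paper: a scale-normalized extremizing sequence for the Weinstein quotient, compactness of the embedding $H^1_\alpha(\R^2)\hookrightarrow\hookrightarrow L^{p+1}(|x|^{-\varrho}\,dx)$ established by the same inner/outer splitting with H\"older and Rellich--Kondrachov, passage to a nontrivial extremizer with the normalization forced by weak lower semicontinuity, the Euler--Lagrange equation rescaled into \eqref{gr}, and evaluation of the sharp constant via the Nehari and Pohozaev identities. The only cosmetic differences are that you maximize $\mathcal P[f]/\big(\|f\|^{A}\|f\|_{\dot H^1_\alpha}^{B}\big)$ where the paper minimizes its reciprocal, and you include an explicit a priori boundedness step via the diamagnetic inequality, which the paper leaves implicit in the compactness argument.
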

Before getting to the proof of Proposition \ref{gag}, we prove the following compact Sobolev embedding.

\begin{lem}
\label{compact}
Let $p>1$, $0<\varrho<2$ and $\alpha\in\R\setminus\Z$. Then we have  the compact Sobolev embedding
\begin{equation}\label{cpct}
H_\alpha ^1(\R^2)\hookrightarrow\hookrightarrow L^{p+1}(|x|^{-\varrho}\,dx).
\end{equation}
\end{lem}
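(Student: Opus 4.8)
The plan is to prove the compact embedding $H^1_\alpha(\R^2) \hookrightarrow\hookrightarrow L^{p+1}(|x|^{-\varrho}\,dx)$ by taking an arbitrary bounded sequence $(f_n)$ in $H^1_\alpha$ and extracting a subsequence converging strongly in $L^{p+1}(|x|^{-\varrho}\,dx)$. First I would record the two elementary facts that drive the argument. By \eqref{Sob-emb} we have the continuous chain $H^1_\alpha \hookrightarrow H^1 \hookrightarrow L^r$ for every $2 \le r < \infty$, so $(f_n)$ is bounded in every such $L^r$. Second, the Hardy inequality \eqref{Ha-In} controls $\||x|^{-1}f\|_{L^2}$ by $\|\nabla_\alpha f\|_{L^2}$, which will let me handle the singular weight near the origin. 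Passing to a subsequence, I may assume $f_n \rightharpoonup f$ weakly in $H^1_\alpha$.

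The core strategy is a standard cut-off-and-split argument, decomposing $\R^2$ into an inner region near $0$, an outer region near $\infty$, and a compact annulus in between, and showing that the weighted $L^{p+1}$ mass in the two tails is uniformly small while convergence holds on the annulus. For the outer region $\{|x| > R\}$, the weight $|x|^{-\varrho}$ is bounded by $R^{-\varrho}$, so I interpolate: choose exponents so that $\int_{|x|>R} |x|^{-\varrho}|f_n|^{p+1}\,dx \lesssim R^{-\varrho}\|f_n\|_{L^{p+1}}^{p+1}$, which tends to $0$ uniformly in $n$ as $R \to \infty$ using the uniform $L^{p+1}$ bound. For the inner region $\{|x| < \delta\}$, I use the Hardy bound to absorb the singularity: write
\begin{equation*}
\int_{|x|<\delta} |x|^{-\varrho}|f_n|^{p+1}\,dx \le \Big(\int_{|x|<\delta}|x|^{-2}|f_n|^2\,dx\Big)^{\theta}\Big(\int_{|x|<\delta}|f_n|^{r}\,dx\Big)^{1-\theta},
\end{equation*}
for an appropriate Hölder splitting with $\varrho = 2\theta$ matched to the power $p+1$; since $0<\varrho<2$ we have $\theta \in (0,1)$, the Hardy factor is uniformly bounded, and the remaining high-integrability factor is controlled by the uniform $L^r$ bound and the smallness of the measure $|\{|x|<\delta\}| \to 0$. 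This forces the inner contribution to $0$ uniformly as $\delta \to 0$.

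On the fixed annulus $A_{\delta,R} = \{\delta \le |x| \le R\}$ the weight is bounded above and below, so $L^{p+1}(|x|^{-\varrho}\,dx)$ and $L^{p+1}(A_{\delta,R})$ have equivalent norms there. Since $A_{\delta,R}$ is a bounded domain and $H^1_\alpha \hookrightarrow H^1$ continuously, the classical Rellich-Kondrachov theorem gives $f_n \to f$ strongly in $L^{p+1}(A_{\delta,R})$ along a further subsequence. Combining the three pieces by a standard $\varepsilon/3$ argument—first fixing $\delta$ and $R$ to make both tails smaller than $\varepsilon/3$ uniformly in $n$, then using Rellich on the annulus—yields strong convergence in $L^{p+1}(|x|^{-\varrho}\,dx)$. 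I expect the main obstacle to be the inner-region estimate: making the Hölder exponent bookkeeping consistent so that the Hardy inequality applies with $0<\varrho<2$ while leaving a genuinely subcritical remaining exponent $r<\infty$. The condition $\varrho<2$ is exactly what keeps $\theta<1$ and makes the singular weight integrable against the Hardy factor, so this is where the hypothesis is used, and it is the delicate point to get right.
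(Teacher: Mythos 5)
Your proof is correct, but it treats the singular point differently from the paper. The paper uses only a two-region split $\{|x|<R\}\cup\{|x|>R\}$: on the exterior it bounds the weight by $R^{-\varrho}$ exactly as you do, but on the whole ball (origin included) it applies H\"older using the fact that $|x|^{-\varrho}\in L^{\gamma}(|x|<R)$ for a suitable $\gamma$, which reduces everything to the unweighted norm $\|f_n\|_{L^{q}(|x|<R)}$ with $q=\frac{2(1+p)}{2-\varrho-1/R}>2$; this tends to zero by Rellich--Kondrachov applied on the ball. Your version inserts a third, inner region $\{|x|<\delta\}$ and uses the magnetic Hardy inequality \eqref{Ha-In} to absorb the weight there, so that Rellich--Kondrachov is only invoked on an annulus where the weight is bounded above and below. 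The paper's route is shorter (no Hardy inequality is needed in this proof beyond its implicit role in the embedding \eqref{Sob-emb}, and no diagonal extraction over $\delta$ and $R$), because in two dimensions $H^1$ embeds compactly on bounded domains into $L^q$ for \emph{every} finite $q$, so one can afford to raise the Lebesgue exponent above $p+1$ to pay for the weight. Your route costs an extra region but is more robust: it would survive in situations where the Sobolev exponent is critical and one cannot go above $p+1$, since it only ever uses compactness at the exponent $p+1$ itself.

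One imprecision to fix in the inner region: you say $\int_{|x|<\delta}|f_n|^{r}\,dx$ is controlled ``by the uniform $L^r$ bound and the smallness of the measure.'' A uniform $L^r$ bound alone yields no smallness as $\delta\to0$; you need one further H\"older step with some $s>r$ (available by \eqref{Sob-emb}), namely $\int_{|x|<\delta}|f_n|^{r}\,dx\le |\{|x|<\delta\}|^{1-r/s}\,\|f_n\|_{L^s}^{r}$, which then does go to $0$ uniformly in $n$ as $\delta\to0$. With that line added, your argument is complete.
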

\begin{proof}
Suppose that $f_n\rightharpoonup0$ in $H_\alpha ^1$ and let $R>\frac{1}{2-\varrho}$. It follows from H\"older's inequality that
\begin{eqnarray*}
\int_{\R^2}|x|^{-\varrho}|f_n|^{p+1}\,dx
&=&\int_{|x|<R}|x|^{-\varrho}|f_n|^{p+1}\,dx+\int_{|x|>R}|x|^{-\varrho}|f_n|^{p+1}\,dx,\\
&\leq&\||x|^{-\varrho}\|_{L^{\frac {2}{\varrho+\frac{1}{R}}}(|x|<R)}\|f_n\|_{L^{\frac{2(1+p)}{2-\varrho-\frac{1}{R}}}(|x|<R)}^{p+1}+|R|^{-\varrho}\|f_n\|_{L^{p+1}}^{p+1},\\
&\leq&C_R\|f_n\|_{L^{\frac{2(1+p)}{2-\varrho-\frac{1}{R}}}(|x|<R)}^{p+1}+C|R|^{-\varrho}.
\end{eqnarray*}
Owing to  $\frac{2(1+p)}{2-\varrho-\frac{1}{R}}>2$ and using a Rellich-Kondrachov compactness Theorem, we easily conclude the proof of Lemma \ref{compact}.
\end{proof}

We turn now to the proof of Proposition \ref{gag}.
\begin{proof}[{Proof of Proposition \ref{gag}}]
Define 
\begin{equation}
\label{Ju}
J(u):=\frac{\|u\|^{A}\|u\|_{\dot H^1_\alpha }^{B}}{\mathcal P[u]},
\end{equation}
and consider the minimization problem
\begin{equation}
\label{Min-Pb}
\frac{1}{\mathtt{k}_{opt}}=\inf_{0\neq u\in H_\alpha ^1}\,J(u).
\end{equation}
Let $(v_n)$ be a minimizing sequence  for \eqref{Min-Pb}, that is $v_n \in H^1_\alpha$ and 
$$\frac{1}{\mathtt{k}_{opt}}=\lim_nJ(v_n).$$
Pick
\begin{equation}
\label{lam-mu}
\mu_n:=\frac{\|v_n\|}{\|v_n\|_{\dot H^1_\alpha }},\quad \lambda_n:=\frac1{\|v_n\|_{\dot H^1_\alpha }},
\end{equation}
and define 
\begin{equation}
\label{psi-n}
\psi_n(x)=\lambda_n v_n(\mu_n x).
\end{equation}
One can easily verify that 
$$J(\psi_n)=J(v_n).$$
Hence 
\begin{equation}
\label{psi-nn}
\|\psi_n\|=\|\psi_n\|_{\dot H^1_\alpha }=1\quad\mbox{and}\quad \frac{1}{\mathtt{k}_{opt}}=\lim_n\,J(\psi_n).
\end{equation}
Then, up to a sub-sequence extraction and owing to \eqref{cpct}, there exists $\psi\in H^1_\alpha$ such that $\psi_n\rightharpoonup\psi$ in $H^1_\alpha$ and  $\psi_n \to \psi $ in $L^{p+1}(|x|^{-\varrho}\,dx)$. Consequently,
$$ J(\psi_n)=\frac1{\mathcal P[\psi_n]}\rightarrow\frac1{ \mathcal P[\psi]}\quad\mbox{as}\quad n\to\infty.$$
Using lower semi-continuity of the $H^1_\alpha-$norm, one gets 
$$\max\{\|\psi\|,\|\psi\|_{\dot H^1_\alpha }\}\leq1.$$
Hence, $J(\psi)< \frac{1}{\mathtt{k}_{opt}}$ if $\|\psi\|\|\psi\|_{\dot H^1_\alpha }<1$, which implies that
$$\|\psi\|=\|\psi\|_{\dot H^1_\alpha }=1.$$
It follows that $\psi_n\rightarrow\psi\quad\mbox{in}\quad H^1_\alpha $ and
$$\frac{1}{\mathtt{k}_{opt}}=J(\psi)=\frac1{ \mathcal P[\psi]}.$$
Note that the minimizer $\psi$ satisfies the Euler-Lagrange equation
$$\partial_\varepsilon J(\psi+\varepsilon\eta)_{|\varepsilon=0}=0,\quad\forall \eta\in C_0^\infty(\R^2).$$
Using $\text{div}\left(\frac{x^\perp}{|x|^2}\right)=0$, one can see that  $\psi$ solves
\begin{equation}\label{euler}
(p-1+\varrho)\mathcal K_\alpha \psi+(2-\varrho)\psi-\beta(1+p)|x|^{-\varrho}|\psi|^{p-1}\psi=0.
\end{equation}
Let us pick
$$
\lambda:=\Big((\frac{A}{B})^{-\frac\varrho2}\frac{A}{\beta(1+p)}\Big)^\frac1{p-1},\quad \mu:=\Big(\frac{A}{B}\Big)^\frac1{2},
$$
and re-scale the function $\psi$ as $\psi(x)=\lambda \phi(\mu x)$. A straightforward computation yields
$$\mathcal K_\alpha \phi+\phi-|x|^{-\varrho}|\phi|^{p-1}\phi=0.$$
Moreover, since $\|\psi\|=1=\lambda\mu^{-1}\|\phi\|$, one gets
$$
\frac{1}{\mathtt{k}_{opt}}=\frac{A}{p+1}\Big(\frac{A}{B}\Big)^{-\frac{B}2}\|\phi\|^{p-1}.
$$
This ends the proof of Proposition \ref{gag}.
\end{proof}


\section{Local Theory}
\label{S4}
Our aim in this section is to investigate the local well-posedness of \eqref{S} in the energy space ${H}^1_{\alpha}$. We have the following:
\begin{thm}
\label{loc}
Let $\alpha\in\R\setminus\Z$, $0<\varrho<1$, $p>1$ and $u_0\in H^1_\alpha(\R^2)$. Then there exists $T=T(\|u_0\|_{H^1_\alpha})>0$ and a unique solution $u$ to \eqref{S} with
\begin{equation}
\label{u-lwp}
u,\; \nabla_{\alpha}u\in C(0,T; L^2)\cap \left(\bigcap_{(q,r)\in\Gamma} L^q(0,T; L^r)\right).
\end{equation}
\end{thm}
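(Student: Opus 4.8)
The plan is to solve \eqref{S} by a contraction argument in a suitable Strichartz space, treating the inhomogeneous nonlinearity $F(u):=\kappa|x|^{-\varrho}|u|^{p-1}u$ as a perturbation of the linear flow. Writing the equation in Duhamel form, I would study the map
$$\Phi(u)(t):=e^{it\mathcal{K}_\alpha}u_0-i\int_0^te^{i(t-\tau)\mathcal{K}_\alpha}F(u)(\tau)\,d\tau,$$
and look for a fixed point in the complete metric space
$$X_{T,M}:=\Big\{u:\ \|u\|_{L^\infty(0,T;H^1_\alpha)}+\|u\|_{S(T)}+\|\nabla_\alpha u\|_{S(T)}\le M\Big\},$$
equipped with the distance $d(u,v):=\|u-v\|_{S(T)}$, where $M\simeq\|u_0\|_{H^1_\alpha}$ and $T>0$ is to be chosen small. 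The Strichartz estimates \eqref{Str-est1}, \eqref{Str-est2} and \eqref{Str-est3}, together with the commutation $[\nabla_\alpha,\mathcal{K}_\alpha]=0$, reduce the whole problem to controlling $\|F(u)\|_{S'(T)}$ and $\|\nabla_\alpha F(u)\|_{S'(T)}$ by the $X_{T,M}$-norm, with a gain of a positive power of $T$.

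The core of the argument is the nonlinear estimate, where the singular weight forces a spatial decomposition $\R^2=B\cup B^c$ with $B:=\{|x|<1\}$. On each piece I would apply H\"older's inequality in space to place $|x|^{-\varrho}$ (respectively $|x|^{-\varrho-1}$, coming from $|\nabla_\alpha F(u)|\lesssim |x|^{-\varrho-1}|u|^p+|x|^{-\varrho}|u|^{p-1}|\nabla_\alpha u|$) into a Lebesgue space $L^a$, and absorb the remaining powers of $u$ via the Sobolev embedding \eqref{Sob-emb} $H^1_\alpha\hookrightarrow L^r$, $2\le r<\infty$. Choosing an admissible pair $(q,r)\in\Gamma$ and exponents so that $|x|^{-\varrho}\in L^a(B)$ (which needs $\varrho a<2$) and the resulting power of $u$ sits in an admissible spatial Lebesgue space, a further H\"older argument in the time variable then produces a bound of the form $\|F(u)\|_{S'(T)}\lesssim T^{\theta}\|u\|_{X_{T,M}}^p$ with $\theta>0$. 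The decisive point is the gradient term near the origin: integrability of $|x|^{-\varrho-1}$ on $B$ requires $(\varrho+1)a<2$, i.e. $a<\frac{2}{\varrho+1}$, and this range of admissible $a>1$ is nonempty precisely because $\varrho<1$. This is exactly why the local theory is restricted to $0<\varrho<1$, in accordance with comment $(ii)$.

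With these estimates in hand, the self-mapping property of $\Phi$ on $X_{T,M}$ follows for $M=2C\|u_0\|_{H^1_\alpha}$ and $T$ small depending only on $\|u_0\|_{H^1_\alpha}$, the gain $T^\theta$ absorbing the nonlinear contribution. For the contraction I would use the pointwise inequality $\big||u|^{p-1}u-|v|^{p-1}v\big|\lesssim(|u|^{p-1}+|v|^{p-1})|u-v|$ and repeat the same ball/complement decomposition, now estimating in the lower-order norm $\|\cdot\|_{S(T)}$ only, so that $d(\Phi(u),\Phi(v))\le \tfrac12\, d(u,v)$ after further shrinking $T$. The Banach fixed point theorem then yields a unique solution $u\in X_{T,M}$, and a standard continuity argument based on the Strichartz bound upgrades it to $u,\nabla_\alpha u\in C(0,T;L^2)$ together with membership in every $L^q(0,T;L^r)$, $(q,r)\in\Gamma$, which is exactly \eqref{u-lwp}. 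The main obstacle throughout is the interplay between the two competing constraints---local integrability of $|x|^{-\varrho-1}$ near the origin for the gradient estimate, and the need to retain a strictly positive power of $T$ from the time integration---which simultaneously restricts the choice of admissible Strichartz pair and of the H\"older exponents.
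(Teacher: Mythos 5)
Your proposal is correct and follows essentially the same route as the paper: Duhamel formula plus a Banach fixed point argument in a Strichartz-based space with the weak metric $d(u,v)=\|u-v\|_{S(T)}$, nonlinear estimates obtained by splitting $\R^2$ into the unit ball and its complement and combining H\"older with the Sobolev embedding \eqref{Sob-emb}, and the commutation $[\nabla_\alpha,\mathcal{K}_\alpha]=0$ to propagate $\nabla_\alpha u$. In particular, you correctly isolate the decisive point that the paper encodes in Lemma \ref{rnu2}: local integrability of $|x|^{-\varrho-1}$ near the origin forces the H\"older exponent below $\tfrac{2}{\varrho+1}$, which is compatible with the remaining exponent constraints precisely when $0<\varrho<1$.
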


Before getting into the proof of Theorem \ref{loc}, we need some technical lemmas. 
\begin{lem}
\label{rnu1}
Let $0<\varrho<1$ and $1<p<\infty$. There exist $2<r<\infty$, $0<\gamma<\frac{2}{\varrho}$ and $\frac{2}{p-1}<\nu<\infty$ such that
\begin{equation}
    \label{nu1}
    \frac{1}{r}=\frac{1}{2}-\frac{1}{\gamma}-\frac{1}{\nu}.
\end{equation}
\end{lem}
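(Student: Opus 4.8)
The plan is to recast the three interval requirements together with the linear relation \eqref{nu1} as a feasibility problem for the reciprocals $\frac1r,\frac1\gamma,\frac1\nu$, and then to exhibit an admissible triple explicitly. First I would set $x:=\frac1r$, $y:=\frac1\gamma$ and $z:=\frac1\nu$; the conditions $2<r<\infty$, $0<\gamma<\frac2\varrho$ and $\frac2{p-1}<\nu<\infty$ then translate, respectively, into
\[
0<x<\tfrac12,\qquad y>\tfrac\varrho2,\qquad 0<z<\tfrac{p-1}2,
\]
while \eqref{nu1} becomes the single affine constraint $x+y+z=\frac12$.

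Next I would use this constraint to eliminate $x$, writing $x=\frac12-y-z$. Since $y,z>0$ the upper bound $x<\frac12$ is automatic, so the requirement $0<x<\frac12$ reduces to the single inequality $y+z<\frac12$. The whole problem is therefore equivalent to choosing $y,z$ with
\[
y>\tfrac\varrho2,\qquad 0<z<\tfrac{p-1}2,\qquad y+z<\tfrac12.
\]

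It then remains to check that this region is nonempty, and this is exactly where the hypothesis $\varrho<1$ enters. Indeed, $\varrho<1$ gives $\frac\varrho2<\frac12$, so both $\frac{1-\varrho}2$ and $\frac{p-1}2$ are strictly positive; I would pick any $z$ with $0<z<\min\{\frac{1-\varrho}2,\frac{p-1}2\}$ and then any $y$ in the nonempty open interval $\left(\frac\varrho2,\frac12-z\right)$. By construction $y>\frac\varrho2$, $0<z<\frac{p-1}2$ and $y+z<\frac12$; taking $r,\gamma,\nu$ to be the reciprocals of $x=\frac12-y-z$, $y$ and $z$ yields the desired triple.

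The argument is elementary, and the only genuine point is the compatibility of the lower bound $y>\frac\varrho2$ with the upper bound $y+z<\frac12$: this leaves room for a strictly positive $z$ precisely because $\varrho<1$, whereas for $\varrho\ge1$ the feasible set would be empty. Thus the restriction $0<\varrho<1$ is exactly what makes the statement hold, and I expect no further obstacle beyond keeping track of which endpoints are excluded.
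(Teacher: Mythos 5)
Your proposal is correct and takes essentially the same route as the paper: an elementary feasibility argument in the reciprocal variables, hinging on the fact that $\varrho<1$ leaves room below the budget $\tfrac12$. The only cosmetic difference is that you solve the linear relation for $\tfrac1r$ and check its positivity, whereas the paper first fixes $\nu$ (via a small $\varepsilon$) and $r$, then defines $\tfrac1\gamma=\tfrac12-\tfrac1\nu-\tfrac1r$ and verifies $\tfrac1\gamma>\tfrac\varrho2$.
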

\begin{proof}
Since $0<\varrho<1$ there exists $\varepsilon \in (0,1)$ small enough such that
$$
\frac{1-\varrho}{2}-\frac{p-1}{2}\varepsilon>0.
$$
Choose $2<r<\infty$ satisfying 
$$
\frac{1}{r}<\frac{1-\varrho}{2}-\frac{p-1}{2}\varepsilon,
$$
and define 
$$
\nu=\frac{2}{(p-1)\varepsilon},\;\;\; \frac{1}{\gamma}=\frac{1}{2}-\frac{1}{\nu}-\frac{1}{r}.
$$
It follows that $\frac{2}{p-1}<\nu<\infty$ and 
$$
\frac{1}{\gamma}>\frac{1}{2}-\frac{1}{\nu}-\frac{1-\varrho}{2}+\frac{1}{\nu}=\frac{\varrho}{2}.
$$
This finishes the proof of Lemma \ref{rnu1}.
\end{proof}
\begin{lem}
\label{rnu2}
Let $0<\varrho<1$ and $1<p<\infty$. There exist $2<r<\infty$, $0<\gamma<\frac{2}{\varrho+1}$ and ${2p}<\nu<\infty$ such that
\begin{equation}
    \label{nu2}
    1=\frac{1}{r}+\frac{1}{\gamma}+\frac{p}{\nu}.
\end{equation}
\end{lem}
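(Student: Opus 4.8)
The plan is to follow the template of Lemma \ref{rnu1}, treating \eqref{nu2} as a single linear constraint among the three reciprocals $1/r$, $1/\gamma$ and $p/\nu$, and then to show that the admissible windows for these quantities overlap. First I would rewrite the target identity as $\frac1\gamma=1-\frac1r-\frac p\nu$, so that the requirement $0<\gamma<\frac2{\varrho+1}$ becomes the single inequality $\frac1\gamma>\frac{\varrho+1}2$, equivalently $\frac1r+\frac p\nu<\frac{1-\varrho}2$. Since $0<\varrho<1$, the right-hand side $\frac{1-\varrho}2$ is strictly positive, and this positivity is exactly the slack that makes the construction possible.

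Next I would exploit this positivity by splitting the budget $\frac{1-\varrho}2$ between the two free parameters. Concretely, I would choose $r$ and $\nu$ large enough that both $\frac1r<\frac{1-\varrho}4$ and $\frac p\nu<\frac{1-\varrho}4$; explicitly, any $r>\frac4{1-\varrho}$ and $\nu>\frac{4p}{1-\varrho}$ will do. These automatically satisfy the boundary constraints $r>2$ and $\nu>2p$, because $\frac4{1-\varrho}>4>2$ and $\frac{4p}{1-\varrho}>4p>2p$. Adding the two bounds yields $\frac1r+\frac p\nu<\frac{1-\varrho}2$, so that defining $\frac1\gamma:=1-\frac1r-\frac p\nu$ produces a value with $\frac1\gamma>\frac{\varrho+1}2>0$, whence $0<\gamma<\frac2{\varrho+1}$ as required, and \eqref{nu2} holds by construction.

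I do not expect any genuine obstacle here: the statement is a purely arithmetic feasibility claim, and the only structural input is the sign condition $\varrho<1$. The mild point to keep in mind is that the three constraints must be met simultaneously, and the cleanest way to guarantee this is the symmetric split above, which keeps each reciprocal well inside its admissible range while leaving a definite positive gap for $1/\gamma$. If one instead wished to mirror Lemma \ref{rnu1} verbatim through an auxiliary small parameter, one could set $\nu=\frac{2p}{\varepsilon}$ for $\varepsilon\in(0,1)$, so that $\frac p\nu=\frac\varepsilon2$, and then choose $r$ subject to $\frac1r<\frac{1-\varrho}2-\frac\varepsilon2$ (possible once $\varepsilon<1-\varrho$); but the direct budget-splitting argument seems shortest.
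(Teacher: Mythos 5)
Your proposal is correct and follows essentially the same route as the paper: both arguments choose $r$ and $\nu$ large enough that $\frac1r+\frac p\nu<\frac{1-\varrho}{2}$, then define $\frac1\gamma:=1-\frac1r-\frac p\nu$ and read off $\frac1\gamma>\frac{1+\varrho}{2}$, i.e. $0<\gamma<\frac{2}{\varrho+1}$. The only difference is cosmetic — you split the slack $\frac{1-\varrho}{2}$ symmetrically, while the paper allocates it via an auxiliary $\varepsilon$ with $\nu=\frac{2p}{\varepsilon}$, which is exactly the alternative you mention at the end.
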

\begin{proof}
    Since $0<\varrho<1$ there exists $\varepsilon \in (0,1)$ small enough such that
$$
\frac{1-\varrho}{2}-\frac{\varepsilon}{2}>0.
$$
Choose $2<r<\infty$ satisfying 
$$
\frac{1}{r}<\frac{1-\varrho}{2}-\frac{\varepsilon}{2},
$$
and define 
$$
\nu=\frac{2p}{\varepsilon},\;\;\; \frac{1}{\gamma}=1-\frac{1}{r}-\frac{p}{\nu}.
$$
It follows that ${2p}<\nu<\infty$ and 
$$
\frac{1}{\gamma}>1-\frac{1-\varrho}{2}+\frac{\varepsilon}{2}-\frac{\varepsilon}{2}=\frac{1+\varrho}{2}.
$$
This finishes the proof of Lemma \ref{rnu2}.
\end{proof}
\begin{lem}
\label{q0r0}
Let $1<p<\infty$ and $(q_0,r_0)=\left(\frac{2(p+1)}{p-1}, p+1\right)$. Then $(q_0,r_0)\in\Gamma$ and for any $T>0$,
\begin{equation}
\label{u-v}
\||u|^{p-1}v\|_{L_T^{q_0'}(L^{r_0'})}\lesssim T^{\frac{2}{p+1}}\,\|\langle\nabla_{\alpha}\rangle u\|_{L_T^\infty(L^2)}^{p-1}\,\|v\|_{L_T^{q_0}(L^{r_0})}. 
\end{equation}
\end{lem}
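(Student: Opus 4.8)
The plan is to verify that $(q_0,r_0)$ is admissible by direct computation, and then to establish \eqref{u-v} through a short chain of Hölder inequalities in the space and time variables, closing with the Sobolev embedding \eqref{Sob-emb}. First I would check admissibility: with $r_0=p+1$ and $q_0=\frac{2(p+1)}{p-1}$ one computes $\frac1{q_0}+\frac1{r_0}=\frac{p-1}{2(p+1)}+\frac1{p+1}=\frac12$, while $p>1$ forces $r_0=p+1>2$ and $q_0=\frac{2(p+1)}{p-1}\in(2,\infty)$; in particular $(q_0,r_0)\neq(2,\infty)$, so indeed $(q_0,r_0)\in\Gamma$.

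For the estimate itself, the first step is Hölder in the space variable. Since $r_0'=\frac{p+1}{p}$ and $\frac1{r_0'}=\frac{p-1}{r_0}+\frac1{r_0}$, I would write
\[
\||u|^{p-1}v\|_{L^{r_0'}}\le \big\||u|^{p-1}\big\|_{L^{r_0/(p-1)}}\,\|v\|_{L^{r_0}}=\|u\|_{L^{r_0}}^{p-1}\,\|v\|_{L^{r_0}},
\]
using $\frac{r_0}{p-1}=\frac{p+1}{p-1}$ and $\big\||u|^{p-1}\big\|_{L^{(p+1)/(p-1)}}=\|u\|_{L^{p+1}}^{p-1}$. Taking the $L_T^{q_0'}$ norm in time and applying Hölder so as to place the $u$-factor in $L_T^\infty$, one obtains
\[
\||u|^{p-1}v\|_{L_T^{q_0'}(L^{r_0'})}\le \|u\|_{L_T^\infty(L^{r_0})}^{p-1}\,\|v\|_{L_T^{q_0'}(L^{r_0})}.
\]
Because $q_0\ge 2$ gives $q_0'\le q_0$, a further Hölder in time yields $\|v\|_{L_T^{q_0'}(L^{r_0})}\le T^{\frac1{q_0'}-\frac1{q_0}}\|v\|_{L_T^{q_0}(L^{r_0})}$, and the identity $\frac1{q_0'}-\frac1{q_0}=1-\frac2{q_0}=1-\frac{p-1}{p+1}=\frac2{p+1}$ produces precisely the claimed power of $T$.

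Finally, the Sobolev embedding \eqref{Sob-emb}, valid for $r_0=p+1\in[2,\infty)$, together with the norm identity $\|u\|_{H^1_\alpha}\simeq \|\langle\nabla_\alpha\rangle u\|_{L^2}$, gives $\|u\|_{L_T^\infty(L^{r_0})}\lesssim \|\langle\nabla_\alpha\rangle u\|_{L_T^\infty(L^2)}$. Combining this with the two displayed steps above delivers \eqref{u-v}.

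The argument is essentially exponent-chasing rather than conceptually hard; the only points requiring genuine care are the legitimacy of the spatial splitting $\frac1{r_0'}=\frac{p-1}{r_0}+\frac1{r_0}$ (equivalently $\frac{r_0}{p-1}>1$, which holds for all $p>1$) and the verification that the two time-Hölder steps combine to give exactly $T^{2/(p+1)}$. I expect this bookkeeping of exponents, and keeping track of which factor is measured in $L^\infty_T$ versus $L^{q_0}_T$, to be the main thing to get right.
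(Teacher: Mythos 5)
Your proof is correct and takes essentially the same route as the paper's: spatial H\"older with the splitting $\frac{1}{r_0'}=\frac{p-1}{r_0}+\frac{1}{r_0}$, the Sobolev embedding \eqref{Sob-emb}, and a H\"older step in time to produce the factor $T^{\frac{2}{p+1}}$. The only cosmetic difference is where that factor is harvested: the paper places $u$ in $L_T^{(p^2-1)/2}(L^{r_0})$ and gains the power of $T$ from the $u$-factor, whereas you put $u$ in $L_T^\infty(L^{r_0})$ and gain it from the $v$-factor via $\|v\|_{L_T^{q_0'}(L^{r_0})}\leq T^{\frac{1}{q_0'}-\frac{1}{q_0}}\|v\|_{L_T^{q_0}(L^{r_0})}$ --- equivalent bookkeeping.
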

\begin{proof}
We use H\"older's inequality, the Sobolev embeddin \eqref{Sob-emb} and the equality $\frac{1}{r_0'}=\frac{1}{r_0}+\frac{p-1}{r_0}$ and obtain that
\begin{eqnarray*}
\||u|^{p-1}v\|_{L_T^{q_0'}(L^{r_0'})}&\leq& \||u|^{p-1}\|_{L_T^{\frac{p+1}{2}}(L^{\frac{r_0}{p-1}})}\, \|v\|_{L_T^{q_0}(L^{r_0})},\\
&\leq& \|u\|_{L_T^{\frac{p^2-1}{2}}(L^{r_0})}^{p-1}\, \|v\|_{L_T^{q_0}(L^{r_0})},\\
&\lesssim& \|\langle\nabla_{\alpha}\rangle u\|_{L_T^{\frac{p^2-1}{2}}(L^2)}^{p-1}\,\|v\|_{L_T^{q_0}(L^{r_0})},\\
&\lesssim&T^{\frac{2}{p+1}}\,\|\langle\nabla_{\alpha}\rangle u\|_{L_T^\infty(L^2)}^{p-1}\,\|v\|_{L_T^{q_0}(L^{r_0})}. 
\end{eqnarray*}
This gives \eqref{u-v} as desired.
\end{proof}
\begin{lem}
\label{Cont}
Let $0<\varrho<1$, $1<p<\infty$, $T>0$ and $\alpha \in\R\setminus\Z$. Then there exist $a, b>0$ depending only on $p$ and $\varrho$ such that
\begin{equation}
\label{Cont1}
\||x|^{-\varrho}|u|^{p-1}v\|_{S'(T)}\lesssim \left(T^a+T^b\right)\,\|\langle\nabla_{\alpha}\rangle u\|_{S(T)}^{p-1}\,\|v\|_{S(T)},
\end{equation}
and
\begin{equation}
\label{Cont2}
\|\nabla_{\alpha}\left(|x|^{-\varrho}|u|^{p-1}u\right)\|_{S'(T)}\lesssim \left(T^a+T^b\right)\,\|\langle\nabla_{\alpha}\rangle u\|_{S(T)}^{p}.
\end{equation}
\end{lem}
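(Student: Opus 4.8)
The plan is to exploit that the dual Strichartz norm $\|\cdot\|_{S'(T)}$ is an \emph{infimum} over admissible pairs, so it suffices to control each contribution in one well-chosen space-time Lebesgue norm $L^{q'}_T L^{r'}$. Following the standard device for the singular weight, I would first split the spatial integral over the unit ball $B=\{|x|\le 1\}$ and its complement $B^c=\{|x|>1\}$. On $B^c$ the weight is harmless, $|x|^{-\varrho}\le 1$ (respectively $|x|^{-\varrho-1}\le 1$ for \eqref{Cont2}), so the term reduces to the homogeneous estimate of Lemma \ref{q0r0} applied with the appropriate factor in the role of $v$, producing a factor $T^{b}$ with $b=\tfrac{2}{p+1}$.

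The nontrivial region is $B$, where the singularity is absorbed into an $L^\gamma$ norm of the weight. For \eqref{Cont1} I would use the triple $(r,\gamma,\nu)$ furnished by Lemma \ref{rnu1}: a Hölder split in space places $|x|^{-\varrho}$ in $L^\gamma(B)$ (finite since $\gamma<\tfrac2\varrho$), the factor $|u|^{p-1}$ in $L^\nu$ (that is $u\in L^{(p-1)\nu}$, admissible because $\nu>\tfrac{2}{p-1}$), and $v$ in $L^r$, the identity \eqref{nu1} forcing the output into $L^2_x$, i.e. the dual of the endpoint admissible pair $(\infty,2)$. Converting the $u$-factors by the Sobolev embedding \eqref{Sob-emb} together with $\|u\|_{H^1_\alpha}\simeq\|\langle\nabla_\alpha\rangle u\|_{2}$, and then integrating in time by Hölder against the admissible pair $(q,r)$, yields a strictly positive power $T^{a}$ and the bound $\|\langle\nabla_\alpha\rangle u\|_{S(T)}^{p-1}\|v\|_{S(T)}$.

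For \eqref{Cont2} I would first use the product rule for $\nabla_\alpha$ acting on the scalar weight, namely $\nabla_\alpha(|x|^{-\varrho}w)=-\varrho|x|^{-\varrho-1}\tfrac{x}{|x|}w+|x|^{-\varrho}\nabla_\alpha w$ with $w=|u|^{p-1}u$, together with the pointwise bound $|\nabla_\alpha(|u|^{p-1}u)|\lesssim |u|^{p-1}|\nabla_\alpha u|$. This produces two contributions: the milder one $|x|^{-\varrho}|u|^{p-1}|\nabla_\alpha u|$, handled exactly as in \eqref{Cont1} with $v=\nabla_\alpha u$ and bounded via $\|\nabla_\alpha u\|_{S(T)}\le\|\langle\nabla_\alpha\rangle u\|_{S(T)}$; and the genuinely more singular one $|x|^{-\varrho-1}|u|^{p}$, for which I would invoke Lemma \ref{rnu2}, whose exponents satisfy $\gamma<\tfrac{2}{\varrho+1}$ (so that $|x|^{-\varrho-1}\in L^\gamma(B)$) and $\nu>2p$ (so that $u\in L^\nu$), with \eqref{nu2} placing the output in the dual of the admissible pair $(q,r)$.

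The main obstacle is precisely this last term: the derivative falling on the weight raises the singularity to $|x|^{-\varrho-1}$, and the whole scheme survives only because $\varrho<1$ permits the simultaneous choice of $\gamma<\tfrac{2}{\varrho+1}$ and large $\nu$ in Lemma \ref{rnu2}. The remaining work is bookkeeping: checking that each chosen pair is admissible, that every region contributes a strictly positive power of $T$ (which is what makes the contraction in Theorem \ref{loc} close), and collecting the finitely many resulting powers into the two representative exponents $a,b>0$.
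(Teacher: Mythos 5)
Your proposal is correct and follows essentially the same route as the paper: the ball/complement splitting, Lemma \ref{q0r0} giving the exponent $b=\tfrac{2}{p+1}$ on $\bB^c$, Lemma \ref{rnu1} for \eqref{Cont1} and Lemma \ref{rnu2} for the singular term $|x|^{-\varrho-1}|u|^{p}$ in \eqref{Cont2}, all combined with H\"older, the embedding \eqref{Sob-emb}, and an $L^\infty_T$ bound in time to extract positive powers of $T$. The differences are local. First, on $\bB$ you place the product in the dual of the endpoint pair $(\infty,2)$, i.e. in $L^1_T L^2_x$, with $v\in L^q_T L^r$, whereas the paper places it in $L^{q'}_T L^{r'}$ with $v\in L^\infty_T L^2$; these are equivalent rearrangements of the same H\"older application and yield the same power $T^{1/q'}$. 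Second, for \eqref{Cont2} you use the Leibniz rule for $\nabla_\alpha$ on the scalar weight together with the pointwise bound $|\nabla_\alpha(|u|^{p-1}u)|\lesssim |u|^{p-1}|\nabla_\alpha u|$, while the paper instead uses $|\nabla_\alpha f|\lesssim |\nabla f|+\frac{|f|}{|x|}$ with $f=|x|^{-\varrho}|u|^{p-1}u$. Your variant is in fact slightly more robust: every gradient factor stays as $\nabla_\alpha u$, so $\|\nabla_\alpha u\|_{S(T)}\le \|\langle\nabla_\alpha\rangle u\|_{S(T)}$ applies directly, whereas the paper's route produces factors $\|\nabla u\|_{S(T)}$ and thus tacitly compares $\nabla u$ with $\nabla_\alpha u$ in norms beyond the $L^2$ level where Hardy \eqref{Ha-In} gives it for free. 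The one point you should make explicit is that your pointwise bound is not purely algebraic: writing $\nabla_\alpha(|u|^{p-1}u)=|u|^{p-1}\nabla_\alpha u+u\,\nabla(|u|^{p-1})$, the second term is controlled by $|u|^{p-1}\,|\nabla|u||$, and one needs the diamagnetic-type identity $\nabla|u|=\Re\big(\tfrac{\bar u}{|u|}\nabla_\alpha u\big)$ (valid since $\Re\big(i\bA_\alpha|u|\big)=0$) to conclude $|\nabla|u||\le|\nabla_\alpha u|$ almost everywhere.
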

\begin{proof}
We will making use of the elementary observation:
\begin{equation}
\label{Observ}
|x|^{-\varrho}\in L^{\gamma}(\bB)\;\;\;\text{if}\;\;\; \frac{2}{\gamma}>\varrho\quad\text{and}\quad |x|^{-\varrho}\in L^{\gamma}(\bB^{c})\;\;\;\text{if}\;\;\; \frac{2}{\gamma}<\varrho,
\end{equation}
where $\bB=\{x\in\R^2;\;\;|x|<1\}$ is the unit ball in $\R^2$. 

First, let us prove the estimate \eqref{Cont1}. Write
$$
\||x|^{-\varrho}|u|^{p-1}v\|_{S'(T)}\leq \bI_1+\bI_2,
$$
where
\begin{eqnarray*}
\bI_1&=&\||x|^{-\varrho}|u|^{p-1}v\|_{S'(\bB,T)},\\
\bI_2&=&\||x|^{-\varrho}|u|^{p-1}v\|_{S'(\bB^{c},T)}.
\end{eqnarray*}
Let $r, \gamma, \nu$ be as in Lemma \ref{rnu1} and $q$ such that $(q,r)\in\Gamma$. By H\"older's inequality, \eqref{Observ} and \eqref{Sob-emb} ,
\begin{eqnarray*}
\bI_1&\leq& \||x|^{-\varrho}\|_{L^{\gamma}(\bB)}\,\||u|^{p-1}\|_{L^{q'}_T(L^{\nu})}\,\|v\|_{L^\infty_T(L^2)},\\
&\lesssim& \|u\|_{L^{(p-1)q'}_T(L^{\nu(p-1)})}^{p-1}\,\|v\|_{L^\infty_T(L^2)},\\
&\lesssim &T^{\frac{1}{q'}}\, \|u\|_{L^{\infty}_T(H^{1}_\alpha)}^{p-1}\,\|v\|_{L^\infty_T(L^2)},\\
&\lesssim&T^{\frac{1}{q'}}\, \|\langle\nabla_{\alpha}\rangle u\|_{S(T)}^{p-1}\,\|v\|_{S(T)}.
\end{eqnarray*}
The estimate of $\bI_2$ easily follows from Lemma \ref{q0r0}. Indeed, let $(q_0,r_0)$ be as in Lemma \ref{q0r0}. Then
\begin{eqnarray*}
\bI_2&\lesssim& \||u|^{p-1}v\|_{L_T^{q_0'}(L^{r_0'})},\\
&\lesssim& T^{\frac{2}{p+1}}\,\|\langle\nabla_{\alpha}\rangle u\|_{L_T^\infty(L^2)}^{p-1}\,\|v\|_{L_T^{q_0}(L^{r_0})},\\
&\lesssim& T^{\frac{2}{p+1}}\,\|\langle\nabla_{\alpha}\rangle u\|_{S(T)}^{p-1}\,\|v\|_{S(T)}.
\end{eqnarray*}
This finishes the proof of \eqref{Cont1}.\\

We turn now to \eqref{Cont2}. Clearly
$$
\|\nabla_{\alpha}\left(|x|^{-\varrho}|u|^{p-1}u\right)\|_{S'(T)}\leq \bJ_1+\bJ_2,
$$
where 
\begin{eqnarray*}
\bJ_1&=&\|\nabla_{\alpha}\left(|x|^{-\varrho}|u|^{p-1}u\right)\|_{S'(\bB,T)},\\
\bJ_1&=&\|\nabla_{\alpha}\left(|x|^{-\varrho}|u|^{p-1}u\right)\|_{S'(\bB^c, T)}.
\end{eqnarray*}
Using the fact that $|\nabla_{\alpha}\,f|\lesssim |\nabla\,f|+\frac{|f|}{|x|}$, we get
\begin{eqnarray*}
\bJ_1&\lesssim& \||x|^{-\varrho}|u|^{p-1}\nabla\,u\|_{S'(\bB,T)}+\||x|^{-\varrho-1}|u|^{p-1}u\|_{S'(\bB,T)},\\
\bJ_2&\lesssim& \||x|^{-\varrho}|u|^{p-1}\nabla\,u\|_{S'(\bB^c,T)}+\||x|^{-\varrho-1}|u|^{p-1}u\|_{S'(\bB^c,T)}.
\end{eqnarray*}
Arguing as for $\bI_1$ and owing to $\|\nabla\,f\|\lesssim \|\nabla_\alpha\,f\|$, we infer that
$$
\||x|^{-\varrho}|u|^{p-1}\nabla\,u\|_{S'(\bB,T)}\lesssim T^a\, \|\langle\nabla_{\alpha}\rangle u\|_{S(T)}^{p-1}\,\|\nabla_\alpha\,u\|_{S(T)}\lesssim T^a\,\|\langle\nabla_{\alpha}\rangle u\|_{S(T)}^{p},
$$
for some positive constant $a$. Next we bound $\||x|^{-\varrho-1}|u|^{p-1}u\|_{S'(\bB,T)}$. Let $r, \gamma, \nu$ be as in Lemma \ref{rnu2} and $q$ such that $(q,r)\in\Gamma$. By H\"older's inequality, \eqref{Observ} and \eqref{Sob-emb}, we get
\begin{eqnarray*}
\||x|^{-\varrho-1}|u|^{p-1}u\|_{S'(\bB,T)}&\leq& \||x|^{-\varrho-1}\|_{L^{\gamma}(\bB)}\,\|u\|_{L^{(p-1)q'}_T(L^{\nu})}^{p-1}\,\|u\|_{L^\infty_T(L^{\nu})},\\
&\lesssim& \|\langle\nabla_{\alpha}\rangle u\|_{L^{(p-1)q'}_T(L^{2})}^{p-1}\,\|\langle\nabla\rangle u\|_{L^\infty_T(L^2)},\\
&\lesssim&T^{\frac{1}{q'}}\, \|\langle\nabla_{\alpha}\rangle u\|_{S(T)}^{p}.
\end{eqnarray*}
Therefore $\bJ_1\lesssim T^{a}\, \|\langle\nabla_{\alpha}\rangle u\|_{S(T)}^{p}$. We turn now to the term $\bJ_2$. Arguing as for $\bI_2$, we obtain that
\begin{eqnarray*}
\||x|^{-\varrho}|u|^{p-1}\nabla\,u\|_{S'(\bB^c,T)}&\lesssim& T^{\frac{2}{p+1}}\,\|\langle\nabla_{\alpha}\rangle u\|_{S(T)}^{p-1}\,\|\nabla u\|_{S(T)},\\
\||x|^{-\varrho-1}|u|^{p-1}u\|_{S'(\bB^c,T)}&\lesssim& T^{\frac{2}{p+1}}\,\|\langle\nabla_{\alpha}\rangle u\|_{S(T)}^{p-1}\,\|u\|_{S(T)}.
\end{eqnarray*}
It follows that $\bJ_2\lesssim T^{\frac{2}{p+1}}\, \|\langle\nabla_{\alpha}\rangle u\|_{S(T)}^{p}$. This finishes the proof of \eqref{Cont2}.
\end{proof}

Having at hand the above technical results, we are now able to prove Theorem \ref{loc}.
\begin{proof}[{Proof of Theorem \ref{loc}}]

Thanks to the Duhamel formula, solutions of \eqref{S} are fixed points of the integral functional 
\begin{align}
\Phi(u)(t)&:= e^{it\mathcal K_\alpha }u_0- i\int_0^t e^{i(t-s)\mathcal K_\alpha }[|x|^{-\varrho}|u(s)|^{p-1}u(s)]\,ds.\label{duhamel}
\end{align}
For $R,T>0$ to be chosen later, let
$$
\mathbf{X}(T,R)=\bigg\{\, u\in C_T(H^1_\alpha)\;\;\text{s.t.}\;\; u, \nabla_\alpha u \in \displaystyle{\bigcap_{(q,r)\in\Gamma}}L^q_T(L^r)\;\;\text{and}\;\; \|u\|_{S(T)}\leq R\,\bigg\},
$$
endowed with the distance $d(u,v)=\|u-v\|_{S(T)}$. Clearly $(\mathbf{X}(T,R), d)$ is a complete metric space. Applying Strichartz estimates \eqref{Str-est1} and \eqref{Str-est2}, we get
\begin{eqnarray*}
\|\Phi(u)\|_{S(T)}&\lesssim& \|u_0\|_{L^2}+\||x|^{-\varrho}|u|^{p-1}u\|_{S'(T)},\\
\|\nabla_\alpha \Phi(u)\|_{S(T)}&\lesssim& \|u_0\|_{\dot{H}^1_\alpha}+\|\nabla_\alpha\left(|x|^{-\varrho}|u|^{p-1}u\right)\|_{S'(T)},\\
\|\Phi(u)-\Phi(v)\|_{S(T)}&\lesssim& \||x|^{-\varrho}\left(|u|^{p-1}u-|v|^{p-1}v\right)\|_{S'(T)}.
\end{eqnarray*}
Employing Lemma \ref{Cont} yields
\begin{eqnarray}
\label{stab1}
\|\Phi(u)\|_{S(T)}&\leq& C\|u_0\|_{L^2}+C\left(T^a+T^b\right)\,\|u\|_{S(T)}^p,\\
\label{stab2}
\|\nabla_\alpha \Phi(u)\|_{S(T)}&\leq& C\|u_0\|_{\dot{H}^1_\alpha}+C\left(T^a+T^b\right)\,\|\langle\nabla_{\alpha}\rangle u\|_{S(T)}^{p},\\
\label{contrac}
\|\Phi(u)-\Phi(v)\|_{S(T)}&\leq& C\left(T^a+T^b\right)\,\left(\|u\|_{S(T)}^{p-1}+ \|v\|_{S(T)}^{p-1}\right)\, \|u-v\|_{S(T)},
\end{eqnarray}
for some positive constant $C$. This shows that 
$$\Phi\left( C_T(H^1_\alpha)\bigcap_{(q,r)\in\Gamma} L^q_T(W^{1,r}_\alpha)\right)\subset C_T(H^1_\alpha)\bigcap_{(q,r)\in\Gamma} L^q_T(W^{1,r}_\alpha).$$ 
Let $R=2C\|u_0\|_{H^1_\alpha}$. For $u,v \in \mathbf{X}(T,R)$, we have from \eqref{stab1} and \eqref{contrac},
\begin{eqnarray}
\label{stab1-1}
\|\Phi(u)\|_{S(T)}&\leq& \frac{R}{2}+C\left(T^a+T^b\right)R^p,\\
\label{Contract-1}
\|\Phi(u)-\Phi(v)\|_{S(T)}&\leq& 2C\left(T^a+T^b\right)R^{p-1} \|u-v\|_{S(T)}.
\end{eqnarray}
Choosing $T>0$ such that $2C\left(T^a+T^b\right)R^{p-1}<1$, we conclude the proof  by a classical fixed point argument.
\end{proof}



\section{Proof of Theorem \ref{t1}}
\label{S6}
Let us prepare the proof of the scattering. Here and hereafter, we denote by $B(R):=\{x\in\R^2;\;|x|\leq R\}$ the ball of $\R^2$ centered at the origin and with radius $R>0$ and $B^c(R)$ its complementary in $\R^2$. Also, for $0<R_1<R_2$, one denotes by $C(R_1,R_2):=\{x\in\R^2;\;R_1\leq |x|\leq R_2\}$ the annulus of $\R^2$.
Let $\psi\in C_0^\infty(\R^2)$ be a radial bump function such that
\begin{equation}
 \label{bump-psi} 
 \psi=1\quad\mbox{on}\quad B\left(\frac12\right),\quad\psi=0 \quad \mbox{on}\quad B^c(1)\quad\mbox{and}\quad 0\leq\psi\leq 1.
\end{equation}
For $R>0$, define
\begin{equation}
\label{psi-R}
\psi_{R}(x)=\psi\left(\frac{|x|}{R}\right).
\end{equation}
\subsection{Variational Analysis}

Recall that $\phi$ stands for a radially symmetric decreasing solution to \eqref{gr}. The following inequality will be useful in obtaining a coercivity result.
\begin{lem}
\label{Coe}
Let $u\in H^1_\alpha(\R^2)$. Then,
\begin{equation}
\label{Coer0}
\mathcal P[u]
\leq\frac{p+1}{B}\bigg(\frac{M[u]^{\lambda_c}\mathcal P[u]}{M[\phi]^{\lambda_c}\mathcal P[\phi]}\bigg)^{\frac{B-2}{B}}\;\|u\|_{\dot H^1_\alpha }^2,
\end{equation}
where $B$ is given by \eqref{AB} and $\lambda_c$ is given by \eqref{gam-c}.
\end{lem}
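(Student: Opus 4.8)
The plan is to read \eqref{Coer0} off the sharp Gagliardo--Nirenberg inequality \eqref{ineq} after an elementary interpolation, the constant being pinned down by the ground-state identities for $\phi$. Throughout I use that in the inter-critical range $3-\varrho<p<\infty$ one has $B=p-1+\varrho>2$, so every exponent appearing below is nonnegative, and I recall $A=2-\varrho$, $A+B=p+1$ and $\lambda_c=\frac{A}{B-2}$, i.e.\ $A=\lambda_c(B-2)$.

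First I would write $\mathcal P[u]=\mathcal P[u]^{\frac 2B}\,\mathcal P[u]^{\frac{B-2}B}$ and bound only the first factor: raising \eqref{ineq} to the power $\frac 2B$ gives $\mathcal P[u]^{\frac 2B}\le \mathtt{K}_{opt}^{\frac 2B}\|u\|^{\frac{2A}B}\|u\|_{\dot H^1_\alpha}^2$, so that
\[
\mathcal P[u]\le \mathtt{K}_{opt}^{\frac 2B}\,\|u\|^{\frac{2A}B}\,\mathcal P[u]^{\frac{B-2}B}\,\|u\|_{\dot H^1_\alpha}^2 .
\]
Because $A=\lambda_c(B-2)$ and $M[u]=\|u\|^2$, the factor $\|u\|^{\frac{2A}B}$ equals $M[u]^{\lambda_c\frac{B-2}B}$, which already reproduces the dependence on $M[u]^{\lambda_c}$ and $\mathcal P[u]$ through the common exponent $\frac{B-2}B$ in \eqref{Coer0}.

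It then remains to identify the constant, that is to verify $\mathtt{K}_{opt}^{\frac 2B}=\frac{p+1}B\big(M[\phi]^{\lambda_c}\mathcal P[\phi]\big)^{-\frac{B-2}B}$. Evaluating \eqref{ineq} at the ground state $\phi$, where it is an equality, I get $\mathtt{K}_{opt}=\mathcal P[\phi]\,\|\phi\|^{-A}\|\phi\|_{\dot H^1_\alpha}^{-B}$; using $M[\phi]^{\lambda_c\frac{B-2}B}=\|\phi\|^{\frac{2A}B}$ this yields $\mathtt{K}_{opt}^{\frac 2B}M[\phi]^{\lambda_c\frac{B-2}B}=\mathcal P[\phi]^{\frac 2B}\|\phi\|_{\dot H^1_\alpha}^{-2}$, and multiplying by $\mathcal P[\phi]^{\frac{B-2}B}$ collapses the powers of $\mathcal P[\phi]$ into a single one. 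Thus the whole identification reduces to the one ground-state relation
\[
\mathcal P[\phi]=\frac{p+1}{B}\,\|\phi\|_{\dot H^1_\alpha}^2 ,
\]
after which substituting back into the displayed bound gives exactly \eqref{Coer0}.

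The only non-routine point, and the step I expect to be the main obstacle, is this last identity. Testing the Euler--Lagrange equation \eqref{gr} against $\phi$ produces the Nehari relation $\|\phi\|_{\dot H^1_\alpha}^2+\|\phi\|^2=\mathcal P[\phi]$, so it is enough to establish the ratio $\|\phi\|^2=\frac AB\|\phi\|_{\dot H^1_\alpha}^2$; since $A+B=p+1$ this gives $\mathcal P[\phi]=(1+\frac AB)\|\phi\|_{\dot H^1_\alpha}^2=\frac{p+1}B\|\phi\|_{\dot H^1_\alpha}^2$. I would extract this ratio from the construction in the proof of Proposition \ref{gag}, where $\phi$ arises from the normalized minimizer $\psi$ (with $\|\psi\|=\|\psi\|_{\dot H^1_\alpha}=1$) via the dilation $\psi(x)=\lambda\phi(\mu x)$ with $\mu^2=\frac AB$. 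The delicate feature, proper to two dimensions, is that the homogeneous magnetic norm is scale invariant: since $\bA_{\alpha}$ is homogeneous of degree $-1$ one has $\nabla_\alpha[\phi(\mu\cdot)](x)=\mu(\nabla_\alpha\phi)(\mu x)$, hence $\|\phi(\mu\cdot)\|_{\dot H^1_\alpha}=\|\phi\|_{\dot H^1_\alpha}$, while $\|\phi(\mu\cdot)\|^2=\mu^{-2}\|\phi\|^2$. Matching the two normalizations of $\psi$ then forces $\|\phi\|^2/\|\phi\|_{\dot H^1_\alpha}^2=\mu^2=\frac AB$. Apart from this scaling computation, all that is needed is the careful bookkeeping of exponents, which closes precisely because $A=\lambda_c(B-2)$ and $A+B=p+1$.
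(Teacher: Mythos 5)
Your proof is correct, and its skeleton --- the sharp Gagliardo--Nirenberg inequality \eqref{ineq} combined with a Pohozaev-type identity for the ground state, closed by the exponent identities $A=\lambda_c(B-2)$ and $A+B=p+1$ --- is the same as the paper's; raising \eqref{ineq} to the power $2/B$ instead of working with $\mathcal P[u]^{B/2}$ is only a cosmetic difference. Where you genuinely diverge is in how the constant is identified. The paper first proves both Pohozaev identities \eqref{poh}, namely $\mathcal P[\phi]=\frac{p+1}{A}M[\phi]=\frac{p+1}{B}\|\phi\|_{\dot H^1_\alpha}^2$, by combining the Nehari relation with a virial computation (differentiating the action along $\lambda\mapsto\lambda\phi(\lambda\,\cdot)$ at $\lambda=1$), and then substitutes the explicit formula \eqref{part3} for $\mathtt{K}_{opt}$ into the chain of estimates. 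You bypass \eqref{part3} entirely by invoking equality in \eqref{ineq} at $\phi$, and you derive the single identity you need, $\mathcal P[\phi]=\frac{p+1}{B}\|\phi\|_{\dot H^1_\alpha}^2$, from Nehari plus the ratio $\|\phi\|^2=\frac{A}{B}\|\phi\|_{\dot H^1_\alpha}^2$, which you read off the double normalization $\|\psi\|=\|\psi\|_{\dot H^1_\alpha}=1$ and the rescaling $\psi=\lambda\phi(\mu\,\cdot)$, $\mu^2=A/B$, using the two-dimensional scale invariance of the magnetic $\dot H^1_\alpha$ norm (valid since $\mathbf{A}_\alpha$ is homogeneous of degree $-1$); this computation is correct. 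The trade-off is worth noting: the paper's virial argument applies to an arbitrary solution of \eqref{gr}, whereas your two inputs --- equality in \eqref{ineq} at $\phi$, and the norm ratio --- are facts about the particular ground state constructed in Proposition \ref{gag}. For that $\phi$ both are essentially immediate (equality in \eqref{ineq} follows from the scale invariance of the functional $J$ in \eqref{Ju}, since $J(\phi)=J(\psi)=1/\mathtt{K}_{opt}$; you should state this one-line justification rather than merely assert the equality), but for a general ground state they would themselves require a virial-type argument. Since \eqref{part3}, the quantities \eqref{M-E}--\eqref{M-P}, and Theorem \ref{t1} all refer to this constructed $\phi$, your reading is consistent with the paper's usage, and your route is arguably leaner in that it never needs the explicit value of $\mathtt{K}_{opt}$.
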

\begin{proof}
Thanks to Pohozaev identities, one has
\begin{equation}\label{poh}
\mathcal P[\phi]=\frac{p+1}{A}\,M[\phi]=\frac{p+1}{B}\Big(\|\nabla\phi\|^2+\alpha^2\||x|^{-1}\phi\|^2\Big).
\end{equation}
Indeed, multiplying \eqref{gr} with $\bar\phi$ and integrating, it follows that
$$\int_{\R^2}\mathcal K_\alpha(\phi)\bar\phi\,dx+\|\phi\|^2=\mathcal P[\phi].$$
Moreover, since $\text{div}\left(\frac{x^\perp}{|x|^2}\right)=0$, an integration by parts gives
\begin{eqnarray*}
\int_{\R^2}\mathcal K_\alpha(\phi)\bar\phi\,dx
&=&\int_{\R^2}\Big(-\Delta\phi-2i\frac{\alpha}{|x|^2}x^\perp\cdot\nabla\phi+\frac{\alpha^2}{|x|^2}\Big)\bar\phi\,dx\\
&=&\|\nabla\phi\|^2-2i\alpha\int_{\R^2}\frac{x^\perp}{|x|^2}\cdot\nabla\phi\,\bar\phi\,dx+\alpha^2\left\|\frac{\phi}{|x|}\right\|^2\\
&=&\|\nabla\phi\|^2+2\alpha\Re\left(i\int_{\R^2}\frac{x^\perp}{|x|^2}\cdot\overline{\nabla\phi}\phi\,dx\right)+\alpha^2\left\|\frac{\phi}{|x|}\right\|^2\\
&=&\|{\nabla_\alpha}\phi\|^2.
\end{eqnarray*}
Thus,
$$\|{\nabla_\alpha}\phi\|^2+\|\phi\|^2=\mathcal P[\phi].$$
Define the action
$$S(\phi):=\|{\nabla_\alpha}\phi\|^2+\|\phi\|^2-\frac2{p+1}\mathcal P[\phi].$$
Since $S'(\phi)=0$, one gets $\partial_\lambda\Big(S(\phi_{\alpha,\beta}^\lambda)\Big)_{\lambda=1}=0$, where $\phi_{\alpha,\beta}^\lambda(x):=\lambda^\alpha(\phi(\lambda^\beta x)$. A straightforward computation gives
\begin{gather*}
\|{\nabla_\alpha}\phi^{\lambda}_{\alpha,\beta}\|^2=\lambda^{2\alpha}\Big(\|\nabla \phi\|^2+\||x|^{-1}u\|^2\Big),\\
\|\phi^{\lambda}_{\alpha,\beta}\|=\lambda^{\alpha-\beta}\|\phi\|,\\
\mathcal P[\phi^{\lambda}_{\alpha,\beta}]=\lambda^{\alpha(1+p)+\beta(-2+\varrho)} \mathcal P[\phi].
\end{gather*}
Therefore
\begin{eqnarray*}
\partial_\lambda\Big(S(\phi_{\alpha,\beta}^\lambda)\Big)_{|\lambda=1}
&=&2\alpha\|{\nabla_\alpha}\phi\|^2+2(\alpha-\beta)\|u\|^2-2\frac{\alpha(1+p)+\beta(-2+\varrho)}{p+1}\mathcal P[\phi].
\end{eqnarray*}
By taking $\alpha=\beta=1$, we obtain that
$$\|{\nabla_\alpha}\phi\|^2=\frac{B}{p+1}\mathcal P[\phi].$$
This leads to
$$\|\phi\|^2=(1-\frac{B}{p+1})\mathcal P[\phi]=\frac{A}{p+1}\mathcal P[\phi].$$
Using the Gagliardo-Nirenberg inequality \eqref{ineq}, the expression of $\mathtt{K}_{opt}$ given by \eqref{part3}, the pohozaev identities \eqref{poh} and the identities $(p-1)i_c=B-2$ and $\lambda_c(B-2)=A$, one writes
\begin{eqnarray*}
[\mathcal P[u]]^\frac{B}2
&\leq& {\tt K}_{opt} \left(\|u\|^{2\lambda_c}\mathcal P[u]\right)^{\frac B2-1}\,\| u\|_{\dot H^1_\alpha }^B\\
&\leq& \frac{p+1}{A}\left(\frac AB\right)^{\frac{B}2}\|\phi\|^{-(p-1)}\left(M[u]^{\lambda_c}\mathcal P[u]\right)^{\frac B2-1}\| u\|_{\dot H^1_\alpha }^B\\
&\leq& \frac{p+1}A\left(\frac AB\right)^{\frac{B}2}M[\phi]^{\frac{A-(p-1)}2}[\mathcal P[\phi]]^{\frac B2-1}\bigg(\frac{M[u]^{\lambda_c}\mathcal P[u]}{M[\phi]^{\lambda_c}\mathcal P[\phi]}\bigg)^{\frac B2-1}\| u\|_{\dot H^1_\alpha }^B\\
&\leq&\bigg(\frac AB\frac{\mathcal P[\phi]}{M[\phi]}\bigg)^{\frac{B}2}\,\bigg(\frac{M[u]^{\lambda_c}\mathcal P[u]}{M[\phi]^{\lambda_c}\mathcal P[\phi]}\bigg)^{\frac B2-1}\,\| u\|_{\dot H^1_\alpha }^B\\
&\leq&\bigg(\frac{M[u]^{\lambda_c}\mathcal P[u]}{M[\phi]^{\lambda_c}\mathcal P[\phi]}\bigg)^{\frac B2-1}\,\bigg(\frac{p+1}B\| u\|_{\dot H^1_\alpha }^2\bigg)^{\frac{B}2}.
\end{eqnarray*}
This leads to \eqref{Coer0} as desired.
\end{proof}
As a consequence of the above lemma, we obtain the following coercity result.
\begin{cor}\label{bnd}
Let $u\in H_\alpha ^1(\R^2)$ and $\varepsilon\in (0,1)$ satisfying
\begin{equation}\label{1}
\mathcal P[u][M[u]]^{\lambda_c}\leq(1-\varepsilon)\mathcal P[\phi][M[\phi]]^{\lambda_c}.
\end{equation}
Then, 
\begin{equation}
\label{Coer1}
\mathcal P[u]\leq \frac{p+1}{B}\;(1-\varepsilon)^{\frac{B-2}{B}}\;\|u\|_{\dot H^1_\alpha }^2\leq \frac{p+1}{B}\;\| u\|_{\dot H^1_\alpha }^2,
\end{equation}
and 
\begin{equation}
\label{Coer2}
\| u\|_{\dot H^1_\alpha }^2-\frac{B}{p+1}\mathcal P[u]\geq c(\varepsilon, B)\;\| u\|_{\dot H^1_\alpha }^2,
\end{equation}
where $c(\varepsilon ,B):= 1-(1-\varepsilon)^{\frac{B-2}{B}}>0$ since $B>2$.
Moreover, for $\varepsilon$ small enough, we have
\begin{equation}
\label{Coer3}
E[u]\geq \frac{B-2}{B}\;\| u\|_{\dot H^1_\alpha }^2.
\end{equation}
\end{cor}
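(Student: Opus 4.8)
The plan is to obtain all three inequalities as immediate algebraic consequences of Lemma \ref{Coe}, the hypothesis \eqref{1}, and the fact that $B=p-1+\varrho>2$ in the inter-critical regime (indeed $p>3-\varrho$ is equivalent to $p-1+\varrho>2$). First I would establish \eqref{Coer1}. Because $B>2$, the exponent $\frac{B-2}{B}$ is positive and the map $s\mapsto s^{\frac{B-2}{B}}$ is increasing on $[0,\infty)$. The hypothesis \eqref{1} says exactly that
$$\frac{M[u]^{\lambda_c}\mathcal P[u]}{M[\phi]^{\lambda_c}\mathcal P[\phi]}\leq 1-\varepsilon,$$
so raising both sides to the power $\frac{B-2}{B}$ gives
$$\bigg(\frac{M[u]^{\lambda_c}\mathcal P[u]}{M[\phi]^{\lambda_c}\mathcal P[\phi]}\bigg)^{\frac{B-2}{B}}\leq (1-\varepsilon)^{\frac{B-2}{B}}\leq 1.$$
Substituting this bound into the right-hand side of \eqref{Coer0} yields both inequalities in \eqref{Coer1}.

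Next, \eqref{Coer2} follows by subtraction. From the first inequality in \eqref{Coer1} one has $\frac{B}{p+1}\mathcal P[u]\leq (1-\varepsilon)^{\frac{B-2}{B}}\|u\|_{\dot H^1_\alpha}^2$, hence
$$\|u\|_{\dot H^1_\alpha}^2-\frac{B}{p+1}\mathcal P[u]\geq\Big(1-(1-\varepsilon)^{\frac{B-2}{B}}\Big)\|u\|_{\dot H^1_\alpha}^2=c(\varepsilon,B)\,\|u\|_{\dot H^1_\alpha}^2,$$
and $c(\varepsilon,B)>0$ precisely because $1-\varepsilon<1$ together with $\frac{B-2}{B}>0$ forces $(1-\varepsilon)^{\frac{B-2}{B}}<1$.

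Finally, for \eqref{Coer3} I would work in the focusing case, where the energy reads $E[u]=\|u\|_{\dot H^1_\alpha}^2-\frac{2}{p+1}\mathcal P[u]$, and use the identity $\mathcal Q[u]=\frac{B}{2}E[u]-\frac{B-2}{2}\|\nabla_\alpha u\|^2$ recorded in Remark $(vi)$. Solving this identity for $E[u]$ gives
$$E[u]=\frac{2}{B}\,\mathcal Q[u]+\frac{B-2}{B}\,\|u\|_{\dot H^1_\alpha}^2,$$
and since \eqref{Coer2} guarantees $\mathcal Q[u]=\|u\|_{\dot H^1_\alpha}^2-\frac{B}{p+1}\mathcal P[u]\geq 0$, the bound $E[u]\geq\frac{B-2}{B}\|u\|_{\dot H^1_\alpha}^2$ follows at once.

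Every step is an elementary manipulation of the hypothesis and of Lemma \ref{Coe}, so there is no genuine obstacle; the only point demanding care is tracking the sign of the exponent $\frac{B-2}{B}$ (equivalently, verifying $B>2$ from $p>3-\varrho$), which is exactly what powers the monotonicity argument in \eqref{Coer1} and the strict positivity of $c(\varepsilon,B)$. I would also remark that this argument in fact delivers \eqref{Coer3} for every $\varepsilon\in(0,1)$, so the qualifier \emph{for $\varepsilon$ small enough} in the statement is a harmless weakening rather than a necessity.
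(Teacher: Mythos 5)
Your proof is correct, and for \eqref{Coer1} and \eqref{Coer2} it coincides with the paper's argument: \eqref{Coer1} is read off from \eqref{Coer0} together with \eqref{1} and the monotonicity of $s\mapsto s^{\frac{B-2}{B}}$, and \eqref{Coer2} follows by subtraction. The only divergence is in \eqref{Coer3}. The paper substitutes the first inequality of \eqref{Coer1} directly into $E[u]=\|u\|_{\dot H^1_\alpha}^2-\frac{2}{p+1}\mathcal P[u]$ to get $E[u]\geq\bigl(1-\frac{2}{B}(1-\varepsilon)^{\frac{B-2}{B}}\bigr)\|u\|_{\dot H^1_\alpha}^2$ and then invokes the limit of this constant as $\varepsilon\to 0$, which is why the statement carries the qualifier ``for $\varepsilon$ small enough.'' You instead rewrite the energy through the identity $E[u]=\frac{2}{B}\mathcal Q[u]+\frac{B-2}{B}\|u\|_{\dot H^1_\alpha}^2$ and use the nonnegativity of $\mathcal Q[u]$ supplied by \eqref{Coer2}; this is an equivalent algebraic rearrangement of the same input, but it makes transparent that \eqref{Coer3} holds for \emph{every} $\varepsilon\in(0,1)$, with no smallness needed. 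Your closing remark on this point is accurate, and in fact the paper's own displayed bound already implies it, since $(1-\varepsilon)^{\frac{B-2}{B}}<1$ gives $1-\frac{2}{B}(1-\varepsilon)^{\frac{B-2}{B}}>\frac{B-2}{B}$ outright, so the limiting argument (and the restriction to small $\varepsilon$) is superfluous in the original proof as well. One small caveat: your phrase ``work in the focusing case'' matches what the paper implicitly does, since the whole variational section concerns the focusing equation and in the defocusing case \eqref{Coer3} is trivial because $\mathcal P[u]\geq 0$.
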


\begin{proof}
Inequality \eqref{Coer1} follows immediately from \eqref{Coer0} and \eqref{1}.
To prove \eqref{Coer2} we use the first inequality in \eqref{Coer1} and the fact that $B>2$ and $\varepsilon\in (0,1)$.
Finally, using the first inequality in \eqref{Coer1}, we infer
\begin{eqnarray*}
E[u]
&=&\| u\|_{\dot H^1_\alpha }^2-\frac2{p+1}\mathcal P[u]\\
&\geq&\bigg(1-\frac{2}B\Big(1-\varepsilon\Big)^\frac{B-2}B\bigg)\| u\|_{\dot H^1_\alpha }^2 .
\end{eqnarray*}
Since $1-\frac{2}B\Big(1-\varepsilon\Big)^\frac{B-2}B\to 1-\frac{2}{B}$ as $\varepsilon\to 0$ and $B>2$, we get \eqref{Coer3}.
\end{proof}
\begin{rem}
Since $$\mathcal P[\psi_R\,u]\leq \mathcal P[u]\quad\mbox{and}\quad M[\psi_R\,u]\leq M[u], \;\;\; \forall\;\;\;R>0,$$
inequalities \eqref{Coer1}-\eqref{Coer2} remain true for $\psi_R\,u$ instead of $u$. Namely, we have
\begin{equation}
\label{Coer1-R}
\mathcal P[\psi_R\,u]\leq \frac{p+1}{B}\;\|\psi_Ru\|_{\dot H^1_\alpha }^2,
\end{equation}
and
\begin{equation}
\label{Coer2-R}
\|\psi_Ru\|_{\dot H^1_\alpha }^2-\frac{B}{p+1}\mathcal P[\psi_R\,u]\geq c(\varepsilon, B)\;\|\psi_Ru\|_{\dot H^1_\alpha }^2.
\end{equation}
\end{rem}
\begin{rem}
The solution is global by \eqref{Coer3}.
\end{rem}

\subsection{Morawetz estimate}
Let $b=b(x)=b(|x|):\R^2\to\R$ be a radial smooth function sufficiently decaying at infinity and $u\in C([0,T^*); H^1_{\alpha})$ be the maximal solution of \eqref{S}. One denotes the virial potential
\begin{equation}
\label{Vb}
V_b:t\mapsto\int_{\R^2}b(x)|u(t,x)|^2\,dx.
\end{equation}
Here and hereafter, subscripts denote partial derivatives and repeated indexes are summed and $\partial_r$ is the radial derivative. 
Now, taking into account \cite[Theorem 1.2, p. 252]{fv} and \cite[Theorem 3.1, p. 9]{Garcia} for $V:=-|x|^{-\varrho}|u|^{p-1}$, one has
\begin{eqnarray*}
V_b''(t)
&=&-\int_{\R^2}\Delta^2b|u|^2\,dx+4\int_{\R^2}\nabla_\alpha uD^2b\overline{\nabla_\alpha u}\,dx\\
&+&4\Im\Big(\int_{\R^2}ub'{\bf B}\frac{x}{|x|}\cdot\overline{\nabla_\alpha u}\,dx\Big)-2\int_{\R^2}\left(\nabla b\cdot \nabla V\right)|u|^2\,dx.
\end{eqnarray*}
Here,
$${\bf B}(x)\in \mathcal{M}_{2\times 2}(\R),\quad {\bf B}_{ij}:=\partial_{x_j}\bA_\alpha^i-\partial_{x_i}\bA_\alpha^j,$$
where $\bA_\alpha$ is given by \eqref{A-alpha}.
A direct calculus gives ${\bf B}=0$ and so with integration by parts
\begin{eqnarray*}
V_b''(t)
&=&-\int_{\R^2}\Delta^2b|u|^2\,dx+4\int_{\R^2}\nabla_\alpha uD^2b\overline{\nabla_\alpha u}\,dx\\
&+&2\int_{\R^2}\nabla b\cdot\nabla(|x|^{-\varrho}|u|^{p-1})|u|^2\,dx\\
&=&-\int_{\R^2}\Delta^2b|u|^2\,dx+4\int_{\R^2}\nabla_\alpha uD^2b\overline{\nabla_\alpha u}\,dx\\
&-&2\int_{\R^2}\Delta b|x|^{-\varrho}|u|^{1+p}\,dx-2\int_{\R^2}\nabla b\cdot\nabla(|u|^2)|x|^{-\varrho}|u|^{p-1}\,dx.
\end{eqnarray*}

Also, with integration by parts
\begin{eqnarray}
V_b''(t)
&=&-\int_{\R^2}\Delta^2b|u|^2\,dx+4\int_{\R^2}\nabla_\alpha uD^2b\overline{\nabla_\alpha u}\,dx\nonumber\\
&-&2\int_{\R^2}\Delta b|x|^{-\varrho}|u|^{1+p}\,dx-\frac4{1+p}\int_{\R^2}\nabla b\cdot\nabla(|u|^{1+p})|x|^{-\varrho}\,dx\nonumber\\
&=&-\int_{\R^2}\Delta^2b|u|^2\,dx+4\int_{\R^2}\nabla_\alpha uD^2b\overline{\nabla_\alpha u}\,dx\nonumber\\
&-&\frac{2(p-1)}{1+p}\int_{\R^2}\Delta b|x|^{-\varrho}|u|^{1+p}\,dx+\frac4{1+p}\int_{\R^2}\nabla b\cdot\nabla(|x|^{-\varrho})|u|^{1+p}\,dx.\label{V-b}
\end{eqnarray}

Consider, for $R>0$, a smooth radial real-valued function $f:=f_R$ such that
\begin{equation}
\label{f}
f(r):=\left\{
\begin{array}{ll}
{r^2},\quad\mbox{if}\quad 0\leq r\leq R,\\
3Rr,\quad\mbox{if}\quad  r>2R,
\end{array}
\right.
\end{equation}
such that on the annulus $C(R,2R)$ one has
\begin{equation}\label{ff}
\min\{\partial_rf,\partial^2_rf\}\geq0,\quad |\partial^\gamma f|\lesssim R|x|^{1-|\gamma|}.
\end{equation}

Under these conditions, the matrix $(f_{jk})$ is non-negative. Moreover, by the radial identity
\begin{equation}\label{symm}
\frac{\partial^2}{\partial x_l\partial x_k}:=\partial_l\partial_k=\Big(\frac{\delta_{lk}}r-\frac{x_lx_k}{r^3}\Big)\partial_r+\frac{x_lx_k}{r^2}\partial_r^2,
\end{equation}
one gets
$$
\left\{
\begin{array}{ll}
f_{jk}=2\delta_j^k,\quad\Delta f=4,\quad \Delta^2f=0,\quad 0\leq r\leq R,\\
f_{jk}=\frac{3R}r[\delta_j^k-\frac{x_jx_k}{r^2}],\quad\Delta f=\frac{3R}r,\quad \Delta^2f=\frac{3R}{r^3},\quad  r>2R.
\end{array}
\right.
$$
Denote by $V_R:=V_{f_R}$ and $\not\nabla_\alpha :=\nabla_\alpha -\frac{x\cdot\nabla_\alpha}{|x|^2}x$ the angular gradient. Thus,
\begin{align*}
 V_R''(t)
&=8\int_{B(R)}\Big(|\nabla_\alpha u|^2-\frac{B}{p+1}|x|^{-\varrho}|u|^{p+1}\Big)\,dx\\
&+\int_{B^c(2R)}\Big(\frac{12R}{|x|}|\not\!\nabla_\alpha u|^2-3R\frac{|u|^2}{|x|^3}-\frac{6R(p-1+2\varrho)}{p+1}|x|^{-\varrho-1}|u|^{p+1}\Big)\,dx\\
&+\int_{C(R,2R)}\Big(-\Delta^2f|u|^2+4\nabla_\alpha uD^2f\overline{\nabla_\alpha u}-\frac{2(p-1)}{p+1}\Delta f|x|^{-\varrho}|u|^{p+1}\Big)dx\\&+\int_{C(R,2R)}\Big(\frac{4}{p+1}\nabla f\cdot\nabla(|x|^{-\varrho})|u|^{p+1}\Big)\,dx.
\end{align*}
It follows that
\begin{align*}
V_R''(t)&\geq 8\int_{B(R)}\Big(|\nabla_\alpha u|^2-\frac{B}{p+1}|x|^{-\varrho}|u|^{p+1}\Big)\,dx\\&-\int_{B^c(2R)}\Big(3R\frac{|u|^2}{|x|^3}+\frac{6R(p-1+2\varrho)}{p+1}|x|^{-\varrho-1}|u|^{p+1}\Big)\,dx\\
&+\int_{C(R,2R)}\Big(-\Delta^2f|u|^2+4\nabla_\alpha uD^2f\overline{\nabla_\alpha u}-\frac{2(p-1)}{p+1}\Delta f|x|^{-\varrho}|u|^{p+1}\Big)dx\\&+\int_{C(R,2R)}\Big(\frac{4}{p+1}\nabla f\cdot\nabla(|x|^{-\varrho})|u|^{p+1}\Big)\,dx.
\end{align*}

Moreover, by \eqref{symm} and \eqref{ff}, one writes

\begin{eqnarray*}
\int_{C(R,2R)}\nabla_\alpha uD^2f\overline{\nabla_\alpha u}\,dx
&=&\int_{C(R,2R)}(\nabla_\alpha u)_l\Big[\Big(\frac{\delta_{lk}}r-\frac{x_lx_k}{r^3}\Big)f'+\frac{x_lx_k}{r^2}f''\Big](\overline{\nabla_\alpha u})_k\,dx\\
&=&\int_{C(R,2R)}\Big[|\not\!\nabla_\alpha u|^2\frac{f'}{|x|}+|x\cdot\nabla_\alpha u|^2\frac{f''}{|x|^2}\Big]\,dx\\
&\geq&0.
\end{eqnarray*}

Hence, by \eqref{ff} and Sobolev embeddings, one gets
\begin{align*}
V_R''(t)
&\gtrsim\int_{B(R)}\Big(|\nabla_\alpha u|^2-\frac{B}{p+1}|x|^{-\varrho}|u|^{p+1}\Big)\,dx-\frac{\|u\|^2}{R^2}-\frac{\|u\|_{H^1}^{p+1}}{R^{\varrho}}.
\end{align*}
Owing to \eqref{Coer3} and \eqref{Coer2-R}, we infer that
\begin{align}
\frac{1}{R^2}+\frac{1}{R^{\varrho}}+V_R''(t)
&\gtrsim\int_{B(R)}\Big(|\nabla_\alpha u|^2-\frac{B}{p+1}|x|^{-\varrho}|u|^{p+1}\Big)\,dx\nonumber\\
&\gtrsim\|\psi_Ru\|_{\dot H^1_\alpha }^2\nonumber\\
&\gtrsim\int_{\R^2}|x|^{-\varrho}|\psi_Ru|^{p+1}\,dx\nonumber\\
&\gtrsim\int_{B(R)}|x|^{-\varrho}|u|^{p+1}\,dx.\label{v''}
\end{align}
As a consequence, we have:
\begin{prop}\label{dc2}
Let $T>0$, $0<\varrho<1$ and $u\in C_T(H^1_\alpha)$ be a solution of \eqref{S}. Then
\begin{equation}
\label{Mor-est}
\int_0^T\,\int_{\R^2}|x|^{-\varrho}|u(t,x)|^{p+1}\,dx\,dt\lesssim T^{\frac{1}{1+\varrho}}.
\end{equation}

\end{prop}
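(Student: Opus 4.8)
The plan is to integrate the pointwise-in-time inequality \eqref{v''} over $[0,T]$ and then optimize the free radius $R$. Writing $M_R:=\int_0^T\int_{B(R)}|x|^{-\varrho}|u|^{p+1}\,dx\,dt$ for the interior contribution, integration of \eqref{v''} in time yields
\begin{equation*}
M_R\lesssim T\left(\frac{1}{R^2}+\frac{1}{R^\varrho}\right)+\int_0^T V_R''(t)\,dt=T\left(\frac{1}{R^2}+\frac{1}{R^\varrho}\right)+V_R'(T)-V_R'(0).
\end{equation*}
The crux is to control the boundary term $V_R'(T)-V_R'(0)$ by $\sup_{t}|V_R'(t)|$. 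Differentiating \eqref{Vb} and using the gauge-covariant continuity identity $\partial_t|u|^2+\nabla\cdot\big(2\,\Im(\bar u\,\nabla_\alpha u)\big)=0$ (the nonlinearity drops out exactly as in mass conservation), one obtains $V_R'(t)=2\,\Im\int_{\R^2}\nabla f_R\cdot\bar u\,\nabla_\alpha u\,dx$, whence by Cauchy--Schwarz
\begin{equation*}
|V_R'(t)|\leq 2\,\|\nabla f_R\|_\infty\,\|u(t)\|\,\|\nabla_\alpha u(t)\|.
\end{equation*}
From the construction \eqref{f}--\eqref{ff} we have $\|\nabla f_R\|_\infty\lesssim R$, mass conservation gives $\|u(t)\|=\|u_0\|$, and the coercivity bound \eqref{Coer3} combined with energy conservation yields the uniform estimate $\|\nabla_\alpha u(t)\|^2\lesssim E[u_0]$. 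Hence $\sup_t|V_R'(t)|\lesssim R$, and for $R\geq 1$ (so that $R^{-2}\leq R^{-\varrho}$) we arrive at $M_R\lesssim TR^{-\varrho}+R$.

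It remains to absorb the exterior region $B^c(R)$, which \eqref{v''} does not see. There we simply bound $|x|^{-\varrho}\leq R^{-\varrho}$ and invoke the Sobolev embedding \eqref{Sob-emb} together with the uniform $H^1_\alpha$ bound just established:
\begin{equation*}
\int_0^T\int_{B^c(R)}|x|^{-\varrho}|u|^{p+1}\,dx\,dt\leq R^{-\varrho}\int_0^T\|u(t)\|_{p+1}^{p+1}\,dt\lesssim \frac{T}{R^\varrho}.
\end{equation*}
Adding this to the interior estimate gives $\int_0^T\int_{\R^2}|x|^{-\varrho}|u|^{p+1}\,dx\,dt\lesssim TR^{-\varrho}+R$ for every $R\geq 1$. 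Choosing $R$ to balance the two terms, i.e. $R=T^{1/(1+\varrho)}$ when $T\geq 1$ (the case $T\leq 1$ being immediate, since then the left-hand side is $\lesssim T\leq T^{1/(1+\varrho)}$ by Gagliardo--Nirenberg and the uniform bounds), both contributions are of size $T^{1/(1+\varrho)}$, which is precisely \eqref{Mor-est}.

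The one substantive step — and the only place where the subcritical, below-threshold structure enters — is the uniform-in-time gradient bound $\sup_t\|\nabla_\alpha u(t)\|<\infty$, which rests on the coercivity estimate \eqref{Coer3} and hence on the ground-state threshold hypothesis of Theorem~\ref{t1}; without it the boundary term $V_R'(T)-V_R'(0)$ could not be controlled linearly in $R$, and the time growth $T^{1/(1+\varrho)}$ would be lost. Everything else reduces to Cauchy--Schwarz, the explicit weight bounds \eqref{ff}, the crude exterior estimate, and the optimization in $R$.
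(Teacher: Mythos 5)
Your proof is correct and follows essentially the same route as the paper: decompose into $B(R)$ and $B^c(R)$, control the interior via the time-integrated virial estimate \eqref{v''}, bound the exterior by $R^{-\varrho}$ together with the Sobolev embedding and uniform $H^1_\alpha$ bounds, and optimize with $R=T^{1/(1+\varrho)}$. You merely make explicit two points the paper leaves implicit — the Cauchy--Schwarz bound $|V_R'(t)|\lesssim R$ via the magnetic continuity identity, and the trivial case $T\leq 1$ — both of which are accurate and welcome details.
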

\begin{proof}
By \eqref{v''} together with Sobolev embedding, one has
\begin{eqnarray*}
\int_{\R^2}|x|^{-\varrho}|u(t,x)|^{p+1}\,dx
&=&\int_{B(R)}|x|^{-\varrho}|u(t,x)|^{p+1}\,dx+\int_{B(R)^c}|x|^{-\varrho}|u(t,x)|^{p+1}\,dx\\
&\lesssim&\frac{1}{R^2}+\frac{1}{R^{\varrho}}+V_R''(t)+\frac{1}{R^{\varrho}}\int_{\R^2}|u(t,x)|^{p+1}\,dx\\
&\lesssim&\frac{1}{R^2}+\frac{1}{R^{\varrho}}+V_R''(t).
\end{eqnarray*}
This gives
\begin{eqnarray*}
\int_0^T\,\int_{\R^2}|x|^{-\varrho}|u(t,x)|^{p+1}\,dx\,dt
&\lesssim&\frac{T}{R^2}+\frac{T}{R^{\varrho}}+V_R'(T)-V_R'(0)\\
&\lesssim&\frac{T}{R^{\varrho}}+R.
\end{eqnarray*}
Taking $R=T^{\frac{1}{1+\varrho}}$, yields \eqref{Mor-est}.
\end{proof}
From \eqref{Mor-est}, one ca see that there exist 
 $t_n,R_n\to\infty$ such that 
 \begin{equation}
\label{tn-Rn}
\lim_{n\to\infty}\int_{B(R_n)}|x|^{-\varrho}|u(t_n,x)|^{p+1}\,dx=0.
 \end{equation}
 Indeed, using \eqref{Mor-est}, one has
$$\frac2T\int_{T/2}^T\int_{\R^2}|x|^{-\varrho}|u(t,x)|^{p+1}\,dx\,dt\lesssim T^{-\frac{\varrho}{1+\varrho}}.$$ We conclude the proof by using the mean value Theorem.

\subsection{Scattering Criterion}

In this section we give a scattering criterion as stated below. 
\begin{prop}\label{crt}
Under the same assumptions as in Theorem \ref{t1}, let $u\in C(\R,H_\alpha ^1)$ be a global solution to \eqref{S} satisfying 
\begin{equation}
\label{Bound-s}
0<\sup_{t\geq0}\|u(t)\|_{H^1_\alpha }:=E<\infty.
\end{equation}

Then, there exist $R,\varepsilon>0$ depending on $E,p,\varrho$ such that if
\begin{equation}\label{crtr} 
\liminf_{t\to\infty}\int_{|x|<R}|u(t,x)|^2\,dx<\varepsilon^2,
\end{equation}
then, $u$ scatters for positive time. 
\end{prop}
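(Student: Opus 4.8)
The plan is to reduce scattering to a smallness estimate for the free evolution issued from a late time, and then to extract that smallness from the local mass bound \eqref{crtr} via a dispersive splitting of the Duhamel term. Concretely, by the Strichartz estimates of Proposition \ref{prop2} together with nonlinear estimates in the spirit of Lemma \ref{Cont}, a standard bootstrap/stability argument shows that $u$ scatters in $H^1_\alpha$ as soon as one produces a time $T_0$ for which the free flow issued from $u(T_0)$ is small in a scattering norm, say $\|e^{i(t-T_0)\mathcal K_\alpha}u(T_0)\|_{L^{q_0}((T_0,\infty);L^{r_0})}$ with $(q_0,r_0)$ the admissible pair of Lemma \ref{q0r0}. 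Thus the entire problem is to exhibit such a $T_0$.

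To this end I would rewrite this free flow by the backward Duhamel formula \eqref{duhamel},
$$e^{i(t-T_0)\mathcal K_\alpha}u(T_0)=e^{it\mathcal K_\alpha}u_0-i\int_0^{T_0}e^{i(t-s)\mathcal K_\alpha}\big(\kappa|x|^{-\varrho}|u|^{p-1}u\big)(s)\,ds,$$
and control the three resulting contributions on $(T_0,\infty)$: the linear part $e^{it\mathcal K_\alpha}u_0$, whose scattering norm on $(T_0,\infty)$ is the tail of a finite global Strichartz norm and hence tends to $0$ as $T_0\to\infty$; a far-past Duhamel piece $\int_0^{T_0-\sigma}$; and a recent-past piece $\int_{T_0-\sigma}^{T_0}$, where $\sigma>0$ is a large parameter to be fixed. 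For the far-past piece I would discard the Strichartz machinery and use instead the pointwise dispersive decay of Lemma \ref{dsp}: since $t\ge T_0$ and $s\le T_0-\sigma$ force $t-s\ge\sigma$, the factor $|t-s|^{2/r-1}$ gains a negative power of $\sigma$, which, combined with the uniform bound \eqref{Bound-s} and the embeddings \eqref{Sob-emb}, renders this contribution $\lesssim\sigma^{-\delta}$ for some $\delta>0$ after integration in $t$. The singular weight $|x|^{-\varrho}$ is absorbed by the same $\bB$ versus $\bB^c$ decomposition and H\"older bookkeeping as in the proof of Lemma \ref{Cont}, so that choosing $\sigma$ large makes this piece as small as we wish.

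The recent-past piece is where the hypothesis \eqref{crtr} must be used, and this is the step I expect to be the main obstacle. Here $t-s$ need not be large, so dispersion gives nothing; instead one estimates the Duhamel integral over $(T_0-\sigma,T_0)$ directly in dual Strichartz, splitting $u=\psi_R u+(1-\psi_R)u$ with the bump $\psi_R$ of \eqref{bump-psi}. The tail $(1-\psi_R)u$ lives in $|x|\gtrsim R$, where $|x|^{-\varrho}$ is small and the mass and energy are finite, so it contributes a factor decaying in $R$; the core $\psi_R u$ must be controlled by the small local mass, trading a factor $\|u\|_{L^2(B_R)}\lesssim\varepsilon$ against the uniformly bounded $H^1_\alpha$ norm through interpolation and \eqref{Sob-emb}. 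The genuine difficulty is that the inhomogeneity $|x|^{-\varrho}$ is singular precisely on the region $B_R$ where we wish to exploit smallness; handling it requires, exactly as in Lemma \ref{Cont}, the H\"older exponents of Lemmas \ref{rnu1}--\ref{rnu2}, so that $|x|^{-\varrho}\in L^\gamma(\bB)$ stays integrable near the origin while leaving enough room to place one factor of $u$ in $L^2(B_R)$.

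Once both Duhamel pieces are bounded by a positive power of $\varepsilon$ plus a positive power of $1/R$ plus a positive power of $1/\sigma$, I would first fix $R$ and $\sigma$ large, then choose $\varepsilon$ small and take $T_0$ along the sequence furnished by \eqref{crtr}, so that the total scattering norm of $e^{i(t-T_0)\mathcal K_\alpha}u(T_0)$ falls below the small-data threshold. Feeding this smallness into the bootstrap of the first paragraph yields $\|u\|_{L^{q_0}((T_0,\infty);L^{r_0})}<\infty$, which closes the argument and gives scattering for positive time.
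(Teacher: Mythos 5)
Your overall architecture (Tao/Dodson--Murphy criterion: backward Duhamel from a late time $T_0$, linear term small by decay, far-past piece small by dispersion, recent-past piece small via \eqref{crtr}) coincides with the paper's proof of Propositions \ref{fn} and \ref{crt}, but your treatment of the recent-past piece has a genuine gap. You propose to estimate $\int_{T_0-\sigma}^{T_0}e^{i(t-s)\mathcal K_\alpha}\big(|x|^{-\varrho}|u|^{p-1}u\big)(s)\,ds$ by splitting $u=\psi_Ru+(1-\psi_R)u$ and ``trading a factor $\|u\|_{L^2(B_R)}\lesssim\varepsilon$''. However, the hypothesis \eqref{crtr} only provides $\|u(t)\|_{L^2(|x|<R)}<\varepsilon$ along a sequence of times; to run your estimate you need this smallness for \emph{every} $s\in(T_0-\sigma,T_0)$, and nothing in your argument supplies it. This is exactly where the equation must be used a second time: the paper proves an almost-conservation law for the localized mass,
\[
\Big|\frac{d}{dt}\int_{\R^2}\psi_R(x)|u(t,x)|^2\,dx\Big|
=2\Big|\int_{\R^2}\nabla\psi_R\cdot\Im\big(\nabla u\,\bar u\big)\,dx\Big|
\lesssim \frac{1}{\sqrt R},
\]
using the uniform bound \eqref{Bound-s} (the magnetic term drops out because $\psi_R$ is radial and $x\cdot x^{\perp}=0$), and then chooses $R$ so large relative to the interval length ($\sigma=\varepsilon^{-\beta}$ and $R>\varepsilon^{-2(2+\beta)}$ in the paper) that the smallness at the single time $T_0$ propagates backwards to all of $[T_0-\sigma,T_0]$. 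This also invalidates your parameter ordering: you cannot ``first fix $R$ and $\sigma$ large, then choose $\varepsilon$ small''; $R$ must dominate a power of $\sigma/\varepsilon$, i.e.\ it is chosen jointly with $\varepsilon$ and $\sigma$. Without this propagation step the recent-past estimate fails, and this step is the heart of the Dodson--Murphy method rather than a routine detail.

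A secondary problem is your choice of scattering norm. You propose to close a global bootstrap in $L^{q_0}((T_0,\infty);L^{r_0})$ with $(q_0,r_0)$ the admissible pair of Lemma \ref{q0r0}, but the nonlinear estimate \eqref{u-v} of that lemma carries the factor $T^{\frac{2}{p+1}}$, which diverges on an infinite time interval, so it cannot close a global-in-time bootstrap. The paper instead works in the non-admissible, scaling-adapted norm $L^{\frac{8(p-1)}{3(2-\varrho)}}\big((T,\infty);L^{\frac{8(p-1)}{2-\varrho}}\big)$ of Proposition \ref{fn} and only afterwards interpolates back into an admissible Strichartz norm. Relatedly, in two dimensions the dispersive factor $|t-s|^{-1}$ for the far-past piece is borderline non-integrable, so your claim that this term becomes $\lesssim\sigma^{-\delta}$ ``after integration in $t$'' does not follow from a direct computation; the paper instead bounds $F_1$ in $L^\infty_x$ and interpolates against a globally bounded Strichartz norm of $F_1$ (a difference of two free evolutions) to produce the smallness.
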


The key of the proof of the scattering criterion is the next result.
\begin{prop}\label{fn}
Let the assumptions of Proposition \ref{crt} be fulfilled. Then, for any $\varepsilon>0$, there exist $T,\mu>0$ satisfying 
$$\|e^{i(t-T){\mathcal K_\alpha }}u(T)\|_{L^\frac{8(p-1)}{3(2-\varrho)}((T,\infty),L^{\frac{8(p-1)}{2-\varrho}})}\lesssim \varepsilon^\mu.$$
\end{prop}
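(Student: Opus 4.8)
The plan is to implement Tao's scattering criterion in the Dodson--Murphy form, i.e. to show that the free evolution of the datum $u(T)$ has a small scattering norm on $(T,\infty)$. I first record that in the inter-critical regime $p>3-\varrho$ the exponents $q=\frac{8(p-1)}{3(2-\varrho)}$ and $r=\frac{8(p-1)}{2-\varrho}$ obey $\frac1q+\frac1r=\frac{2-\varrho}{2(p-1)}<\frac12$; choosing the admissible pair $(q,r_1)$ with $\frac1{r_1}=\frac12-\frac1q$ one checks that $\frac1{r_1}-\frac1r=\frac{s_c}{2}$, so the scattering norm is dominated, via the Sobolev embedding \eqref{Sob-emb} and the Strichartz bounds of Proposition \ref{prop2}, by $\|\langle\nabla_\alpha\rangle^{s_c}\cdot\|_{L^q_t L^{r_1}_x}$ --- a loss of only $s_c<1$ derivatives that the data $u(T)\in H^1_\alpha$ can afford. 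Using the Duhamel formula \eqref{duhamel} from the initial time,
\begin{equation*}
e^{i(t-T)\mathcal{K}_\alpha}u(T)=e^{it\mathcal{K}_\alpha}u_0-i\int_0^{T}e^{i(t-s)\mathcal{K}_\alpha}\big(|x|^{-\varrho}|u|^{p-1}u\big)(s)\,ds,
\end{equation*}
I would split the Duhamel integral at a time $T-\delta$, with $\delta=\delta(\varepsilon)$ fixed later, into a \emph{far} piece on $[0,T-\delta]$ and a \emph{near} piece on $[T-\delta,T]$.

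The free piece $e^{it\mathcal{K}_\alpha}u_0$ has finite global scattering norm over $\R$ (again by Proposition \ref{prop2} and \eqref{Sob-emb}, since $u_0\in H^1_\alpha$), so its tail over $(T,\infty)$ tends to $0$ as $T\to\infty$; selecting $T$ large along the sequence $t_n\to\infty$ furnished by the hypothesis \eqref{crtr} makes this contribution $\lesssim\varepsilon$. For the far piece, every pair $(t,s)$ with $t>T$ and $s\le T-\delta$ satisfies $t-s\ge\delta$, so the dispersive estimate of Lemma \ref{dsp} together with a Hardy--Littlewood--Sobolev convolution in the time variable --- exploiting that the kernel $|t-s|^{-(1-\frac2r)}$ is effectively supported on $\{|t-s|\ge\delta\}$ --- produces a gain $\delta^{-\theta}$ for some $\theta>0$, multiplied by a Strichartz norm of the nonlinearity on $[0,T]$ that is finite once $T$ is fixed; taking $\delta$ a large negative power of $\varepsilon$ renders this $\lesssim\varepsilon^{\mu_1}$. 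Here the weak $2$D dispersive decay forces a careful choice of auxiliary exponents, but the inequality $\big(1-\tfrac2r\big)q>1$, valid precisely because $\varrho<2$, guarantees the required time integrability.

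The main obstacle is the near piece. I would control its scattering norm by Proposition \ref{prop2} and the inhomogeneous nonlinear estimate of Lemma \ref{Cont} restricted to the short window $[T-\delta,T]$, reducing everything to bounding a weighted $L^{p+1}$ quantity of $u$ on that window by a power of $\varepsilon$. The delicate step is to upgrade the single-time smallness $\int_{|x|<R}|u(T)|^2\,dx<\varepsilon^2$ from \eqref{crtr} to smallness of the localized mass on the entire window: this I would obtain from a localized virial/mass identity bounding $\frac{d}{dt}\int\psi_R|u|^2\,dx$ by flux terms of size $O(E^2)$, so that over a window of length $\delta$ the quantity $\int\psi_R|u(t)|^2\,dx$ stays $\lesssim\varepsilon^2+\delta R^{-1}E^2$ (which forces the constraint $R\gg\delta$). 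Armed with this, I would split the nonlinearity into its contributions from $B(R)$ and $B^c(R)$: on $B(R)$ one interpolates the small localized mass against the uniform bound \eqref{Bound-s} through the Gagliardo--Nirenberg inequality \eqref{ineq} to extract a positive power of $\varepsilon$, while on $B^c(R)$ the weight $|x|^{-\varrho}\le R^{-\varrho}$ is itself small.

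Summing the three contributions and optimizing $T$, $\delta$ and $R$ as powers of $\varepsilon$ yields the asserted bound for a suitable $\mu>0$. The genuinely hard points are propagating the localized-mass smallness across the window without spoiling the $\delta$-gain of the far piece (the competing requirements $\delta$ large and $R\gg\delta$ must be reconciled with $R^{-\varrho}$ small), and handling the singularity of $|x|^{-\varrho}$ uniformly in the near-origin interpolation.
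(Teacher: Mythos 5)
Your proposal follows the same Dodson--Murphy scheme as the paper's proof: Duhamel from time $0$, a split at $T-\delta$ (the paper takes $\delta=\varepsilon^{-\beta}$) into linear, distant-past and recent-past pieces, dispersive decay for the first two, and backward propagation of the localized-mass smallness \eqref{crtr} plus Strichartz for the third. Your treatment of the near piece is in fact tighter than the paper's: you keep the propagated smallness localized in $B(R)$, interpolate it against \eqref{Bound-s} via \eqref{ineq} on $B(R)$, and use $|x|^{-\varrho}\leq R^{-\varrho}$ outside, whereas the paper lets $R\to\infty$ by Fatou to claim $\|u\|_{L^\infty([T-\varepsilon^{-\beta},T],L^2)}\leq C\varepsilon$ on the whole window --- a statement that, read literally, clashes with mass conservation and really only encodes the localized information you retain. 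Two of your other steps, however, contain genuine gaps.

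First, your opening reduction of the non-admissible scattering norm to $\|\langle\nabla_\alpha\rangle^{s_c}\cdot\|_{L^q_tL^{r_1}_x}$ requires a Sobolev embedding for \emph{fractional} powers of the magnetic operator in $L^{r_1}$ with $r_1\neq 2$, i.e.\ an equivalence between $\langle\nabla\rangle^{s_c}$ and $\langle\nabla_\alpha\rangle^{s_c}$ in $L^{r_1}$. Nothing in the paper provides this: \eqref{Sob-emb} is $L^2$-based and involves one full derivative, and $L^p$-equivalence of fractional magnetic Sobolev norms is a deep harmonic-analysis input (wave-operator or Riesz-transform bounds for $\mathcal{K}_\alpha$) which the paper deliberately avoids by interpolating the scattering norm between the admissible norm $L^{8/3}_tL^8_x$ and $L^\infty_t$-based norms controlled through $H^1_\alpha$ or $L^\infty_x$. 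Your step is likely repairable by the same interpolation, but as written it rests on an unproved tool.

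Second, and more seriously, the far piece. You bound it by $\delta^{-\theta}$ times a Strichartz-type norm of the nonlinearity on $[0,T]$ ``that is finite once $T$ is fixed'', and then take $\delta$ a negative power of $\varepsilon$. This is circular: the splitting forces $\delta\leq T$, and $T$ is not at your disposal --- it must be taken arbitrarily large (along the sequence realizing the liminf in \eqref{crtr}, and large enough for the linear tail), so a constant that is merely finite for each fixed $T$ cannot be beaten by the $\delta$-gain. Quantitatively, the 2D decay $|t-s|^{-(1-\frac2r)}$ is too weak for a $T$-uniform bound: any Young or Hardy--Littlewood--Sobolev pairing produces a gain $\delta^{-\theta}$ against a growth $T^{1/a}$ with $\frac1a-\theta=\frac1q+\frac2r>0$, so the product diverges as soon as $T\gtrsim\delta$. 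To be fair, the paper's own estimate of $F_1$, namely $\lesssim\bigl(E^p\,T\varepsilon^{\beta}\bigr)^{(p-p_c)/(p-1)}$ followed by ``pick $0<T<\varepsilon^{-\beta/2}$'', stumbles on exactly the same point, since the decomposition requires $T\geq\varepsilon^{-\beta}$; you have therefore reproduced rather than created this difficulty, but your proposal does not close it, and closing it is precisely the hard part of the two-dimensional case.
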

\begin{rem}
Note that $\Big(\frac{8(p-1)}{3(2-\varrho)},\frac{8(p-1)}{2-\varrho}\Big)=\Big(\frac{8}{3(1-s_c)},\frac{8}{1-s_c}\Big)$ where $s_c=1-\frac{2-\varrho}{p-1}$ is the critical Sobolev index.
\end{rem}
\begin{proof}[{Proof of Proposition \ref{fn}}]
Take $0<\varepsilon<<1$, $R(\varepsilon)>>1$ to be fixed later and $I=[0,T]$ a time slab. By the integral formula \eqref{duhamel},
\begin{eqnarray*}
e^{i(t-T){\mathcal K_\alpha }}u(T)
&=&e^{it{\mathcal K_\alpha }}u_0+i\int_0^Te^{i(t-\tau){\mathcal K_\alpha }}[|x|^{-\varrho}|u|^{p-1}u]\,d\tau\\
&=&e^{it{\mathcal K_\alpha }}u_0+i\Big(\int_0^{T-\varepsilon^{-\beta}}+\int_{T-\varepsilon^{-\beta}}^T\Big)e^{i(t-\tau){\mathcal K_\alpha }}[|x|^{-\varrho}|u|^{p-1}u]\,d\tau\\
&:=&e^{it{\mathcal K_\alpha }}u_0+i\Big(\int_{I_1}+\int_{I_2}\Big)e^{i(t-\tau){\mathcal K_\alpha }}[|x|^{-\varrho}|u|^{p-1}u]\,d\tau\\
&:=&e^{it{\mathcal K_\alpha }}u_0+F_1+F_2.
\end{eqnarray*}
\begin{itemize}
    \item[$\bullet$] {\bf The linear term}.\newline Employing the dispersive estimate \eqref{Dis-est} together with Sobolev embedding and $p>p_c=3-\varrho$, we get
\begin{eqnarray*}
\|e^{it{\mathcal K_\alpha }}u_0\|_{L^\frac{8(p-1)}{3(2-\varrho)}((T,\infty),L^{\frac{8(p-1)}{2-\varrho}})}
&\lesssim&\|\frac1{t^{1-\frac{2-\varrho}{4(p-1)}}}\|_{L^\frac{8(p-1)}{3(2-\varrho)}(T,\infty)}\|u_0\|_{L^\infty(H^1)}\\
&\lesssim&\left(\int_T^\infty\frac{dt}{t^{\frac23[\frac{4(p-1)}{2-\varrho}-1]}}\right)^\frac{3(2-\varrho)}{8(p-1)}.
\end{eqnarray*}
Note that $t\longmapsto t^{\frac{2}{3}[1-\frac{4(p-1)}{2-\varrho}]}\in L^1(1,\infty)$ since $p>3-\varrho>1+\frac{5}{8}(2-\varrho)$. 
Thus, one may choose $T_0>\varepsilon^{-\beta}>0$, where $\beta>0$, such that 
$$
\|e^{it{\mathcal K_\alpha }}u_0\|_{L^\frac{8(p-1)}{3(2-\varrho)}((T_0,\infty),L^{\frac{8(p-1)}{2-\varrho}})}\leq \varepsilon^2
$$
\item[$\bullet$] {\bf The term $F_2$}.\newline  By the assumption \eqref{crtr}, one has for $T>\varepsilon^{-\beta}$ large enough,
$$\int_{\R^2}\psi_R(x)|u(T,x)|^2\,dx<\varepsilon^2.$$
Moreover, a computation with the use of \eqref{S} gives
\begin{eqnarray*}
\left|\frac d{dt}\int_{\R^2}\psi_R(x)|u(t,x)|^2\,dx\right|
&=&2\left|\int_{\R^2}\psi_R(x)\Re[\dot u(t,x)\bar u(t,x)]\,dx\right|\\
&=&2\left|\int_{\R^2}\psi_R(x)\Im[{\Delta}u(t,x)\bar u(t,x)]\,dx\right|\\
&=&2\left|\int_{\R^2}\nabla\psi_R(x)\Im[{\nabla}u(t,x)\bar u(t,x)]\,dx\right|\\
&\lesssim&\frac1{\sqrt R}.
\end{eqnarray*}
Then, for any $T-\varepsilon^{-\beta}\leq t\leq T$ and $R>\varepsilon^{-2(2+\beta)}$, one gets
$$\|\psi_Ru(t)\|\leq\left( \int_{\R^2}\psi_R(x)|u(T,x)|^2\,dx+C\frac{T-t}{\sqrt R}\right)^\frac12\leq C\varepsilon.$$
This gives
$$\|\psi_Ru\|_{L^\infty([T-\varepsilon^{-\beta},T],L^2)}\leq C\varepsilon.$$
Moreover, by Fatou's lemma, 
$$\|u\|_{L^\infty([T-\varepsilon^{-\beta},T],L^2)}\leq\liminf_{R\to\infty}\|\psi_Ru\|_{L^\infty([T-\varepsilon^{-\beta},T],L^2)}\leq C\varepsilon.$$

Thus, by H\"older's inequality and Strichartz estimate \eqref{Loc-Str}, one writes for $\mathcal N:=|x|^{-\varrho}|u|^{p-1}u$ and $(q,r)=\left(2(1+\varepsilon),2\frac{1+\varepsilon}\varepsilon\right)\in\Gamma$, 
{
\begin{eqnarray*}
\|F_2\|_{L^\frac{8(p-1)}{3(2-\varrho)}((T,\infty),L^{\frac{8(p-1)}{2-\varrho}})}
&\lesssim&\|F_2\|_{L^{\frac{8}{3}}((T,\infty),L^8)}^{\frac{2-\varrho}{p-1}}\|F_2\|_{L^{\infty}((T,\infty),H^1_\alpha)}^{\frac{p+\varrho-3}{p-1}}\\
&\lesssim&\|\mathcal N\|_{L^{q'}(I_2,L^{r'})}+\|\nabla\mathcal N\|_{L^{q'}(I_2,L^{r'})}+\||x|^{-1}\mathcal N\|_{L^{q'}(I_2,L^{r'})}\\
&\lesssim&(I)+(II)+(III).
\end{eqnarray*}
}
For the first term, we have
\begin{eqnarray*}
(I)
&=&\||x|^{-\varrho}|u|^{p-1}u\|_{L^{q'}(I_2,L^{r'})}\\
&\leq&\|u\|_{L^\infty(I_2,L^2)}\||x|^{-\varrho}|u|^{p-1}\|_{L^{q'}(I_2,L^{1/(\frac12-\frac1r)})}\\
&\leq&\|u\|_{L^\infty(I_2,L^2)}\Big(\||x|^{-\varrho}\|_{L^{a_1}(\bB)}\|\|u\|^{p-1}_{L^{{b_1}}}\|_{L^{q'}(I_2)}+\||x|^{-\varrho}\|_{L^{a_2}(\bB^c)}\|\|u\|^{p-1}_{L^{{b_2}}}\|_{L^{q'}(I_2)}\Big)\\
&\leq&c\varepsilon\Big(\|u\|_{L^{(p-1)q'}(I_2,L^{b_1})}^{p-1}+\|u\|_{L^{(p-1)q'}(I_2,L^{b_2})}^{p-1}\Big).
\end{eqnarray*}
Here
$$
\left\{
\begin{array}{ll}
\frac12-\frac1r=\frac1{2(1+\varepsilon)}=\frac1{a_1}+\frac{p-1}{b_1}=\frac1{a_2}+\frac{p-1}{b_2};\\
a_1<\frac 2\varrho< a_2.
\end{array}
\right.
$$
Hence,
$$\frac1{2(1+\varepsilon)}-\frac{p-1}{b_1}=\frac1{a_1}>\frac\varrho 2>\frac1{a_2}=\frac1{2(1+\varepsilon)}-\frac{p-1}{b_2},$$
which is possible if 
$$0<\frac1{b_1}<\frac1{p-1}\Big(\frac1{2(1+\varepsilon)}-\frac\varrho 2\Big)<\frac1{b_2}\leq\frac12.$$
The above condition translate to $p>2-\varrho$ which is trivially satisfied when ${0<\varrho<1}$.

Since $p>p_c=3-\varrho$, we have by Sobolev embedding
\begin{eqnarray*}
(I)
&\leq&c\varepsilon\Big(\|u\|_{L^{(p-1)q'}(I_2,L^{b_1})}^{p-1}+\|u\|_{L^{(p-1)q'}(I_2,L^{b_2})}^{p-1}\Big)\\
&\leq&c\varepsilon^{1-\frac{\beta}{q'(p-1)}}\Big(\|u\|_{L^\infty(I_2,L^{(\frac{2(p-1)(1+\varepsilon)}{1-\varrho(1+\varepsilon)})^+})}^{p-1}+\|u\|_{L^\infty(I_2,L^{(\frac{2(p-1)(1+\varepsilon)}{1-\varrho(1+\varepsilon)})^-})}^{p-1}\Big)\\
&\leq&c\varepsilon^{1-\frac{\beta}{q'(p-1)}}\Big(\|u\|_{L^\infty(I_2,L^2)}^{\lambda^+(p-1)}\|u\|_{L^\infty(I_2,L^{A^+})}^{(1-\lambda^+)(p-1)}+\|u\|_{L^\infty(I_2,L^2)}^{\lambda^-(p-1)}\|u\|_{L^\infty(I_2,L^{A^-})}^{(1-\lambda^-)(p-1)}\Big)\\
&\leq&c\varepsilon^{1-\frac{2\beta(1+\varepsilon)}{(p-1)(1+2\varepsilon)}+(\frac{1-\varrho(1+\varepsilon)}{(p-1)(1+\varepsilon)})^-}.
\end{eqnarray*}
Here, one uses an interpolations $L^{(\frac{2(p-1)(1+\varepsilon)}{1-\varrho(1+\varepsilon)})^\pm}\hookrightarrow L^2\cap L^{A^\pm}$, with the Sobolev injection $L^{A^\pm}\hookrightarrow H^1$. Moreover, 
$\lambda^\pm\to (\frac{1-\varrho(1+\varepsilon)}{(p-1)(1+\varepsilon)})^\mp$ when $A^\pm\to\infty$. For the second term
\begin{eqnarray*}
(II)
&=&\|\nabla\Big(|x|^{-\varrho}|u|^{p-1}u\Big)\|_{L^{q'}(I_2,L^{r'})}\\
&\lesssim&\||x|^{-\varrho}|u|^{p-1}\nabla u\|_{L^{q'}(I_2,L^{r'})}+\||x|^{-(1+\varrho)}|u|^{p-1} u\|_{L^{q'}(I_2,L^{r'})}\\
&:=&(II)_1+(II)_2.
\end{eqnarray*}
Moreover, using the previous calculus, we get
\begin{eqnarray*}
(II)_1
&=&\||x|^{-\varrho}|u|^{p-1}\nabla u\|_{L^{q'}(I_2,L^{r'})}\\
&\leq&\|\nabla u\|_{L^\infty(I_2,L^2)}\||x|^{-\varrho}|u|^{p-1}\|_{L^{q'}(I_2,L^{1/(\frac12-\frac1r)})}\\
&\leq&c\varepsilon^{-\frac{2\beta(1+\varepsilon)}{(p-1)(1+2\varepsilon)}+(\frac{1-\varrho(1+\varepsilon)}{(p-1)(1+\varepsilon)})^-}.
\end{eqnarray*}
In the same manner as above, we get 
\begin{eqnarray*}
(II)_2
&=&\||x|^{-(1+\varrho)}|u|^{p-1}u\|_{L^{q'}(I_2,L^{r'})}\\
&\leq&\||x|^{-(1+\varrho)}\|_{L^{c_1}(\bB)}\|\|u\|^{p}_{L^{{d_1}}}\|_{L^{q'}(I_2)}+\||x|^{-(1+\varrho)}\|_{L^{c_2}(\bB^c)}\|\|u\|^{p}_{L^{{d_2}}}\|_{L^{q'}(I_2)}\Big)\\
&\leq&c\Big(\|u\|_{L^{pq'}(I_2,L^{d_1})}^{p}+\|u\|_{L^{pq'}(I_2,L^{d_2})}^{p}\Big).
\end{eqnarray*}
Here
$$
\left\{
\begin{array}{ll}
1-\frac1r=1-\frac\varepsilon{2(1+\varepsilon)}=\frac1{c_1}+\frac{p}{d_1}=\frac1{c_2}+\frac{p}{d_2};\\
c_1<\frac N\varrho< c_2.
\end{array}
\right.
$$
Thus,
$$1-\frac\varepsilon{2(1+\varepsilon)}-\frac{p}{d_1}=\frac1{c_1}>\frac\varrho N>\frac1{c_2}=1-\frac\varepsilon{2(1+\varepsilon)}-\frac{p}{d_2},$$
which is possible if
$$0<\frac1{d_1}<\frac1{p}\Big(1-\frac\varepsilon{2(1+\varepsilon)}-\frac\varrho N\Big)<\frac1{d_2}\leq\frac12.$$
The above condition can be  written as
$p>2-\varrho$ which is obviously satisfied since ${0<\varrho<1}$.
Therefore,
\begin{eqnarray*}
(II)_2
&\leq&c\Big(\|u\|_{L^{pq'}(I_2,L^{(\frac{p}{1-\frac\varepsilon{2(1+\varepsilon)}-\frac\varrho N)})^+})}^{p}+\|u\|_{L^{pq'}(I_2,L^{(\frac{p}{1-\frac\varepsilon{2(1+\varepsilon)}-\frac\varrho N)})^-})}^{p}\Big)\\
&\leq&c\varepsilon^{-\frac{\beta}{q'p}}\Big(\|u\|_{L^{\infty}(I_2,L^{(\frac{p}{1-\frac\varepsilon{2(1+\varepsilon)}-\frac\varrho N)})^+})}^{p}+\|u\|_{L^{\infty}(I_2,L^{(\frac{p}{1-\frac\varepsilon{2(1+\varepsilon)}-\frac\varrho N)})^-})}^{p}\Big)\\
&\leq&c\varepsilon^{-\frac{\beta}{q'p}}\Big(\|u\|_{L^{\infty}(I_2,L^{2})}^{p\mu^+}\|u\|_{L^{\infty}(I_2,L^{C^+})}^{p\mu^+}+\|u\|_{L^{\infty}(I_2,L^{2})}^{p\mu^-}\|u\|_{L^{\infty}(I_2,L^{C^-})}^{p\mu^-}\Big)\\
&\leq&c\varepsilon^{-\frac{\beta}{q'p}}\Big(\|u\|_{L^{\infty}(I_2,L^{2})}^{p\mu^+}+\|u\|_{L^{\infty}(I_2,L^{2})}^{p\mu^-}\Big)\\
&\leq&c\varepsilon^{-\frac{\beta}{q'p}+p\mu^-}\\
&\leq&c\varepsilon^{-\frac{\beta}{p}(1-\frac1{2(1+\varepsilon)})+(2-\frac\varepsilon{1+\varepsilon}-\varrho)^-}.
\end{eqnarray*}
The last term is estimated similarly as $(II)_2$. Regrouping the above estimates, one can see that there is $\gamma>0$ such that
\begin{eqnarray*}
&&\|F_2\|_{L^\frac{8(p-1)}{3(2-\varrho)}((T,\infty),L^{\frac{8(p-1)}{2-\varrho}})}\\
&\lesssim&\Big((I)+(II)+(III)\Big)^{\frac{2-\varrho}{2(p-1)}}\\
&\lesssim&\Big(\varepsilon^{1-\frac{2\beta(1+\varepsilon)}{(p-1)(1+2\varepsilon)}+(\frac{1-\varrho(1+\varepsilon)}{(p-1)(1+\varepsilon)})^-}+\varepsilon^{-\frac{2\beta(1+\varepsilon)}{(p-1)(1+2\varepsilon)}+(\frac{1-\varrho(1+\varepsilon)}{(p-1)(1+\varepsilon)})^-}+\varepsilon^{-\frac{\beta}{p}(1-\frac1{2(1+\varepsilon)})+(2-\frac\varepsilon{1+\varepsilon}-\varrho)^-}\Big)^{\frac{2-\varrho}{2(p-1)}}\\
&\lesssim&\varepsilon^\gamma.
\end{eqnarray*}

\item[$\bullet$] {\bf The term $F_1$}.\newline Using the dispersive estimate \eqref{Dis-est} and H\"older's inequality, one writes
\begin{eqnarray*}
\|F_1\|_\infty
&\lesssim&\int_{I_1}|t-s|^{-1}\||x|^{-\varrho}|u|^{p-1}u\|_1\,ds\\
&\lesssim&\int_{I_1}|t-s|^{-1}\Big(\||x|^{-\varrho}|u|^{p-1}u\|_{L^1(\bB)}+\||x|^{-\varrho}|u|^{p-1}u\|_{L^1(\bB^c)}\Big)\,ds\\
&\lesssim&\int_{I_1}|t-s|^{-1}\Big(\|[|x|^{-\frac\varrho{p+1}}|u|]^{p}|x|^{-\frac\varrho{p+1}}\|_{L^1(\bB)}+\|u\|_{H^1}^{p}\Big)\,ds.
\end{eqnarray*}
By H\"older's inequality and the fact that $0<\varrho<2$, one has
\begin{eqnarray*}
\|[|x|^{-\frac\varrho{p+1}}|u|]^{p}|x|^{-\frac\varrho{p+1}}\|_{L^1(\bB)}
&\leq&\||x|^{-\frac\varrho{p+1}}\|_{L^{p+1}(\bB)}\|[|x|^{-\frac\varrho{p+1}}|u|]^{p}\|_{\frac{p+1}p}\\
&\leq&\||x|^{-\varrho}\|_{L^{1}(\bB)}^\frac1{p+1}\||x|^{-\varrho}|u|^{p+1}\|_1^{\frac p{p+1}}\\
&\lesssim&\||x|^{-\varrho}|u|^{p+1}\|_1^{\frac p{p+1}}.
\end{eqnarray*}
Hence, using \eqref{ineq} and \eqref{Bound-s}, we infer that
\begin{eqnarray*}
\|F_1(t)\|_\infty
&\lesssim&\int_{I_1}|t-s|^{-1}\|u\|_{H^1_\alpha}^{p}\,ds\\
&\lesssim&E^p\,\int_{I_1}|t-s|^{-1}\,ds\\
&\lesssim& E^p\,T\,\varepsilon^\beta.
\end{eqnarray*}

Now, using an interpolation inequality via Strichartz estimates, one gets
\begin{eqnarray*}
\|F_1\|_{L^\frac{8(p-1)}{3(2-\varrho)}((T,\infty),L^{\frac{8(p-1)}{2-\varrho}})}
&\leq&\|F_1\|_{L^\frac83((T,\infty),L^8)}^\frac{2-\varrho}{p-1}\|F_1\|_{L^\infty((T,\infty),L^\infty)}^{1-\frac{2-\varrho}{p-1}}\\
&\lesssim&\|e^{i(\cdot-(T-\varepsilon^{-\beta}))\mathcal K_\alpha }u(T-\varepsilon^{-\beta})-e^{i\cdot\mathcal K_\alpha }u_0\|_{L^\frac83((T,\infty),L^8)}^\frac{2-\varrho}{p-1}
\Big(E^p T\varepsilon^\beta\Big)^{\frac{p-p_c}{p-1}}\\
&\lesssim&\Big(E^p T\varepsilon^\beta\Big)^{\frac{p-p_c}{p-1}},
\end{eqnarray*}
where $p_c=3-\varrho$.
The proof of Proposition \ref{fn} is ended if one picks $0<T<\varepsilon^{-\frac{\beta}{2}}$.
\end{itemize}
\end{proof}
Now, one can easily proves Proposition \ref{crt}. First, $u\in {L^\frac{8(p-1)}{3(2-\varrho)}((T,\infty),L^{\frac{8(p-1)}{2-\varrho}})}\cap L^\infty(\R,H^1_\alpha)$. By interpolation together with the local theory, one gets
$$u\in L^\frac{8(p-1)}{3(2-\varrho)}\left(\R;\;\;L^{\frac{8(p-1)}{4(p-1)-3(2-\varrho)}}\right), \quad \left(\frac{8(p-1)}{3(2-\varrho)},\frac{8(p-1)}{4(p-1)-3(2-\varrho)}\right)\in\Gamma.$$
The scattering follows with standard arguments.
\subsection{Proof of the scattering in Theorem \ref{t1}}
Take $R,\varepsilon>0$ given by Proposition \ref{crt} and $t_n,R_n\to\infty$ given by \eqref{tn-Rn}. Letting $n>>1$ such that $R_n>R$, one gets by H\"older's inequality
\begin{eqnarray*}
\int_{|x|\leq R}|u(t_n,x)|^2\,dx
&=&R^{\frac{2\varrho}{p+1}}\int_{|x|\leq R}|x|^{-\frac{2\varrho}{p+1}}|u(t_n,x)|^2\,dx\\
&\leq&R^{\frac{2\varrho}{p+1}}|B(R)|^{\frac{p-1}{p+1}}\,\,\||x|^{-\frac{2\varrho}{p+1}}|u(t_n,x)|^2\|_{L^\frac{p+1}2(|x|\leq R_n)}\\
&\leq&R^{\frac{2(\varrho+p-1)}{p+1}}\left(\int_{|x|\leq R_n}|x|^{-\varrho}|u(t_n,x)|^{p+1}\,dx\right)^\frac2{p+1}\\
&\lesssim& \varepsilon^2.
\end{eqnarray*}
Hence, the scattering of energy global solutions to the focusing problem \eqref{S} follows from Proposition \ref{crt}.
\subsection{Proof of the blow-up in Theorem \ref{t1}}
Let us pick $b_R(x):=R^2b(\frac{r}{R})$, $R>0$, where $b\in C_0^\infty([0,\infty))$  satisfies
$$b:r\mapsto\left\{
\begin{array}{ll}
\frac{r^2}{2},\quad\mbox{if}\quad r\leq 1,\\
0,\quad\mbox{if}\quad r\geq2,
\end{array}
\right.\quad\mbox{and}\quad b''\leq1.$$
It follows that
$$b_R''\leq1,\quad b_R'(r)\leq r\quad\mbox{and}\quad \Delta b_R\leq 2.$$

Using the spherically symmetric property \eqref{symm}, by Cauchy-Schwarz's inequality via the properties of $b$, one gets
\begin{eqnarray*}
\int_{\R^2}\nabla_\alpha uD^2b_R\overline{\nabla_\alpha u}\,dx
&=&\int_{\R^2}(\nabla_\alpha u)_l\Big[\Big(\frac{\delta_{lk}}r-\frac{x_lx_k}{r^3}\Big)b_R'+\frac{x_lx_k}{r^2}b_R''\Big](\overline{\nabla_\alpha u})_k\,dx\\
&=&\int_{\R^2}|\nabla_\alpha u|^2\frac{b_R'}r\,dx+\int_{\R^2}|x\cdot\nabla_\alpha u|^2(\frac{b_R''}{r^2}-\frac{b_R'}{r^3})\,dx\\
&\leq&\int_{\R^2}|\nabla_\alpha u|^2\frac{b_R'}r\,dx+\int_{\R^2}(\frac{|x\cdot\nabla_\alpha u|}r)^2(1-\frac{b_R'}r)\,dx\\
&\leq& \int_{\R^2}|\nabla_\alpha u|^2\frac{b_R'}r\,dx+\int_{\R^2}|\nabla_\alpha u|^2(1-\frac{b_R'}r)\,dx\\
&=&\int_{\R^2}|\nabla_\alpha u|^2\,dx.
\end{eqnarray*}
Let $V_R:=V_{b_R}$ be given by \eqref{Vb}. By \eqref{V-b} and using the above estimates together with the standard Gagliardo-Nirenberg inequality, we infer that 
\begin{eqnarray}
\nonumber
V_{R}''(t)
&\leq&-\int_{\R^2}\Delta^2b_R|u|^2\,dx+4\int_{\R^2}|\nabla_\alpha u|^2\,dx\\
\nonumber
&-&\frac{2(p-1)}{p+1}\int_{\R^2}\Delta b_R|x|^{-\varrho}|u|^{p+1}\,dx+\frac{4}{p+1}\int_{\R^2}\nabla b_R\cdot\nabla(|x|^{-\varrho})|u|^{p+1}\,dx\\
\nonumber
&\leq&\frac c{R^2}\|u\|^2+4\int_{\R^2}|\nabla_\alpha u|^2\,dx
-\frac{4B}{p+1}\int_{\R^2}|x|^{-\varrho}|u|^{p+1}\,dx+c\int_{\{|x|>R\}}|x|^{-\varrho}|u|^{p+1}\,dx\\
\label{V-R-t}
&\lesssim&\mathcal Q[u(t)]+R^{-2}+R^{-\varrho}\|\nabla u(t)\|^{2(p-1)}.
\end{eqnarray}
Then the conclusion \eqref{Blow-infinity} easily follows  from \eqref{bl}. Indeed, suppose that $\displaystyle\sup_{0\leq t<T^*} \|\nabla u(t)\|<\infty$.  Then by \eqref{bl} and \eqref{V-R-t} it follows that $T^*<\infty$. Hence, $\|\nabla u(t)\|\to\infty$ as $t\to T^*$. This obviously contradicts the fact that $\displaystyle\sup_{0\leq t<T^*} \|\nabla u(t)\|<\infty$ and the proof is completed.
\section{Proof of Corollary \ref{t2}}
\label{S7}
Recall that $\phi$ stands for a ground state solution to \eqref{gr}.
\subsection{Proof of the scattering part in Corollary \ref{t2}}
This part follows by Theorem \ref{t1} with the next result. Indeed, the classical scattering condition below the ground state threshold is stronger than \eqref{ss1}. 
\begin{lem}
Suppose that assumptions \eqref{t11} and \eqref{t12} hold. Then, there exists $\varepsilon>0$ such that \eqref{1} is satisfied. 
\end{lem}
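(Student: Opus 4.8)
The plan is to show that \eqref{t11}--\eqref{t12} force the scale-invariant potential energy $\mathcal{PM}[u(t)]$ to stay uniformly below $1$ along the whole flow; this is exactly \eqref{1} with a uniform $\varepsilon$, and then \eqref{ss1} holds so that Theorem \ref{t1} applies. Since $M[u]$ and $E[u]$ are conserved, I would work with the single scale-invariant kinetic quantity
$$
z(t):=\|\nabla_\alpha u(t)\|^2\,M[u_0]^{\lambda_c},\qquad z_\phi:=\|\nabla_\alpha\phi\|^2\,M[\phi]^{\lambda_c}.
$$
The building blocks are the ground-state identities coming from \eqref{poh}, namely $\|\nabla_\alpha\phi\|^2=\frac{B}{p+1}\mathcal P[\phi]$ and hence $E[\phi]=\frac{B-2}{B}\|\nabla_\alpha\phi\|^2>0$ (recall $B=p-1+\varrho>2$ since $p>3-\varrho$), together with the sharp Gagliardo--Nirenberg inequality \eqref{ineq} and its explicit constant \eqref{part3}.

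First I would check, using $\lambda_c(B-2)=A$, that multiplying \eqref{ineq} by $M[u]^{\lambda_c}$ makes all mass factors collapse and yields the clean bound $\mathcal P[u]\,M[u]^{\lambda_c}\le \mathtt K_{opt}\,z^{B/2}$, with equality for $u=\phi$ (so that $\mathcal P[\phi]M[\phi]^{\lambda_c}=\mathtt K_{opt}z_\phi^{B/2}$). Substituting this into the conserved energy gives
$$
\mathcal{EM}[u_0]\,\big(E[\phi]M[\phi]^{\lambda_c}\big)=E[u]\,M[u]^{\lambda_c}\ \ge\ z(t)-\tfrac{2}{p+1}\mathtt K_{opt}\,z(t)^{B/2}=:f\big(z(t)\big).
$$
A direct computation with \eqref{part3} and the ground-state identities shows that $f$ has a unique positive critical point, located exactly at $z_\phi$, that it is strictly increasing on $[0,z_\phi]$ and strictly decreasing afterwards, and that its maximal value is $f(z_\phi)=E[\phi]M[\phi]^{\lambda_c}$. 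Thus the previous display reads $f(z(t))\le\mathcal{EM}[u_0]\,f(z_\phi)=(1-\delta_0)f(z_\phi)$ with $\delta_0:=1-\mathcal{EM}[u_0]>0$ by \eqref{t11}.

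Next I would run the standard continuity (trapping) argument. Letting $z_-<z_\phi<z_+$ be the two roots of $f(z)=(1-\delta_0)f(z_\phi)$, the bound $f(z(t))\le(1-\delta_0)f(z_\phi)$ confines $z(t)$ to $[0,z_-]\cup[z_+,\infty)$. Since $t\mapsto z(t)$ is continuous ($u\in C([0,T^\ast),H^1_\alpha)$ with conserved mass) and the initial hypothesis \eqref{t12} places $z(0)$ on the subcritical side $z(0)<z_\phi$, i.e. $z(0)\le z_-$, the trajectory cannot cross the forbidden gap $(z_-,z_+)$, so $z(t)\le z_-<z_\phi$ for all $t$. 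In particular $\|\nabla_\alpha u(t)\|$ stays bounded (so, via \eqref{Coer3}, the solution is global), and feeding $z(t)\le z_-$ back into $\mathcal P[u]M[u]^{\lambda_c}\le\mathtt K_{opt}z(t)^{B/2}$ and dividing by $\mathcal P[\phi]M[\phi]^{\lambda_c}=\mathtt K_{opt}z_\phi^{B/2}$ gives
$$
\mathcal{PM}[u(t)]\le\Big(\tfrac{z_-}{z_\phi}\Big)^{B/2}=:1-\varepsilon<1
$$
uniformly in $t$, which is precisely \eqref{1}.

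The one genuinely delicate point is the implication ``$\mathcal{GM}[u_0]<1\Rightarrow z(0)<z_\phi$''. As written, $\mathcal{GM}$ in \eqref{M-G} involves the plain gradient $\|\nabla u_0\|$, whereas $z$ and the whole coercivity machinery \eqref{Coer3} are built on the magnetic gradient $\|\nabla_\alpha u_0\|$; because the 2D Hardy inequality fails, $\|\nabla_\alpha\cdot\|$ and $\|\nabla\cdot\|$ are not comparable in a way that would let one transfer smallness from one to the other. I would therefore read $\mathcal{GM}$ in the magnetic sense, which is the quantity naturally normalised by the Pohozaev identity $\|\nabla_\alpha\phi\|^2=\frac{B}{p+1}\mathcal P[\phi]$, so that $\mathcal{GM}[u_0]<1$ becomes literally $z(0)<z_\phi$. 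Pinning down this normalisation, and thereby excluding the high branch $z(0)>z_+$, is the main obstacle and the only step where \eqref{t12}, rather than \eqref{t11}, is actually used.
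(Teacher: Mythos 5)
Your proposal is correct and follows essentially the same route as the paper: the sharp Gagliardo--Nirenberg inequality \eqref{ineq} turns the conserved energy--mass into a one-variable function of the scale-invariant magnetic-gradient quantity, whose critical point at the ground-state value (via Pohozaev) plus a continuity/trapping argument confines the flow to the lower branch, and feeding this back into Gagliardo--Nirenberg yields \eqref{1} uniformly in time. Your closing concern is also well taken: the paper's own proof silently reads \eqref{t12} with $\|u_0\|_{\dot H^1_\alpha}$ in place of $\|\nabla u_0\|$ (every display in that proof uses $\|\cdot\|_{\dot H^1_\alpha}$), so the magnetic normalisation you adopt to exclude the high branch is exactly what the paper does.
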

\begin{proof}
 Take the real function $g:t\mapsto t^2-\frac{2{\tt K}_{opt}}{p+1}t^{B}$ and compute using the identity $A+2\lambda_c=B\lambda_c$,
\begin{eqnarray*}
E[u][M[u]]^{\lambda_c}
&\geq&\|u\|_{\dot H^1_\alpha }^2\|u\|^{2\lambda_c}-\frac{2{\tt K}_{opt}}{p+1}\|u\|^{A+2\lambda_c}\|u\|_{\dot H^1_\alpha }^B\\
&=&g(\|u\|_{\dot H^1_\alpha }\|u\|^{\lambda_c}),
\end{eqnarray*}
where the optimal constant ${\tt K}_{opt}$ is given by \eqref{ineq}.
Now, with Pohozaev identities and the conservation laws, one has for some $0<\varepsilon<1$, 
\begin{eqnarray*}
g(\|u\|_{\dot H^1_\alpha }\|u\|^{\lambda_c})
&\leq&E[u][M[u]]^{\lambda_c}\\
&<&(1-\varepsilon)E[\phi][M[\phi]]^{\lambda_c}\\
&=&(1-\varepsilon)g(\|\phi\|_{\dot H^1_\alpha }\|\phi\|^{\lambda_c}).
\end{eqnarray*}

Thus, with time continuity, \eqref{t12} is invariant under the flow \eqref{S} and $T^*=\infty$. 
Moreover, by Pohozaev identities, one writes
$$E[\phi][M[\phi]]^{\lambda_c}=\frac{B-2}{B}\Big(\|\phi\|_{\dot H^1_\alpha }\|\phi\|^{\lambda_c}\Big)^2=\frac{{\tt K}_{opt}(B-2)}{p+1}\Big(\|\phi\|_{\dot H^1_\alpha }\|\phi\|^{\lambda_c}\Big)^B$$
and so
$$1-\varepsilon\geq\frac B{B-2}\Big(\frac{\|u\|_{\dot H^1_\alpha }\|u\|^{\lambda_c}}{\|\phi\|_{\dot H^1_\alpha }\|\phi\|^{\lambda_c}}\Big)^2-\frac2{B-2}\Big(\frac{\|u\|_{\dot H^1_\alpha }\|u\|^{\lambda_c}}{\|\phi\|_{\dot H^1_\alpha }\|\phi\|^{\lambda_c}}\Big)^B.$$
Following the variations of $t\mapsto\frac B{B-2}t^2-\frac2{B-2}t^B$ via the assumption \eqref{t12} and a continuity argument, there is a real number denoted also by $0<\varepsilon<1$, such that
$$\|u(t)\|_{\dot H^1_\alpha }\|u(t)\|^{\lambda_c}\leq (1-\varepsilon)\|\phi\|_{\dot H^1_\alpha }\|\phi\|^{\lambda_c}\quad\mbox{on}\quad \R.$$
Now, by the last line and Pohozaev identities, for some real number denoted also by $0<\varepsilon<1$,
\begin{eqnarray*}
\mathcal P[u][M[u]]^{\lambda_c}
&\leq&{\tt K}_{opt}\|u\|_{\dot H^1_\alpha }^B\|u\|^{A+2\lambda_c}\\
&\leq&{\tt K}_{opt}(1-\varepsilon)(\|\phi\|_{\dot H^1_\alpha }\|\phi\|^{\lambda_c})^B\\
&\leq&(1-\varepsilon)\frac{p+1}B(\|\phi\|_{\dot H^1_\alpha }\|\phi\|^{\lambda_c})^2\\
&\leq&(1-\varepsilon)\mathcal P[\phi]M[\phi]^{\lambda_c}.
\end{eqnarray*}
This finishes the proof.
\end{proof}
\subsection{Proof of the blow-up part in Corollary \ref{t2}}
Assume that \eqref{t11} and \eqref{t13} are satisfied. Let us prove that $\mathcal{GM}[u(t)]>1$ on $[0,T^*)$ where $\mathcal{GM}[u(t)]$ is given by \eqref{M-G}. 
\begin{lem}\label{stb}
The conditions \eqref{t11} and \eqref{t13} are stable under the flow of \eqref{S}.
\end{lem}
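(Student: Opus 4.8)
The plan is to separate the two conditions. The condition \eqref{t11} is preserved for free: the quantity $\mathcal{EM}[u]$ defined in \eqref{M-E} is a product of powers of the energy $E[u]$ and the mass $M[u]$, both conserved along the flow of \eqref{S}. Hence $\mathcal{EM}[u(t)]=\mathcal{EM}[u_0]<1$ for every $t\in[0,T^\ast)$, and there is nothing more to prove for \eqref{t11}. All the work therefore goes into showing that \eqref{t13}, namely $\mathcal{GM}[u_0]>1$, propagates to $\mathcal{GM}[u(t)]>1$ on the whole interval of existence.

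For this I would reuse the real function $g(t):=t^2-\frac{2\mathtt{K}_{opt}}{p+1}t^B$ introduced in the scattering part. Exactly as there, the sharp Gagliardo--Nirenberg inequality \eqref{ineq} together with the identity $A+2\lambda_c=B\lambda_c$ gives, in the focusing regime,
\begin{equation*}
E[u]\,M[u]^{\lambda_c}\geq g\big(\|\nabla_\alpha u\|\,\|u\|^{\lambda_c}\big),
\end{equation*}
while the Pohozaev identities \eqref{poh} show that $g$ attains its global maximum precisely at $t_\phi:=\|\nabla_\alpha\phi\|\,\|\phi\|^{\lambda_c}$, with $g(t_\phi)=E[\phi]\,M[\phi]^{\lambda_c}$. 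Since $B>2$, the map $g$ is strictly increasing on $[0,t_\phi]$ and strictly decreasing on $[t_\phi,\infty)$, so $t_\phi$ is the only point realising the maximal value.

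The barrier argument then runs as follows. Writing $y(t):=\|\nabla_\alpha u(t)\|\,\|u(t)\|^{\lambda_c}$, so that $\mathcal{GM}[u(t)]=y(t)/t_\phi$, the conservation of $\mathcal{EM}$ and the hypothesis \eqref{t11} yield, for every $t$,
\begin{equation*}
g(y(t))\leq E[u(t)]\,M[u(t)]^{\lambda_c}=\mathcal{EM}[u_0]\,E[\phi]\,M[\phi]^{\lambda_c}<E[\phi]\,M[\phi]^{\lambda_c}=g(t_\phi).
\end{equation*}
As $g(t_\phi)$ is the strict maximum, this forces $y(t)\neq t_\phi$, i.e. $\mathcal{GM}[u(t)]\neq 1$ for all $t\in[0,T^\ast)$. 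Now $t\mapsto\|\nabla_\alpha u(t)\|$ is continuous on the connected set $[0,T^\ast)$ by Theorem \ref{loc}, hence $t\mapsto\mathcal{GM}[u(t)]$ is continuous and never crosses the value $1$; starting from $\mathcal{GM}[u_0]>1$ it must remain strictly above $1$. This proves the stability of \eqref{t13} and completes the lemma.

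The step I expect to require the most care is the matching of the two gradients: the invariant $\mathcal{GM}$ in \eqref{M-G} is written with the plain gradient $\nabla$, whereas the energy, the Pohozaev identities \eqref{poh}, and the function $g$ are naturally expressed through the magnetic gradient $\nabla_\alpha$. One must make sure that the quantity controlled by $g$ is exactly the one entering $\mathcal{GM}$ (equivalently, read $\|\nabla\,\cdot\,\|$ as $\|\nabla_\alpha\,\cdot\,\|=\|\,\cdot\,\|_{\dot H^1_\alpha}$, the form used throughout the variational analysis), since in two dimensions the ordinary Hardy inequality fails and $\|\nabla u\|$ and $\|\nabla_\alpha u\|$ are not a priori comparable. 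Once this identification is fixed, the remainder is the soft continuity/connectedness argument above.
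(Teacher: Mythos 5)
Your proof is correct and takes essentially the same route as the paper's: sharp Gagliardo--Nirenberg plus the Pohozaev identities to build the barrier function $g(t)=t^2-\frac{2\mathtt{K}_{opt}}{p+1}t^{B}$, conservation of mass and energy together with \eqref{t11} to keep $g(y(t))$ strictly below its unique maximum $g(t_\phi)=E[\phi]\,M[\phi]^{\lambda_c}$, and a continuity argument to conclude that $y(t)$ never crosses $t_\phi$. You additionally make explicit two points the paper leaves implicit --- the intermediate-value step behind its closing ``continuity argument'' and the reading of $\|\nabla\cdot\|$ in \eqref{M-G} as the magnetic norm $\|\cdot\|_{\dot H^1_\alpha}$ --- and the latter is indeed consistent with how the paper uses \eqref{t13} right after the lemma.
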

\begin{proof}
Taking into account Proposition \ref{gag}, one writes
\begin{eqnarray*}
M[\phi]^{\lambda_c}E[\phi]
&>&(1+\varepsilon)\|u\|^{2\lambda_c}\Big(\|u\|_{\dot H^1_\alpha }^2-\frac2{p+1}\mathcal P[u]\Big)\\
&>&(1+\varepsilon)\|u\|^{2\lambda_c}\Big(\|u\|_{\dot H^1_\alpha }^2-\frac{2{\tt K}_{opt} }{p+1}\|u\|^A\|u\|^B_{\dot H^1_\alpha }\Big)\\
&:=&(1+\varepsilon)f(\|u\|^{2\lambda_c}\|u\|_{\dot H^1_\alpha }).
\end{eqnarray*}
Compute also by Proposition \ref{gag},
\begin{eqnarray*}
f(\|\phi\|^{2\lambda_c}\|\phi\|_{\dot H^1_\alpha })
&=&\|\phi\|^{2\lambda_c}\Big(\|\phi\|_{\dot H^1_\alpha }^2-\frac{2{\tt K}_{opt} }{p+1}\|\phi\|^A\|\phi\|^B_{\dot H^1_\alpha }\Big)\\
&=&\|\phi\|^{2\lambda_c}\Big(\|\phi\|_{\dot H^1_\alpha }^2-\frac{2}{p+1}\mathcal P[\phi]\Big)\\
&=&M[\phi]^{\lambda_c}E[\phi].
\end{eqnarray*}
Thus, $f(\|\phi\|^{2\lambda_c}\|\phi\|_{\dot H^1_\alpha })>(1+\varepsilon)f(\|u\|^{2\lambda_c}\|u\|_{\dot H^1_\alpha })$. The proof follows by a continuity argument.
\end{proof}
By Pohozaev's identity, we get $B\,E[\phi]=(B-2)\|\phi\|_{\dot H^1_\alpha }^2$. Therefore
\begin{eqnarray*}
\mathcal Q[u][M[u]]^{\lambda_c}
&=&\Big(\| u\|_{\dot H^1_\alpha }^2-\frac{B}{p+1}\mathcal P[u]\Big)[M[u]]^{\lambda_c}\\
&=&\frac{B}{2}E[u][M[u]]^{\lambda_c}-(\frac{B}{2}-1)\| u\|_{\dot H^1_\alpha }^2[M[u]]^{\lambda_c}\\
&\leq&\frac{B}{2}(1-\varepsilon)E[\phi][M[\phi]]^{\lambda_c}-(\frac{B}{2}-1)\|\phi\|_{\dot H^1_\alpha }^2[M[\phi]]^{\lambda_c}\\
&\leq&-\varepsilon\|\phi\|_{\dot H^1_\alpha }^2[M[\phi]]^{\lambda_c}.
\end{eqnarray*}
The proof follows by the use of Theorem \ref{t1}.



\end{document}